\newtheorem{lemma}{Lemma}[section]
\newtheorem{corollary}{Corollary}[section]
\newtheorem{theorem}{Theorem}
\newtheorem{proposition}{Proposition}[section]
\newtheorem{definition}{Definition}[section]
\def\dim{{\rm dim}}
\def\supp{{\rm supp}}
\def\<{\langle}
\def\>{\rangle}
\def\to{\rightarrow}
\begin{document}

\title{On the almost eigenvectors of random regular graphs}
\author{{\'Agnes Backhausz ~~ Bal\'azs Szegedy}}

\maketitle

\abstract{Let $d\geq 3$ be fixed and $G$ be a large random $d$-regular graph on $n$ vertices. We show that if $n$ is large enough then the entry distribution of every almost eigenvector $v$ of $G$ (with entry sum 0 and  normalized to have length $\sqrt{n}$) is close to some Gaussian distribution $N(0,\sigma)$ in the weak topology where $0\leq\sigma\leq 1$. Our theorem holds even in the stronger sense when many entries are looked at simultaneously in small random neighborhoods of the graph.  Furthermore, we also get the Gaussianity of the joint distribution of several almost eigenvectors if the corresponding  eigenvalues are close. Our proof uses graph limits and information theory. Our results have consequences for factor of i.i.d.\ processes on the infinite regular tree.

%In particular we obtain that if an invariant eigenvector process on the infinite $d$-regular tree is in the weak closure of factor of i.i.d processes then it has Gaussian distribution. 
\bigskip

\section{Introduction} 

Let $d\geq 3$ and let $G(n,d)$ denote the random $d$-regular graph on $n$ vertices (see e.g.\ the monograph \cite{bollobas}). Equivalently, we can think of $G(n,d)$ as a random model of symmetric $0-1$ matrices in which the row sums are conditioned to be $d$. It is expected that the spectral properties of $G(n,d)$ are closely related to random matrix theory; however, many questions in the area are still open. It is well known that the spectral measure of $G(n,d)$ converges to the so called Kesten--McKay measure in the weak topology as $n$ goes to infinity. This gives an approximate semicircle law if $d$ is large. A famous result by J. Friedman solves Alon's second eigenvalue conjecture showing that $G(n,d)$ is almost Ramanujan \cite{friedman}. Much less is known about the scaled eigenvalue spacing and about the structure of the eigenvectors. Recent results in the area include  \cite{charles, doron} on the second eigenvalue;  \cite{bhky, bauer,  elliot, pal} on eigenvalue spacing, local semicircle law and functional limit theorems; \cite{nalini, brooks, geisinger} on the delocalization of the eigenvectors.

In the present paper we study approximate eigenvectors or shortly: \emph{almost eigenvectors} of $G(n,d)$ i.e.\ vectors that satisfy the eigenvector equation $(A-\lambda I)v=0$ with some small error. Almost eigenvectors are not necessarily close to proper eigenvectors. They are much more general objects. For example any linear combination of eigenvectors with eigenvalues in the interval $[\lambda-\varepsilon,\lambda+\varepsilon]$ is an almost eigenvector with error depending on $\varepsilon$. In general a vector is an almost eigenvector if and only if its spectral measure is close to a Dirac measure in the weak topology. 

We show that despite of this generality, almost eigenvectors of $G(n,d)$ have a quite rigid structure if $n$ is big. Our main result implies that every almost eigenvector (with entry sum 0 and  normalized to have length $\sqrt{n}$) has an entry distribution close to some Gaussian distribution $N(0,\sigma)$ in the weak topology where $0\leq\sigma\leq 1$.  Note that if $\sigma=0$, then the $\ell^2$-weight of the vector is concentrated on a small fraction of the vertices. Such vectors are called localized.  Our main result holds even in a stronger sense where joint distributions are considered using the local structure of the graph. In some sense our result is best possible since there are examples for both localized and delocalized almost eigenvectors (see chapter \ref{prelim}). Note that proper eigenvectors are conjectured to be delocalized.

The issue of eigenvector Gaussianity goes back to random matrix theory. It is not hard to show that in the GUE (Gaussian Unitary Ensemble) random matrix model every eigenvector has a near Gaussian entry distribution. It is much harder to analyze the random model when the elements of the matrix are chosen from a non Gaussian distribution. Nevertheless Gaussianity of the eigenvectors is proved under various conditions for generalized Wigner matrices \cite{bouryau, taovu} and also for various other models (see e.g.\ \cite{bloemen} and \cite{antti, surveyvu} for recent surveys). Sparser models are harder to analyze \cite{huang}. This paper deals with the sparsest case, where the matrix is the adjacency matrix of a random $d$-regular graph with some fixed $d$. In this case there is a stronger meaning of eigenvector Gaussianity. For example it is natural to ask about the Gaussianity of the joint distribution of the entries at the two endpoints of a randomly chosen edge in the graph.  More generally one can look at the joint distribution of the entries in random balls of radius $r$. Our Gaussianity results for almost eigenvectors are established in this strong sense.  Furthermore, it makes sense to study the joint distribution of the entries of several almost eigenvectors. More precisely, a $k$-tuple of almost eigenvectors can be interpreted as a function from the vertices to $\mathbb R^k$. This function evaluated at a randomly chosen vertex gives a probability distribution on $\mathbb R^k$ that we define as the joint distribution of the entries. It is an interesting question whether such joint distributions are Gaussian. We prove this if $k$ is fixed, $n$ is large and the eigenvalues corresponding to the almost eigenvectors are close to each other. Moreover, we also prove this result when the joint distribution of many eigenvectors is considered in random neighborhoods.

The proof of our main theorem is based on the so-called local-global graph limits \cite{BR, HLSz}. However, to keep the paper self-contained, we use a slightly simplified framework (see section \ref{typpro}) optimized for this particular problem. We relate the properties of random regular graphs to random processes on the infinite $d$-regular tree $T_d$. 
Most of the work is done in this convenient limiting framework. 
An invariant process on $T_d$ is a joint distribution of random variables $\{X_v\}_{v\in T_d}$ labeled by the vertices of $T_d$ such that it is invariant under the automorphism group of $T_d$. A special class of invariant processes, called {\it typical processes}, was introduced in \cite{BSz}. Roughly speaking, an invariant process is typical if it can be obtained as the Benjamini--Schramm limit of colored random regular graphs. There is a correspondence between the properties of typical processes on $T_d$ and the properties of large random $d$-regular graphs.

Our main theorem is equivalent to the statement that if a typical process satisfies the eigenvector equation at every vertex of $T_d$ and has finite variance (at every vertex), then the process is jointly Gaussian. Note that such Gaussian eigenvector processes on $T_d$ are completely characterized and there is a unique one for each possible eigenvalue. A key ingredient in our proof is a general entropy inequality for typical processes. It implies that typical eigenvector processes obey another inequality involving differential entropy. From here we finish the proof using heat propagation on the space of typical eigenvector processes combined with DeBrujin's identity for Fisher information. Gaussianity will follow from the fact that heat propagation converges to a Gaussian distribution. 

A well studied subclass of typical processes is the class of factor of i.i.d.\ processes. These processes appeared first in ergodic theory but they are also relevant in probability theory, combinatorics, statistical physics and in computer science. Not every typical process is factor of i.i.d.; this follows from the results of Gamarnik and Sudan \cite{gamarnik}; see also Rahman and Vir\'ag \cite{mustazee}.
Despite of recent progress in the area \cite{cordec, balint, lewis, csoka, robin, damien, harangi, HLSz, russ, nazarov, fiidperc}, a satisfying understanding of factor of i.i.d.\ processes is only available in the case $d=2$ \cite{ornstein}, which is basically equivalent to the framework of classical ergodic theory of $\mathbb{Z}$ actions. Our results imply that if an invariant process (with finite variance) is in the weak closure of factor of i.i.d. processes and satisfies the eigenvector equation then the process is Gaussian. This answers a question of B. Vir\'ag.

{\bf Outline of the paper.} In Chapter \ref{chap:main}, we formulate the main results for finite random $d$-regular graphs. Chapter \ref{prelim} and \ref{chap:eigen} contain general statements about invariant processes, eigenvector processes, entropy and almost eigenvectors. Chapter \ref{typpro} provides the translation of our main result to the infinite setting using typical eigenvector processes on the infinite $d$-regular tree.  In Chapter \ref{chap:entr}, we prove a necessary condition  for a process to be typical in the form of an entropy inequality.  In Chapter \ref{smooth} we reduce the limiting form of the main theorem to a special family of eigenvector processes called smooth eigenvector processes. Chapter \ref{chap:entrtyp} gives a differential entropy inequality for smooth eigenvector processes. In Chapter \ref{chap:cov}, we calculate the eigenvalues of special submatrices of the covariance matrices of eigenvector processes corresponding to balls around vertices and edges on the tree. In Chapter \ref{chap:impr}, we use the results from Chapter \ref{chap:cov} to prove that among smooth typical eigenvector processes the Gaussian minimizes  the differential entropy formula from Chapter \ref{chap:entrtyp}.  On the other hand, in Chapter \ref{chap:heat}, we show that the Gaussian eigenvector process maximizes the same formula. Moreover, we finish the proof of the main result in Chapter \ref{chap:heat}.

\section{The main theorem}

\label{chap:main}
In this chapter we state the main theorem first in a simpler but weaker form and later in the strong form. 
An $\varepsilon$-almost eigenvector of a matrix $A\in\mathbb{R}^{n\times n}$ with eigenvalue $\lambda$ is a vector $v\in\mathbb{R}^n$ such that $\|v\|_2=1$ and $\|Av-\lambda v\|_2\leq\varepsilon$. 
To every vector $v$ in $\mathbb{R}^n$ we associate a probability distribution ${\rm distr}(v)$ on $\mathbb{R}$ obtained by choosing a uniform random entry from $v$. If $\|v\|_2=1$, then the second moment of ${\rm distr}(v)$ is $1/n$. Thus, to avoid degeneracy in this case, it is more natural to consider ${\rm distr}(\sqrt{n}v)$ whose second moment is $1$.  
We will compare probability distributions on $\mathbb{R}$ using an arbitrary but fixed metrization of the weak convergence of probability measures. Our theorem in a weak form says the following

\begin{theorem}[Weak form of main theorem] For every $\varepsilon>0$ there are constants $N, \delta$ such that if $G$ is a random $d$-regular graph on $n\geq N$ vertices, then with probability at least $1-\varepsilon$ the following holds. We have that every $\delta$-almost eigenvector $v$ of $G$ (with entry sum $0$) has the property that ${\rm distr}(\sqrt{n}v)$ is at most $\varepsilon$-far from some Gaussian distribution $N(0,\sigma)$ in the weak topology where $0\leq\sigma\leq 1$.
\end{theorem}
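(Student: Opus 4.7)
\proof (Proof plan.) My approach will follow the road map indicated in the introduction, translating the finite statement to a limiting statement and then exploiting entropy inequalities together with heat propagation.

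First I would pass from the finite random graph setting to an infinite one. Suppose towards contradiction the theorem fails. Then there is an $\varepsilon_0>0$, a sequence $\delta_n\to 0$, a sequence of (increasingly large) $d$-regular graphs $G_n$ on $n$ vertices, and $\delta_n$-almost eigenvectors $v_n$ of $G_n$ with entry sum $0$ and $\|v_n\|_2=1$ such that ${\rm distr}(\sqrt{n}v_n)$ stays $\varepsilon_0$-far from every Gaussian $N(0,\sigma)$ with $\sigma\in[0,1]$. Viewing $(G_n,\sqrt{n}v_n)$ as a vertex-labeled graph, pass to a Benjamini--Schramm subsequential local-global limit. The second-moment normalization together with the fact that $G_n$ is $d$-regular ensures that sampling the label at a uniformly random vertex is tight, so the limit is a typical invariant process $\mathbf{X}=\{X_v\}_{v\in T_d}$ on the infinite $d$-regular tree with $\mathbb{E}X_o=0$, $\mathbb{E}X_o^2\leq 1$, and the eigenvector identity $\sum_{u\sim v}X_u=\lambda X_v$ holding almost surely for some $\lambda\in[-d,d]$. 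Thus it suffices to prove the limiting statement: every typical eigenvector process on $T_d$ with finite variance is jointly Gaussian (which for each fixed $\lambda$ determines $\mathbf{X}$ up to the scalar $\sigma$, by the uniqueness mentioned in the introduction).

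Second, I would reduce to a \emph{smooth} typical eigenvector process in the sense promised in Chapter~\ref{smooth}: convolve $\mathbf{X}$ with an independent centered Gaussian factor of small variance $t>0$, so that the relevant joint densities on finite balls exist and Fisher information and differential entropy are well defined. Smoothness is preserved under Benjamini--Schramm limits, so the smoothed process $\mathbf{X}_t$ is still a typical eigenvector process (with a rescaled eigenvalue). It is enough to show each $\mathbf{X}_t$ is Gaussian and then let $t\to 0$.

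Third, the core of the argument is a pair of matching bounds on a single differential-entropy functional $F(\mathbf{X}_t)$ built from densities on balls and edges of $T_d$. The entropy inequality for typical processes established in Chapter~\ref{chap:entr} specializes (via the covariance-matrix eigenvalue computations of Chapter~\ref{chap:cov}) to a lower bound $F(\mathbf{X}_t)\geq F(\mathbf{G}_t)$, where $\mathbf{G}_t$ is the Gaussian typical eigenvector process with the same second-order statistics as $\mathbf{X}_t$. This is the ``Gaussian minimizes'' half of the argument, corresponding to Chapter~\ref{chap:impr}. For the matching upper bound $F(\mathbf{X}_t)\leq F(\mathbf{G}_t)$, I would run heat propagation: let $\mathbf{X}_{t,s}$ be $\mathbf{X}_t$ plus independent $N(0,s)$ noise at every vertex. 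By de~Bruijn's identity, $\tfrac{d}{ds}h(\mathbf{X}_{t,s}|_{B_r})$ equals half the Fisher information of the ball, and a careful tracking along $s\to\infty$ using the convexity of Fisher information and the fact that the process tends to a pure Gaussian shows $F$ is maximized at the Gaussian. Combining the two inequalities forces equality in the entropy power inequality used in Chapter~\ref{chap:entr}, which happens only when $\mathbf{X}_t$ itself is Gaussian. Taking $t\to 0$ gives that $\mathbf{X}$ is a Gaussian eigenvector process, hence $N(0,\sigma)$ at each vertex with $\sigma\leq 1$, contradicting the initial assumption.

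I expect the main obstacle to be the entropy inequality for typical processes and its consequences for eigenvector processes: one needs precise information about how the differential entropy of the restriction of $\mathbf{X}_t$ to a ball of radius $r$ compares to that on a smaller ball and on the edge-neighborhoods, so that typicality (coming from the fact that $\mathbf{X}_t$ is a local-global limit of colored finite regular graphs) gives a subadditivity/Shearer-type bound sharp enough to match the heat-flow upper bound. The computation of eigenvalues of the covariance submatrices corresponding to balls around vertices versus edges (Chapter~\ref{chap:cov}) is the technical heart that makes the two bounds line up; this is where the tree structure and the eigenvalue $\lambda$ enter explicitly.
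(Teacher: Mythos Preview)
Your high-level outline matches the paper's strategy closely: reduce to a limiting statement about typical eigenvector processes on $T_d$, smooth, then squeeze a differential-entropy functional between a lower bound coming from typicality and an upper bound coming from heat flow, with the Gaussian wave as the unique point of equality.

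There is, however, one genuine technical confusion in your plan that would cause the argument to break. In your smoothing step you propose to ``convolve $\mathbf{X}$ with an independent centered Gaussian factor of small variance $t>0$'' and later to ``let $\mathbf{X}_{t,s}$ be $\mathbf{X}_t$ plus independent $N(0,s)$ noise at every vertex,'' claiming the result is still an eigenvector process ``with a rescaled eigenvalue.'' This is false: adding i.i.d.\ Gaussian noise vertex-by-vertex destroys the eigenvector equation $\sum_{u\sim v}X_u=\lambda X_v$ (and no rescaling of $\lambda$ repairs it). The paper instead adds an independent copy of the \emph{Gaussian wave} $\Psi_\lambda$ with the same eigenvalue (Proposition~\ref{prop:masodik}); this is itself an eigenvector process, so the sum remains one, and Propositions~\ref{prop:elso} and~\ref{prop:hv} show the sum is still typical. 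Similarly, the heat propagation in Chapter~\ref{chap:heat} is carried out not by adding i.i.d.\ noise to the tree but by adding $\sqrt{2t}\,F^*$ on the finite-dimensional star distribution, where $F^*$ is the restriction of $\Psi_\lambda$ to $C=B_1(o)$; the resulting family stays in the constraint set $\mathcal{F}_{d,\lambda}$ after rescaling. The de~Bruijn calculation is then done inside $W_\lambda(C)$, and the key geometric identity (Lemma~\ref{wr}) for the vector system $\{a_i,b_i\}$ is what makes $\Lambda_F'(0)\ge 0$ come out, not a generic Fisher-information convexity argument over balls of radius $r$.

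A smaller point: the passage from finite graphs to a \emph{typical} process needs more than Benjamini--Schramm compactness. You must know that the limit can be approximated on a sequence of random $d$-regular graphs with probability one; the paper secures this via a concentration-of-measure argument for the local-global metric $m^*$ (Lemmas~\ref{lem:lip} and~\ref{lem:typgraph}) before extracting the limit. Finally, the equality case is not an ``entropy power inequality'' but equality in the conditional-independence inequality of Lemma~\ref{diffincform}, which forces the $2$-Markov property (Lemma~\ref{imprlem2}) and hence that the process is determined by its star marginal, which has already been shown Gaussian.
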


Note that if ${\rm distr}(\sqrt{n}v)$ is close to the degenerate distribution $N(0,0)$ then most of the $\ell^2$ weight of $v$ is concentrated on $o(n)$ points. Such vectors are called localized. In general, if $\sigma$ is smaller than, $1$ then some of the $\ell^2$ weight is concentrated on $o(n)$ vertices and the rest is Gaussian.

To formulate our main theorem in the strong form we need some more notation. Recall that $T_d$ denotes the infinite $d$-regular tree and $o$ is a distinguished vertex called ${\it root}$ in $T_d$. We will denote the vertex set $V(T_d)$ of $T_d$ by $V_d$.  For two vertices in a graph we write $v\sim w$ if they are connected to each other. Let $[n]$ denote the set $\{1,2,\dots,n\}$ and let $G$ be a $d$-regular graph on the vertex set $[n]$. We denote by ${\rm Hom}^*(T_d,G)$ the set of all covering maps from $T_d$ to $G$. In other words ${\rm Hom}^*(T_d,G)$ is the set of maps $\phi:V_d\to V(G)$ such that for every vertex $v\in V_d$ the neighbors of $v$ are mapped bijectively to the neighbors of $\phi(v)$. 
The set ${\rm Hom}^*(T_d,G)$ has a natural probability measure.  We first choose the image of $o$ uniformly at random in $V(G)$. Then we recursively extend the map $\phi$ to larger and larger neighborhoods of $o$ in a random (conditionally independent) way preserving the local bijectivity. It is easy to see that this probability distribution is independent from the choice of $o$.   

Let $X$ be a topological space and $f:[n]\to X$ be a function. We define the probability distribution ${\rm distr}^*(f,G)$ on $X^{V_d}$ as the distribution $f\circ \phi$ where $\phi$ is a random covering in ${\rm Hom}^*(T_d,G)$.
In other words ${\rm distr}^*(f,G)$ is a random lift of $f$ to $T_d$ using a random covering of $G$ with $T_d$. By regarding vectors $v\in\mathbb{R}^n$ as functions form $[n]$ to $\mathbb{R}$ it makes sense to use ${\rm distr}^*(v,G)$.

To formulate our main theorem we need the concepts of eigenvector processes and Gaussian waves on $T_d$ (see \cite{wave}). An {\em eigenvector process} with eigenvalue $\lambda$ is a joint distribution  $\{X_v\}_{v\in V_d}$ of real valued random variables with variance 1 such that it is ${\rm Aut}(T_d)$ invariant and satisfies the eigenvector equation 
\begin{equation}\label{eq:sv}\sum_{v\sim o} X_v=\lambda X_o\end{equation} with probability $1$. 
Note that the group invariance implies that the eigenvector equation is satisfied at every vertex on $T_d$. 
We call an eigenvector process {\em trivial} if $\mathbb E(X_o)\neq 0$. Notice that the triviality of an eigenvector process implies that $\lambda=d$. This follows by taking expectation in \eqref{eq:sv} and using the invariance of the process. Furthermore, trivial eigenvector processes are constants in the sense that $X_v=X_w$ holds with probability one for every pair of vertices $v,w$ in $V_d$.
A {\em Gaussian wave} is an eigenvector process whose joint distribution is Gaussian.  It is proved in \cite{wave} that  for every $-d\leq\lambda\leq d$ there is a unique Gaussian wave $\Psi_\lambda$ with eigenvalue $\lambda$.

Let us choose a fix metrization of the weak topology on $\mathbb{R}^{V_d}$. Our main theorem on random $d$-regular graphs is the following.

\begin{theorem}[Main theorem]\label{main} For every $\varepsilon>0$ there exist  constants $N, \delta$ such that if $G$ is a random $d$-regular graph on $n\geq N$ vertices then with probability at least $1-\varepsilon$ the following holds. For every $\delta$-almost eigenvector $v$ of $G$ (with entry sum $0$) has the property that ${\rm distr}^*(\sqrt{n}v,G)$ is at most $\varepsilon$-far (in the weak topology) from some Gaussian wave $\Psi_\lambda$ with $|\lambda|\leq 2\sqrt{d-1}$.
\end{theorem}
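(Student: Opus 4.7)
The plan is to pass from the finite random graph setting to an infinite-dimensional statement about invariant processes on $T_d$, where the rigidity of the problem is clearer. Via the local-global limit framework developed in Chapter \ref{typpro}, the theorem is equivalent to the following limiting statement: every \emph{typical eigenvector process} on $T_d$ with finite variance (i.e.\ every weak limit of distributions of the form ${\rm distr}^*(\sqrt{n}v,G)$ for almost eigenvectors $v$) must be a Gaussian wave $\Psi_\lambda$. The bound $|\lambda|\leq 2\sqrt{d-1}$ would then follow from the fact that any non-trivial eigenvector process with finite variance forces the eigenvalue to lie in the $L^2$-spectrum of $T_d$, which is exactly $[-2\sqrt{d-1},2\sqrt{d-1}]$. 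So the whole game reduces to proving Gaussianity of typical eigenvector processes on the tree, plus this spectral support statement.

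To prove Gaussianity I would follow the roadmap indicated in the outline. First, I would establish a general \emph{entropy inequality} that any typical process must satisfy (Chapter \ref{chap:entr}): intuitively, because a typical process arises from uniformly sampling a covering map of a finite regular graph, counting arguments on balls of radius $r$ bound the Shannon entropy of the restriction to a ball in terms of entropies on edges and vertices, reflecting the fact that the $d$-regular tree has ``more edges than vertices'' in a precise asymptotic sense. After convolving the process with an independent Gaussian of small variance $t$ (the smoothing step, Chapter \ref{smooth}) one can reduce to smooth typical eigenvector processes and transfer the discrete inequality to a \emph{differential} entropy inequality (Chapter \ref{chap:entrtyp}) of the form: a certain linear combination of differential entropies of the process on balls and edges around the root is nonpositive.

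The next step is the variational argument. Using the spectral computation of the covariance matrix of an eigenvector process restricted to a ball (Chapter \ref{chap:cov}), I would show that for a \emph{fixed} covariance structure on these balls the Gaussian process \emph{minimizes} the differential entropy combination appearing in the inequality (Chapter \ref{chap:impr}); combined with the inequality itself, this pins the functional value of any smooth typical eigenvector process above that of $\Psi_\lambda$ convolved with Gaussian noise. Then (Chapter \ref{chap:heat}) I would run heat propagation on the space of typical eigenvector processes: adding an independent scaled Gaussian wave preserves the typical and eigenvector properties, and DeBrujin's identity relates the $t$-derivative of differential entropy to Fisher information, showing that the same functional is \emph{maximized} by the Gaussian wave. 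The two matching bounds force the differential entropies of the smoothed process and of the smoothed Gaussian wave to agree on every ball, which by the equality case of the Gaussian maximum-entropy principle identifies the smoothed process with $\Psi_\lambda$; sending $t\to 0$ then identifies the original process with $\Psi_\lambda$.

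The main obstacle, I expect, is upgrading the combinatorial entropy inequality for typical processes to the clean differential entropy inequality on which the whole variational argument rests: one must carry the smoothing through the eigenvector equation, keep track of the dimensional/covariance corrections that appear when discrete entropies are turned into differential ones, and verify that the error terms from truncation and from the $t\to 0$ limit are harmless. Once that inequality is in hand the sandwich between the minimization bound and the heat-flow maximization bound is essentially forced, and the uniqueness of $\Psi_\lambda$ proved in \cite{wave} closes the argument, after which unwinding the local-global limit returns the finite statement claimed in Theorem~\ref{main}.
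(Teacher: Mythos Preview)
Your roadmap matches the paper's architecture closely: reduce to typical eigenvector processes on $T_d$ (Chapter~\ref{typpro}), establish the discrete entropy inequality (Chapter~\ref{chap:entr}), smooth and pass to the differential entropy inequality (Chapters~\ref{smooth}--\ref{chap:entrtyp}), compute the Gaussian value via the covariance spectrum (Chapter~\ref{chap:cov}), and sandwich via heat flow (Chapters~\ref{chap:impr}--\ref{chap:heat}). Two points, however, are not right as stated.

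First, the bound $|\lambda|\le 2\sqrt{d-1}$ does \emph{not} follow from ``any non-trivial eigenvector process with finite variance has eigenvalue in the $L^2$-spectrum of $T_d$''. Gaussian waves $\Psi_\lambda$ exist for all $|\lambda|\le d$, so invariant eigenvector processes with finite variance exist outside $[-2\sqrt{d-1},2\sqrt{d-1}]$. The restriction comes from \emph{typicality}: the paper (Lemma~\ref{lem:friedman}) pushes the process back to almost eigenvectors on finite random regular graphs and invokes Friedman's theorem on $\lambda_2$.

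Second, and more seriously, your endgame does not close. The functional being sandwiched is $\mathcal D(F)=\mathbb D_{\rm sp}(C)-\tfrac{d}{2}\mathbb D_{\rm sp}(e)$, a \emph{difference} of entropies. The two bounds give $\mathcal D(F)=\mathcal D(F^*)$, i.e.\ $\alpha(C,\nu)-\tfrac{d}{2}\alpha(e,\nu)=0$, but this does \emph{not} force $\alpha(C,\nu)=\alpha(e,\nu)=0$ separately, so you cannot invoke the equality case of the Gaussian maximum-entropy principle ball by ball. The paper extracts Gaussianity from the equality cases of the two bounds themselves: equality in Theorem~\ref{gausscond} is analyzed via the de~Bruijn identity and a conditional-independence rigidity argument (Lemmas~\ref{twoposs} and~\ref{indepgauss}) to conclude that the \emph{star} marginal $F$ is Gaussian; equality in Theorem~\ref{thm:imprmain} forces the process to be $2$-Markov. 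These two facts together---Gaussian on $C$ and $2$-Markov---are what propagate Gaussianity to the whole tree. The $2$-Markov ingredient is missing from your sketch and is essential.
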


\begin{corollary} For every $\varepsilon>0$ and $k\in \mathbb N$ there exist  constants $N, \delta$ such that if $G$ is a random $d$-regular graph on $n\geq N$ vertices then with probability at least $1-\varepsilon$ the following holds. For every $k$-tuple of $\delta$-almost eigenvectors $Q=(v^{(1)}, v^{(2)}, \ldots, v^{(k)})$ of $G$ (with entry sum $0$) corresponding to eigenvalues $\lambda_1, \ldots, \lambda_k$ with the property $|\lambda_i-\lambda_j|<\delta$ we have that ${\rm distr}^*(\sqrt{n}Q, G)$ is at most $\varepsilon$-far (in the weak topology) from some Gaussian wave $\Psi_\lambda$ with $|\lambda|\leq 2\sqrt{d-1}$.

\end{corollary}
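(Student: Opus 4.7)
The plan is to reduce the corollary to Theorem \ref{main} via a Cram\'er--Wold style argument applied to one-dimensional projections. Fix the $k$-tuple $Q=(v^{(1)},\ldots,v^{(k)})$ and write $\bar\lambda=\lambda_1$. For any unit vector $\alpha\in\mathbb{R}^k$ set $w_\alpha=\sum_{i}\alpha_i v^{(i)}$; by the triangle inequality and Cauchy--Schwarz,
$$
\|Aw_\alpha-\bar\lambda w_\alpha\|_2\ \le\ \sum_{i}|\alpha_i|\bigl(\|Av^{(i)}-\lambda_i v^{(i)}\|_2+|\lambda_i-\bar\lambda|\bigr)\ \le\ 2\delta\sqrt{k}.
$$
Fix a threshold $\eta=\eta(\varepsilon,k)$. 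If $\|w_\alpha\|_2<\eta$ then $\sqrt{n}\, w_\alpha$ is $L^2$-small, so ${\rm distr}^*(\sqrt{n}\, w_\alpha,G)$ is weakly close to the Dirac mass at $0$, which we identify with the degenerate Gaussian. Otherwise, $\tilde w_\alpha=w_\alpha/\|w_\alpha\|_2$ is a $(2\delta\sqrt{k}/\eta)$-almost eigenvector of $G$ with entry sum $0$; provided $\delta$ is chosen small enough against the precision demanded by Theorem \ref{main}, the theorem produces a Gaussian wave $\Psi_{\lambda(\alpha)}$ with $|\lambda(\alpha)|\le 2\sqrt{d-1}$ close in the weak topology to ${\rm distr}^*(\sqrt{n}\,\tilde w_\alpha,G)$. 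Since the spectral measure of $\tilde w_\alpha$ is concentrated near $\bar\lambda$, we also get $|\lambda(\alpha)-\bar\lambda|=o(1)$ as $\delta\to 0$, so a single common eigenvalue $\lambda$ will appear in the limit for all $\alpha$.

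The second step is to assemble this one-dimensional information into joint Gaussianity, which I would do by compactness and contradiction. If the corollary fails for some $\varepsilon_0>0$, take a sequence $G_n$ on $n\to\infty$ vertices together with bad $k$-tuples $Q_n$, and extract a subsequence along which ${\rm distr}^*(\sqrt{n}\, Q_n,G_n)$ converges in the weak topology, restricted to every finite ball of $T_d$, to an $\mathrm{Aut}(T_d)$-invariant limit $\mu$ valued in $\mathbb{R}^k$. The first step shows that for every unit $\alpha$ the push-forward of $\mu$ under the linear functional $x\mapsto\langle\alpha,x\rangle$ is a one-dimensional Gaussian wave $\Psi_\lambda$ with a common $\lambda$ coming from $|\lambda_i-\lambda_j|<\delta$. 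The classical Cram\'er--Wold theorem applied on each finite subtree of $T_d$ then forces $\mu$ to be jointly Gaussian. Invariance and the eigenvector equation in each coordinate pass to the weak limit, so $\mu$ is an $\mathbb{R}^k$-valued invariant Gaussian eigenvector process with eigenvalue $\lambda$, of exactly the form asserted by the corollary, contradicting $\mu$ being bounded away from every such process.

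The main obstacle I anticipate is the coupling of quantifiers: the error in the almost-eigenvector property of $\tilde w_\alpha$ blows up like $1/\eta$, while the contribution from the small-norm case is of order $\eta$, so one must pick $\eta$ first in terms of $\varepsilon$ and only then $\delta$ in terms of $(\varepsilon,\eta,k)$, and the precision required from Theorem \ref{main} must be uniform over the unit sphere of $\mathbb{R}^k$ (handled by a finite net argument, since the sphere is compact). A secondary subtlety is the stability of $\Psi_\lambda$ in $\lambda$: Theorem \ref{main} only determines $\lambda(\alpha)$ up to weak-topology precision, so one must appeal to continuous dependence of the Gaussian wave on its eigenvalue (as established in \cite{wave}) to conclude that all $\lambda(\alpha)$ converge to a single $\lambda$ in the limit.
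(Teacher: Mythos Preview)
The paper does not prove this corollary; it is stated immediately after Theorem~\ref{main} without argument, and the only pointer is the remark following Corollary~\ref{mainlimcorr} that the scalar result ``implies'' joint Gaussianity of coupled eigenvector processes. Your Cram\'er--Wold reduction to one-dimensional projections $w_\alpha$ is therefore precisely the line the authors have in mind, and your handling of the quantifiers (choosing the threshold $\eta$ before $\delta$, the finite net on $S^{k-1}$, continuity of $\lambda\mapsto\Psi_\lambda$) is the right way to organise it.

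There is, however, a genuine gap at the sentence ``the classical Cram\'er--Wold theorem applied on each finite subtree of $T_d$ then forces $\mu$ to be jointly Gaussian.'' What your first step establishes is that for each fixed $\alpha\in\mathbb{R}^k$ the scalar process $\langle\alpha,X_\cdot\rangle$ is a scaled Gaussian wave; hence for each finite $F\subset V_d$ and each $\gamma\in\mathbb{R}^F$ the real random variable $\sum_{v\in F}\gamma_v\langle\alpha,X_v\rangle$ is Gaussian. This covers only those linear functionals on $(\mathbb{R}^k)^F$ of the rank-one form $\beta_{v,j}=\gamma_v\alpha_j$. Cram\'er--Wold on $(\mathbb{R}^k)^F$ requires Gaussianity of $\sum_{v,j}\beta_{v,j}X_{v,j}$ for \emph{every} $\beta$, and for $|F|\ge 2$, $k\ge 2$ the rank-one matrices form a proper subvariety of $\mathbb{R}^{|F|\times k}$, so joint Gaussianity of $(X_v)_{v\in F}$ does not follow from the classical device as you have invoked it. The paper's one-line remark glosses over exactly the same point.

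The cleanest repair is to bypass Cram\'er--Wold. Your contradiction/compactness argument already produces a \emph{typical} $\mathbb{R}^k$-valued eigenvector process $\mu$ (the reduction in Proposition~\ref{lem:equiv} goes through verbatim with $\mathbb{R}$ replaced by $\mathbb{R}^k$), and the entire machinery of Chapters~\ref{chap:entr}--\ref{chap:heat} extends to $\mathbb{R}^k$-valued eigenvector processes with only notational changes: the entropy inequality of Theorem~\ref{discmain}, the differential-entropy functional $\mathcal{D}$ on the star, and the de~Bruijn/heat-flow argument of Theorem~\ref{gausscond} all work when the star variable $F=(X_1,\dots,X_d,Z)$ lives in $(\mathbb{R}^k)^{d+1}$ rather than $\mathbb{R}^{d+1}$ (Lemma~\ref{wr} is purely about the geometry of the vectors $v_i,w$ and is applied coordinatewise in $\mathbb{R}^k$). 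This yields joint Gaussianity of $\mu|_C$ directly, and the $2$-Markov conclusion of Theorem~\ref{thm:imprmain} then propagates it to all of $T_d$.
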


\section{Preliminaries}
\label{prelim}

\subsubsection*{Invariant processes}

For a separable metric space $Y$ we denote by $I_d(Y)$ the set of Borel probability measures on $Y^{V_d}$ that are invariant under the automorphisms of the tree. More precisely, for every $\tau\in \mathrm{Aut}(T_d)$ (not necessarily fixing the root), the probability measure on $Y^{V_d}$ is required to be invariant under the natural $Y^{V_d}\rightarrow Y^{V_d}$ map induced by $\tau$. Note that eigenvector processes are in $I_d(\mathbb{R})$. If $\mu\in I_d(Y)$ and $F\subseteq V_d$, then we denote by $\mu_F$ the marginal distribution of $\mu$ at $F$. We can equivalently think of $\mu\in I_d(Y)$ as a joint distribution $\{X_v\}_{v\in V_d}$ of $Y$-valued random variables that is invariant under the automorphism group of $T_d$. In this language $\mu_F$ is the same as the joint distribution $\{X_v\}_{v\in F}$. If both $F$ and $Y$ are finite then $\mu_F$ is a probability distribution on the finite set $Y^F$. In this case we denote the entropy of $\mu_F$ by $\mathbb{H}(F)$. By invariance of $\mu$ the quantity $\mathbb{H}(F)$ depends only on the isomorphism class of $F$. 

%For a vertex set $F\subseteq V(T_d)$ we denote by $B_k(F)$ the neighborhood of radius $k$ around $F$. We will use $C$ to denote the star $B_1(o)$ and we will use $e$ to denote a distinguished edge in $T_d$.

We will consider convergence in $I_d(Y)$ with respect to the weak topology. Since the weak topology is metrizable we can always choose a fixed metrization of it in advance.

\subsubsection*{Almost eigenvectors}

Let $A\in \mathbb R^{n\times n}$ be a matrix. An $\varepsilon$-almost eigenvector of $A$ (with eigenvalue $\lambda$) is a vector $v\in \mathbb R^n$ such that $\|v\|_2=1$ and $\|Av-\lambda v\|_2\leq \varepsilon$.

\begin{lemma} \label{lem:almost}Let $A$ be the adjacency matrix of a $d$-regular graph on $n$ vertices. Let $\lambda_2(A)$ denote the second largest (in absolute value) eigenvalue of $A$. Let $v$ be an $\varepsilon$-almost eigenvector with eigenvalue $\lambda$ such that the entry sum of $v$ is $0$. Then $|\lambda|\leq \lambda_2(A)+\varepsilon$.
\end{lemma}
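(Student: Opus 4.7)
The plan is to decompose $v$ in an orthonormal eigenbasis of $A$ and exploit the hypothesis that $v$ is orthogonal to the trivial eigendirection.

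First I would recall that since $A$ is symmetric, it admits an orthonormal eigenbasis $u_0, u_1, \ldots, u_{n-1}$ with real eigenvalues $\mu_0, \mu_1, \ldots, \mu_{n-1}$. Because $G$ is $d$-regular, the all-ones vector (suitably normalized) is an eigenvector with eigenvalue $d$; take this to be $u_0$ with $\mu_0 = d$. By the definition of $\lambda_2(A)$, we then have $|\mu_i| \leq \lambda_2(A)$ for every $i \geq 1$.

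Next, the condition that the entry sum of $v$ vanishes is exactly the condition that $v \perp u_0$. So when we expand $v = \sum_{i\geq 0} c_i u_i$, the coefficient $c_0$ is zero, and $\|v\|_2^2 = \sum_{i\geq 1} c_i^2 = 1$. Now compute $Av - \lambda v = \sum_{i\geq 1} c_i (\mu_i - \lambda) u_i$, and by Pythagoras
\begin{equation*}
\varepsilon^2 \;\geq\; \|Av - \lambda v\|_2^2 \;=\; \sum_{i\geq 1} c_i^2 (\mu_i - \lambda)^2.
\end{equation*}

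Finally I would argue by contradiction: suppose $|\lambda| > \lambda_2(A) + \varepsilon$. Then for each $i \geq 1$,
\begin{equation*}
|\mu_i - \lambda| \;\geq\; |\lambda| - |\mu_i| \;\geq\; |\lambda| - \lambda_2(A) \;>\; \varepsilon,
\end{equation*}
so $(\mu_i - \lambda)^2 > \varepsilon^2$ for every $i \geq 1$. Plugging this into the displayed inequality gives $\varepsilon^2 \geq \varepsilon^2 \sum_{i\geq 1} c_i^2 = \varepsilon^2$, which is strict and hence a contradiction. Therefore $|\lambda| \leq \lambda_2(A) + \varepsilon$, as required.

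There is no real obstacle here; the lemma is a one-line spectral computation once one notices that having entry sum zero is equivalent to orthogonality to the Perron eigenvector, which then confines the spectral content of $v$ to eigenvalues of absolute value at most $\lambda_2(A)$.
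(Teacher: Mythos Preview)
Your proof is correct and follows essentially the same approach as the paper: expand $v$ in an orthonormal eigenbasis of $A$, use the zero entry-sum to drop the constant eigenvector, and bound $\|Av-\lambda v\|_2^2 = \sum c_i^2(\mu_i-\lambda)^2$ from below via $|\mu_i-\lambda|\ge |\lambda|-\lambda_2(A)$. The only cosmetic difference is that the paper argues directly (assuming $|\lambda|\ge\lambda_2(A)$ and concluding $\varepsilon^2\ge(|\lambda|-\lambda_2(A))^2$), while you phrase it as a contradiction; the content is identical.
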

\begin{proof}
Since $v$ has $0$ entry sum, we can write $v=\sum a_i v_i$, where each $v_i$ is a nonconstant eigenvector  of $A$ with eigenvalue $\lambda_i$. It follows that $(A-\lambda I)v=\sum a_i (\lambda_i-\lambda)v_i$, where $|\lambda_i|\leq \lambda_2(A)$. Thus, $\varepsilon^2\geq \|(A-\lambda I)v\|^2_2=\sum a_i^2 (\lambda_i-\lambda)^2$. Suppose that $|\lambda|\geq \lambda_2(A)$ (otherwise the statement is trivial). Then $|\lambda_i-\lambda|\geq |\lambda|-\lambda_2(A)$. Therefore $\varepsilon^2\geq (\sum a_i^2)(|\lambda|-\lambda_2(A))^2$, which completes the proof by using that $\sum a_i^2=\|v\|_2^2=1$. \hfill $\square$
\end{proof}

As we mentioned in the introduction, we will give examples for both localized and delocalized almost eigenvectors on essentially large girth $d$-regular graphs. (Note that a graph is called essentially large girth if most vertices are not contained in short cycles.) The purpose of these examples is to show that our results on the almost eigenvectors of random regular graphs are best possible in the sense that all $0\leq \sigma \leq 1$ can indeed occur in the statement. 

We will need some preparation. For $k\geq 1$ and $x\in [-1,1]$ let 
\begin{equation}\label{eq:fkx}f(k,x)=\frac{1}{\sqrt{d(d-1)^{k-1}}}q_k(x),\end{equation} where
\begin{equation}\label{eq:qkx}q_k(x)=\sqrt{\frac{d-1}{d}}U_k(x)-\frac{1}{\sqrt{d(d-1)}}U_{k-2}(x); \qquad U_k(\cos \vartheta)=\frac{\sin((k+1)\vartheta)}{\sin \vartheta}.\end{equation}
($U_k(x)$ is the Chebyshev polynomial of the second kind.)  Let $g_{\lambda}: V_d\rightarrow \mathbb R$ be the function defined by \[g_{\lambda}(v)=f(|v|, \lambda/(2\sqrt{d-1})),\] where $|v|$ denotes the distance of $v$ and the root $o$. It is easy to see (and well known) that $g_{\lambda}$ satisfies the eigenvector equation with eigenvalue $\lambda$ at every vertex $v$ (for  $\lambda\in [-2\sqrt{d-1}, 2\sqrt{d-1}]$). 

Now let $G$ be a $d$-regular graph such that there is a vertex $w\in V(G)$ with the property that the shortest cycle containing $w$ has length at least $2n$. We define the function $g'_{\lambda}: V(G)\rightarrow \mathbb R$ by $g'_{\lambda}(v)
=f(d(w,v), \lambda/{2\sqrt{d-1}})$ if $d(v,w)<n$ and $0$ otherwise. It is easy to see that $u=g'_{\lambda}/\|g'_{\lambda}\|_2$ is an almost eigenvector with eigenvalue $\lambda$ with error tending to $0$ as $n\rightarrow \infty$. Furthermore, if $|V(G)|$ is much larger than $n$, then $u$ is close to the constant $0$ distribution in the weak topology. Thus we obtain examples for completely localized almost eigenvectors for $d$-regular graphs (for all $\lambda\in [-2\sqrt{d-1}, 2\sqrt{d-1}]$), which corresponds to the case $\sigma=0$. 

We switch to the delocalized example. In \cite{harangi}, the authors construct eigenvector processes on $T_d$ for every $\lambda\in [-2\sqrt{d-1}, 2\sqrt{d-1}]$ that are weak limits of factor of i.i.d.\ processes. These processes have the property that they can be arbitrarily well approximated on any essential large girth $d$-regular graph. It is easy to see that these approximations are almost eigenvectors that are completely delocalized. This corresponds to the case $\sigma=1$. Finally, every $\sigma$ occurs by mixing completely localized and completely delocalized almost eigenvectors corresponding to the same $\lambda$.

\section{Eigenvectors and eigenvector processes on the tree}

\label{chap:eigen}
For a vertex set $F\subseteq V_d$ we denote by $B_k(F)$ the neighborhood of radius $k$ around $F$.
Let $F\subseteq V_d$ be a subset of the vertices of the tree, and let $f\in \mathbb R^{F}$. We say that $v$ satisfies the eigenvector equation with eigenvalue $\lambda$ if for every $v\in F$ with $B_1(v)\subseteq F$ we have that $\lambda f(v)=\sum_{w\sim v} f(w)$. It is clear that for a fixed $\lambda$ these vectors form a linear subspace of $\mathbb R^F$ that we denote by $W_{\lambda}(F)$. We will need a formula for $\dim\, W_{\lambda}(F)$ for a family of special finite sets $F$.

Given $F$, we say that $F_0\subseteq F$ is a basis if for all $f\in \mathbb R^{F_0}$ and $\lambda \in \mathbb R$ the subspace $W_{\lambda}(F)$ contains exactly one extension of $f$ to $F$. It is clear that if $F_0$ is a basis in $F$, then $\dim\, W_{\lambda}(F)=|F_0|$ for all $\lambda\in \mathbb R$. 

\begin{lemma}Let $F_0$ be a basis of a path-connected set $F$. Suppose that $v\in F$ and $|F\cap B_1(v)|=2$. Furthermore, let $D\subseteq B_1(v)$ such that $|D|=d-2$ and $D\cap F=\emptyset$. Then $F_0\cup D$ is a basis of $F\cup B_1(v)$. \label{lem:comb}
\end{lemma}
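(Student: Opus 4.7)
The plan is to verify the two conditions in the definition of a basis for the enlarged set $F\cup B_1(v)$: that every assignment $g:F_0\cup D\to\mathbb{R}$ together with any $\lambda\in\mathbb{R}$ admits an extension to an element of $W_\lambda(F\cup B_1(v))$, and that this extension is unique. Write $u_0$ for the unique neighbor of $v$ lying in $F$ (the other element of $F\cap B_1(v)$) and $w^\ast$ for the unique element of $B_1(v)\setminus(F\cup D)$, so that $F\cup B_1(v)=F\cup D\cup\{w^\ast\}$ with a disjoint union on the right (note $F_0\subseteq F$ is disjoint from $D$ by hypothesis, so $|F_0\cup D|=|F_0|+d-2$, which matches the expected dimension).

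First I would identify the only \emph{new} vertex at which the eigenvector equation is imposed on $F\cup B_1(v)$ but not on $F$. I claim this is $v$ alone. For any $u$ with $B_1(u)\subseteq F\cup B_1(v)$ but $B_1(u)\not\subseteq F$, some neighbor $y$ of $u$ lies in $B_1(v)\setminus F$, so $y$ is a neighbor of $v$. If $u\in F$ and $u\ne v$, then $u,v\in F$ share a neighbor $y\notin F$; but the tree path from $u$ to $v$ passes through $y$, contradicting path-connectedness of $F$. If $u\in D\cup\{w^\ast\}$, then at least one neighbor $y'$ of $u$ other than $v$ is a grandchild of $v$ through $u$; such a $y'$ cannot lie in $B_1(v)$, and if it lay in $F$ the tree path from $y'$ to $v$ inside $F$ would again have to pass through $u\notin F$, a contradiction. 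Hence the only new constraint on $F\cup B_1(v)$ over $F$ is
\[
\lambda\, g(v)=g(u_0)+\sum_{w\in D}g(w)+g(w^\ast).
\]

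Given $g$ on $F_0\cup D$ and $\lambda$, the basis property of $F_0$ inside $F$ uniquely extends $g|_{F_0}$ to some $\tilde g\in W_\lambda(F)$; in particular $\tilde g(v)$ and $\tilde g(u_0)$ are determined. The values on $D$ are prescribed, and the single remaining unknown $g(w^\ast)$ is then forced by the displayed equation at $v$:
\[
g(w^\ast)=\lambda\,\tilde g(v)-\tilde g(u_0)-\sum_{w\in D}g(w).
\]
This gives existence and uniqueness of the extension, since the equations at all vertices of $F$ with $B_1(\cdot)\subseteq F$ hold by construction of $\tilde g$ and, as shown above, no further equations are activated.

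The main obstacle is the structural claim that $v$ is the only new interior vertex; everything else is bookkeeping. I expect this to go through cleanly as long as one is careful to use both the path-connectedness of $F$ and the tree property (unique paths) to rule out the two potentially problematic cases (a second vertex of $F$ adjacent to $D\cup\{w^\ast\}$, and a vertex of $D\cup\{w^\ast\}$ whose further neighbors lie in $F$).
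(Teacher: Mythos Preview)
Your proof is correct and follows essentially the same approach as the paper: extend $g|_{F_0}$ uniquely to $\tilde g\in W_\lambda(F)$, then use the eigenvector equation at $v$ to determine the single missing value $g(w^\ast)$. The paper's proof is terser---it merely asserts that ``the connectivity of $F$ implies that the function constructed this way is in $W_\lambda(F\cup B_1(v))$''---whereas you spell out carefully (using path-connectedness and the tree structure) that $v$ is the only new interior vertex, which is exactly the content of that assertion.
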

\begin{proof}
Let $f$ be a function from $F_0\cup D$ to $\mathbb R$. By assumption, we have that $f|_{F_0}$ extends to a unique function $\tilde f$ on $F$. Now using the eigenvector equation at $v$, we obtain a unique value for $B_1(v)\setminus (D \cup F)$. Note that the connectivity of $F$ implies that the function constructed this way is in $W_{\lambda}(F\cup B_1(v))$.
\end{proof}\hfill $\square$

 We will use $C$ to denote the star $B_1(o)$ and we will use $e$ to denote a distinguished edge in $T_d$.
It is clear that if $v$ is a neighbor of $o$, then $C\setminus \{v\}$ is a basis of $C$. Similarly $e$ is a basis of itself. Using Lemma \ref{lem:comb} and induction, we obtain that 
\begin{equation}\label{eq:dim} \dim\, W_{\lambda}(B_k(C))=|\partial B_k(C)|=d(d-1)^{k}; \qquad \dim\, W_{\lambda}(B_k(e))=|\partial B_k(e)|=2(d-1)^k \end{equation}
holds for every $\lambda \in \mathbb R$ and $k \in \mathbb N$, where $\partial F$ denotes the boundary of a set $F$. 

Recall that an invariant process $\{X_v\}_{v\in V_d}$ is an eigenvector process with eigenvalue $\lambda$ if it satisfies the eigenvector equation \eqref{eq:sv} with probability $1$ and $\mathrm{Var}(X_v)=1$. Notice that if  $F$ is any vertex set in $T_d$, then $\{X_v\}_{v\in F}$ is supported on $W_{\lambda}(F)$. In the rest of this chapter we investigate the joint distribution $\{X_v\}_{v\in S}$ in a non-trivial eigenvector process where $S$ is one of $B_k(C)$ or $B_k(e)$ for some $k\in\mathbb{N}$. Our goal is to find uncorrelated linear combinations of the variables $\{X_v\}_{v\in S}$ with the property that they linearly generate every random variable in $\{X_v\}_{v\in S}$. Observe that since the covariance matrix of $\{X_v\}_{v\in S}$ has rank at most $\dim(W_\lambda(S))=|\partial S|$, it is enough to find that many uncorrelated linear combinations with nontrivial variance to guarantee that they generate everything. 

Let $Y=(Y_1, Y_2,\dots,Y_n)$ be an $n$-dimensional distribution with $0$ mean such that the covariance matrix is of full rank. We can always find a linear transformation $T:\mathbb{R}^n\rightarrow\mathbb{R}^n$ with the property that the covariance matrix of $TY$ is the identity matrix. The probability distribution $TY$ is unique up to an orthogonal transformation on $\mathbb{R}^n$. We call it the standardized version of $Y$. 

Let $\mu\in I_d(\mathbb{R})$  represented by an invariant joint distribution $\{X_v\}_{v\in V_d}$ of random variables. Assume that $\mathbb{E}(X_o)=0$. With every directed edge $(v,w)$ of $T_d$ we associate a probability distribution on $\mathbb{R}^{d-2}$ defined up to an orthogonal transformation.  Let $\{v_i\}_{i=1}^{d-1}$ be the set of neighbors of $w$ different from $v$.  We denote by $A_{v,w}$ the standardized version of $\{X_{v_i}-X_{v_{d-1}}\}_{i=1}^{d-2}$. It is easy to see that (the orthogonal equivalence class of) $A_{v,w}$ does not depend on the labeling of the vectors $v_i$. 

We introduce the symmetric relation $r$ on directed edges of $T_d$ such that $((x,y),(v,w))\in r$ if and only if the unique shortest path connecting $y$ and $w$ contains at least one of $x$ and $v$. The next lemma implies that if $((x,y),(v,w))\in r$ then $A_{x,y}$ and $A_{v,w}$ are uncorrelated and so if $\mu$ is Gaussian then $A_{x,y}$ and $A_{v,w}$ are independent.

\begin{lemma}\label{arcuncorr} Let $(v,w)$ be a directed edge in $T_d$. Let $\{v_i\}_{i=1}^{d-1}$ be the set of neighbors of $w$ different from $v$. Let $X=\sum_{j=1}^k a_jX_{u_j}$ be a linear combination such that the shortest path connecting $w$ and $u_j$ contains $v$ for every $1\leq j\leq k$. Then $\mathbb{E}(X(X_{v_i}-X_{v_{d-1}}))=0$ holds for $1\leq i\leq d-2$.  
\end{lemma}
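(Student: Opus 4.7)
My plan is to reduce the statement to a pure symmetry argument using only the $\mathrm{Aut}(T_d)$-invariance of the process; notice that neither the eigenvector equation nor the normalization $\mathrm{Var}(X_o)=1$ is required here. The key observation is that the two neighbors $v_i$ and $v_{d-1}$ of $w$ should be interchangeable by some automorphism of $T_d$ that leaves every $u_j$ untouched, since the $u_j$'s all sit on the far side of $v$ from $w$.

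First I construct an automorphism $\tau\in\mathrm{Aut}(T_d)$ with $\tau(w)=w$, $\tau(v_i)=v_{d-1}$, $\tau(v_{d-1})=v_i$, and $\tau$ equal to the identity on the branch of $T_d\setminus\{w\}$ containing $v$. Deleting $w$ splits $T_d$ into $d$ rooted subtrees, one for each neighbor of $w$, all pairwise isomorphic as rooted $(d-1)$-ary trees. I pick any rooted-tree isomorphism between the branches at $v_i$ and at $v_{d-1}$, take the identity on the branch at $v$ and on the remaining $d-3$ branches, and glue via $\tau(w)=w$; this is a bona fide automorphism of $T_d$.

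Next I verify that $\tau$ fixes each $u_j$. The hypothesis that the shortest $w$-to-$u_j$ path passes through $v$ forces $u_j$ either to equal $v$ or to lie strictly inside the branch of $T_d\setminus\{w\}$ rooted at $v$; on that branch $\tau$ acts as the identity, so $\tau(u_j)=u_j$. Invoking invariance of the joint distribution of $\{X_x\}_{x\in V_d}$ under $\tau$, I get for each $j$ that
\[
\mathbb{E}(X_{u_j}X_{v_i})=\mathbb{E}(X_{\tau(u_j)}X_{\tau(v_i)})=\mathbb{E}(X_{u_j}X_{v_{d-1}}).
\]
Multiplying by $a_j$, summing over $j$ and using linearity yields $\mathbb{E}\bigl(X(X_{v_i}-X_{v_{d-1}})\bigr)=0$, which is the desired identity.

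I do not anticipate a genuine obstacle in this proof; the only point requiring discipline is writing down $\tau$ explicitly enough that the invariance of each $u_j$ is manifest. Everything else is automatic from the homogeneity of $T_d$ at the vertex $w$: any two of its $d$ branches can be swapped by an automorphism fixing all other branches pointwise.
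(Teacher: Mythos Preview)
Your proof is correct and follows essentially the same approach as the paper: both arguments reduce to showing $\mathbb{E}(X_{u_j}X_{v_i})=\mathbb{E}(X_{u_j}X_{v_{d-1}})$ via automorphism invariance, then conclude by linearity. The only cosmetic difference is that the paper phrases the key step as ``the distance of $u_j$ from $v_i$ does not depend on $i$'' (invoking that $\mathrm{Aut}(T_d)$ acts transitively on pairs at a given distance), whereas you construct the swapping automorphism $\tau$ explicitly; your version is slightly more self-contained but the underlying idea is identical.
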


\begin{proof} The condition on the vertices $u_j$ guarantees that for every fix $j$ the distance of $u_j$ from $v_i$ does not depend on $i$. Using this and the automorphism invariance of $\mu$ it follows that $\mathbb{E}(X_{u_j}(X_{v_i}-X_{v_{d-1}}))=0$ holds for every $1\leq j\leq k$ and $1\leq i\leq d-2$. By linearity of expected value the proof is complete. \hfill $\square$
\end{proof}

Let $S\subset V_d$ be either $B_k(C)$ or $B_k(e)$ for some $k\in\mathbb{N}$. Assume that $\mu$ is an eigenvector process. Let $p\in S$ be such that it has distance at least one from the boundary $\partial S$. 
Let $D$ denote the set of directed edges $(v,w)$ inside $S$ with the following three properties: $(a)$ the unique shortest path connecting $p$ and $w$ contains $v$, $(b)$ $p\notin\{v,w\}$, $(c)$ $B_1(w)\subset S$. Let $B_p$ denote the joint distribution $(X_v-X_p)_{v\sim p}$. We denote by $\mathcal{Q}(S,p,\mu)$ the joint distribution of the random variables $\{A_{v,w}\}_{(v,w)\in D}$ and $B_p$. Lemma \ref{arcuncorr} implies that the components of $\mathcal{Q}(S,p,\mu)$ are uncorrelated multidimensional random variables. It is easy to see that if $|\lambda|<d$, then the correlation matrix of each such multidimensional random variable is of full rank. This implies that if $|\lambda|<d$, then $\mathcal{Q}(S, p, \mu)$ provides a linear basis for $\{X_v\}_{v\in S}$. By counting dimensions, this yields the following corollary. 

\begin{corollary}\label{cor:qsp}
Let either $S=B_k(C)$ or $S=B_k(e)$ and $\mu$ an eigenvector process with eigenvalue $|\lambda|<d$. Then we have that 
\[\langle \supp\ (\mu_S)\rangle_{\mathbb R}=W_{\lambda}(S).\]
\end{corollary}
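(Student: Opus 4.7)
The inclusion $\langle\supp(\mu_S)\rangle_{\mathbb{R}}\subseteq W_\lambda(S)$ is immediate, because $\mu$ satisfies the eigenvector equation almost surely at every interior vertex of $S$, so $\mu_S$ sits in the linear subspace $W_\lambda(S)$ of $\mathbb{R}^S$. The hypothesis $|\lambda|<d$ rules out the trivial process, hence $\mathbb{E}(X_o)=0$, and the span of $\supp(\mu_S)$ coincides with the image of the covariance matrix $\Sigma$ of $\{X_v\}_{v\in S}$. Combined with the dimension formula \eqref{eq:dim}, it therefore suffices to exhibit $|\partial S|$ linear combinations of $\{X_v\}_{v\in S}$ that are pairwise uncorrelated and each have strictly positive variance; this forces $\mathrm{rank}(\Sigma)\geq|\partial S|=\dim W_\lambda(S)$ and closes the argument.

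These linear combinations are precisely the components of $\mathcal{Q}(S,p,\mu)$: the $d$ coordinates of $B_p$ together with the $d-2$ coordinates of $A_{v,w}$ for each $(v,w)\in D$. Pairwise uncorrelatedness follows from Lemma \ref{arcuncorr}: for any two edges $(v_1,w_1),(v_2,w_2)\in D$, a short case analysis on the lowest common ancestor of $w_1,w_2$ in the tree rooted at $p$ shows that the shortest path from $w_1$ to $w_2$ contains at least one of $v_1,v_2$, placing the pair in the relation $r$; between $B_p$ and any $A_{v,w}$ the path from $w$ to $p$ and to each neighbor of $p$ traverses the parent edge $v$, which is exactly the hypothesis of the lemma. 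Within each piece the built-in standardization supplies uncorrelated coordinates, and the full-rank hypothesis (valid for $|\lambda|<d$) ensures these coordinates have positive variance.

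The dimension count is then a direct telescoping: for $S=B_k(C)$ with $p=o$ the set $D$ contributes one directed edge $(v,w)$ for each non-root vertex $w\in S$ with $B_1(w)\subseteq S$, and summing $(d-2)\sum_{j=1}^{k}d(d-1)^{j-1}=d(d-1)^k-d$ yields $(d-2)|D|+d=d(d-1)^k=|\partial B_k(C)|$; the analogous computation for $S=B_k(e)$ with $p$ an endpoint of $e$, split over the two subtrees dangling off $e$, produces $(d-2)|D|+d=2(d-1)^k=|\partial B_k(e)|$.

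The main obstacle is justifying the full-rank property for each component of $\mathcal{Q}$, which the paper dispatches as ``easy to see''. For $A_{v,w}$ the only linear relation imposed on $X_{v_1},\dots,X_{v_{d-1}}$ by the process is the eigenvector equation at $w$, which constrains only their sum $\lambda X_w-X_v$ and therefore cannot annihilate any of the $d-2$ standardized differences; for $B_p$ a nontrivial linear dependence would combine with $\sum_{v\sim p}(X_v-X_p)=(\lambda-d)X_p$ to force $X_p=0$ almost surely, contradicting $\mathrm{Var}(X_p)=1$ since $\lambda\neq d$. Making these reductions rigorous amounts to computing the stationary covariances from the recurrence $(d-1)\rho_k=\lambda\rho_{k-1}-\rho_{k-2}$ and checking non-singularity of the associated small Gram matrices for every $|\lambda|<d$.
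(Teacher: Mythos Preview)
Your argument follows the paper's route exactly: the trivial inclusion $\langle\supp(\mu_S)\rangle\subseteq W_\lambda(S)$, then a lower bound on the rank of the covariance matrix via the mutually uncorrelated pieces of $\mathcal{Q}(S,p,\mu)$, matched against the dimension count \eqref{eq:dim}.

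One slip: you write that ``within each piece the built-in standardization supplies uncorrelated coordinates'', but unlike the $A_{v,w}$, the block $B_p=(X_v-X_p)_{v\sim p}$ carries no standardization, and its $d$ coordinates are genuinely correlated (the covariance matrix is $aI+bJ$ with $a=1-\rho_2$, $b=\rho_2-2\rho_1+1$). What is actually needed---and what both you and the paper ultimately use---is only that the \emph{blocks} are mutually uncorrelated and each has full-rank covariance, so that the total rank is the sum of block ranks; the eigenvalues of $aI+bJ$ work out to $(d^2-\lambda^2)/(d(d-1))$ and $(d-\lambda)^2/d$, both strictly positive for $|\lambda|<d$, which completes the check your last paragraph gestures at. (Your heuristic ``a nontrivial dependence forces $X_p=0$'' only rules out the all-ones direction in the kernel of $aI+bJ$; the computation above is what excludes the complementary $\sum c_v=0$ directions.)
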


\section{Typical processes and the limiting form of the main theorem}

\label{typpro}

Now we describe a limiting form of our main theorem using typical processes on $T_d$. Typical processes on $T_d$ were first introduced in \cite{BSz} to study the properties of random regular graphs via ergodic theory. 
Here we use a slightly different definition which extends the original notion to processes that take values in a separable, metrizable space $Y$. 

\begin{definition} Let $Y$ be separable, metrizable topological space and let $\mu$ be a Borel probability distribution on $Y^{V_d}$. We say that $\mu$ is a typical process if there is a growing sequence of natural numbers $\{n_i\}$ with the following property. Assume that $\{G_i\}_{i=1}^n$ is a random graph sequence whose elements $G_i$ are independently chosen random $d$-regular graphs on $n_i$ vertices. Then with probability $1$ there are maps $f_i:V(G_i)\to Y$ such that the distributions ${\rm distr}^*(f_i,G_i)$ are converging to $\mu$ in the weak topology. 
\end{definition}

Our main theorem in the limit setting is the following.

\begin{theorem}[Limiting form of the main theorem]\label{mainlim} If $\mu$ is a nontrivial typical eigenvector process with eigenvalue $\lambda$, then $|\lambda|\leq 2\sqrt{d-1}$ and $\mu$ is the Gaussian wave $\Psi_\lambda$.
\end{theorem}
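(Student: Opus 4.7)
I start with the easy half, the bound $|\lambda|\le 2\sqrt{d-1}$. Let $G_i$ be a defining sequence of random $d$-regular graphs on $n_i$ vertices together with maps $f_i:V(G_i)\to\mathbb R$ whose distributions ${\rm distr}^*(f_i,G_i)$ converge weakly to $\mu$. Since $\mu$ is nontrivial, $\mathbb E(X_o)=0$ and $\mathrm{Var}(X_o)=1$; combined with invariance this forces the $\ell^2$-normalized $f_i$ to have entry sum $o(\sqrt{n_i})$, and the almost-sure eigenvector equation for $\mu$ translates into $f_i$ being an $o(1)$-almost eigenvector of $G_i$ with eigenvalue $\lambda$. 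Friedman's theorem gives $\lambda_2(G_i)\le 2\sqrt{d-1}+o(1)$ with high probability, and Lemma \ref{lem:almost} then yields $|\lambda|\le 2\sqrt{d-1}$.

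For Gaussianity I would follow the outline already laid out in the introduction. First, reduce to the case of \emph{smooth} eigenvector processes: convolve $\mu$ coordinatewise with a small invariant centered Gaussian noise; the resulting $\mu^{(t)}$ should still be a typical eigenvector process, because the eigenvector equation is linear and the added noise can itself be realised as a Gaussian wave. If every smooth $\mu^{(t)}$ is Gaussian then, since Gaussian waves are closed under weak limits and convolution, letting $t\to 0$ gives Gaussianity of $\mu$ as well.

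The heart of the argument is then a pair of matching inequalities for the differential entropy functional
\[
\Phi_k(\mu):= h(\mu_{B_k(C)})-\tfrac{d}{2}\,h(\mu_{B_k(e)}),
\]
where $h$ denotes differential entropy, well-defined on the subspaces $W_\lambda(B_k(C))$ and $W_\lambda(B_k(e))$ of respective dimensions $d(d-1)^k$ and $2(d-1)^k$ by Corollary \ref{cor:qsp} and equation \eqref{eq:dim}, and the coefficient $d/2$ reflects the vertex-to-edge ratio of a $d$-regular graph. From the combinatorial entropy inequality for typical processes (Chapter \ref{chap:entr}), applied to finite quantizations of a smooth $\mu$ and passed to the limit, I expect a lower bound of the form $\Phi_k(\mu)\ge\Phi_k(\Psi_\lambda)$. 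The covariance-eigenvalue calculation of Chapter \ref{chap:cov}, combined with the orthogonal decomposition $\mathcal{Q}(S,p,\mu)$ from Lemma \ref{arcuncorr}, pins down the covariance of $\Psi_\lambda$ on $B_k(C)$ and $B_k(e)$ exactly and shows that the Gaussian wave is indeed the extremizer. For the reverse inequality, run heat flow on the tree, $\mu\mapsto\mu_s$, by adding coordinatewise independent Gaussian noise of variance $s$; applying DeBrujin's identity $\tfrac{d}{ds}h(X_s)=\tfrac12 I(X_s)$ componentwise to $\Phi_k(\mu_s)$ and comparing Fisher informations against those of the propagated Gaussian wave $\Psi_{\lambda,s}$ (using that $\mu_s$ converges in distribution to a Gaussian as $s\to\infty$), one obtains the upper bound $\Phi_k(\mu)\le\Phi_k(\Psi_\lambda)$. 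Sandwiching these forces equality, and the equality case of the Gaussian maximum-entropy principle (at fixed covariance) makes $\mu_{B_k(C)}$ Gaussian for every $k$. Kolmogorov extension then promotes this to joint Gaussianity of $\mu$, and the uniqueness of the Gaussian wave at eigenvalue $\lambda$ proved in \cite{wave} concludes $\mu=\Psi_\lambda$.

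The main obstacle is the sharp entropy inequality for typical processes (step (ii)) and the matching covariance-eigenvalue calculation that makes the Gaussian wave simultaneously the minimizer and the maximizer of $\Phi_k$. Getting the entropy inequality requires a delicate coupling/counting argument for coloured random $d$-regular graphs, and verifying that the minimization and maximization bounds really match up requires the explicit spectral data for the covariance of $\Psi_\lambda$ on $B_k(C)$ and $B_k(e)$. Smoothing, heat propagation and DeBrujin's identity are then standard machinery once the sharp inequality is in hand.
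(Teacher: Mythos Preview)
Your overall architecture is right and matches the paper: Friedman for the eigenvalue bound, reduction to smooth processes, a lower bound on the entropy functional from the typicality inequality, and an upper bound via heat flow and de~Bruijn. But the way you close the argument has a genuine gap.

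The functional $\Phi_k(\mu)=\mathbb{D}_{\rm sp}(\mu_{B_k(C)})-\tfrac{d}{2}\mathbb{D}_{\rm sp}(\mu_{B_k(e)})$ is a \emph{difference} of differential entropies, with a negative coefficient on the edge term. Equality $\Phi_k(\mu)=\Phi_k(\Psi_\lambda)$ therefore does \emph{not} follow from, nor imply, the Gaussian maximum-entropy principle at fixed covariance: the Gaussian maximizes both $\mathbb{D}_{\rm sp}(\mu_{B_k(C)})$ and $\mathbb{D}_{\rm sp}(\mu_{B_k(e)})$ separately, so equality in the combination says nothing about either summand. Your final step ``equality case of the Gaussian maximum-entropy principle makes $\mu_{B_k(C)}$ Gaussian'' does not go through.

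The paper resolves this with two ingredients you are missing. First, a monotonicity lemma (Lemma~\ref{imprlem2}): for any smooth eigenvector process, $k\mapsto \Phi_k(\nu)$ is nonincreasing, with equality for the Gaussian wave and with equality \emph{forcing the $2$-Markov property}. Since the typicality inequality only gives $\Phi_k(\mu)\ge 0$ while the covariance computation gives $\Phi_k(\Psi_\lambda)\to 0$, one only knows $\limsup_k(\Phi_k(\mu)-\Phi_k(\Psi_\lambda))\ge 0$; monotonicity then transfers this to $k=0$, i.e.\ to the star $C$ and the edge $e$. Second, the heat-flow/de~Bruijn argument is carried out \emph{only on the star} (Theorem~\ref{gausscond}), where a geometric identity for the frame $\{a_i,b_i\}$ (Lemma~\ref{wr}, valid precisely when $|\lambda|\le 2\sqrt{d-1}$) makes the Fisher-information comparison work and identifies the Gaussian as the unique maximizer of $\mathcal D(F)=\Phi_0$. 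Sandwiching at $k=0$ now gives $F=F^*$; the equality case of the monotonicity lemma gives that $\mu$ is $2$-Markov; and a $2$-Markov process is determined by its star marginal, so $\mu=\Psi_\lambda$. A minor additional point: your smoothing step should add a small multiple of the Gaussian \emph{wave} $\Psi_\lambda$, not coordinatewise i.i.d.\ noise, since the latter destroys the eigenvector equation.
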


Using the fact that weak limits of factor of i.i.d processes are typical (see \cite{BSz}) we obtain the next corollary which answers a question of B. Vir\'ag.
\begin{corollary}\label{mainlimcorr} If $\mu$ is a nontrivial eigenvector process that is a weak limit of factor of i.i.d.\ processes, then $\mu$ is a Gaussian wave with eigenvalue $|\lambda|\leq 2\sqrt{d-1}$. 
\end{corollary}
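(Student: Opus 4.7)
The plan is to reduce this statement directly to Theorem \ref{mainlim}. The only hypothesis there is that $\mu$ be a nontrivial typical eigenvector process, so the whole task collapses to verifying that $\mu$ is typical; then the two conclusions ($|\lambda|\leq 2\sqrt{d-1}$ and $\mu=\Psi_\lambda$) come for free.

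For the typicality verification I would invoke the characterization recalled just before the corollary and attributed to \cite{BSz}: every weak limit of factor of i.i.d.\ processes on $T_d$ is typical. The intuition is that a factor of i.i.d.\ process is produced by applying a local (block) rule to i.i.d.\ labels on the vertices of $T_d$. This rule can be simulated on any finite $d$-regular graph $G_i$ by placing i.i.d.\ labels on $V(G_i)$ and running the same local rule. Because $G(n,d)$ Benjamini--Schramm converges to $T_d$ and ${\rm distr}^*(\cdot,G_i)$ is defined via random covering maps of $T_d$, the induced distributions ${\rm distr}^*(f_i,G_i)$ converge weakly to the factor of i.i.d.\ process one started with. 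A standard diagonal argument across (i) a block-factor approximation of the defining rule and (ii) a weakly convergent sequence of factor of i.i.d.\ processes approximating $\mu$ then produces a single sequence $n_i\to\infty$ and maps $f_i:V(G_i)\to\mathbb{R}$ whose push-forwards converge to $\mu$ almost surely, which is the definition of typicality.

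Once $\mu$ is known to be typical, Theorem \ref{mainlim} applies directly: any nontrivial typical eigenvector process is the Gaussian wave $\Psi_\lambda$ with $|\lambda|\leq 2\sqrt{d-1}$. The substantive content is therefore entirely in Theorem \ref{mainlim}; the corollary is a translation of hypotheses from the factor of i.i.d.\ setting to the typical setting. The one step that requires genuine care is the diagonal approximation: one must ensure that a single random graph sequence works simultaneously for the whole double limit (block radius and factor of i.i.d.\ index), and that the almost-sure statement in the definition of typicality survives both approximations. This, however, is exactly the packaging done in \cite{BSz}, and no new ideas are needed beyond citing it and invoking Theorem \ref{mainlim}.
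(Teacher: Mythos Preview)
Your proposal is correct and matches the paper's approach exactly: the corollary is deduced from Theorem \ref{mainlim} by invoking the fact from \cite{BSz} that weak limits of factor of i.i.d.\ processes are typical. The additional intuition you sketch for why factor of i.i.d.\ processes (and their weak limits) are typical is accurate but not needed beyond the citation.
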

Note that Corollary \ref{mainlimcorr} implies that if many eigenvector processes corresponding to the same eigenvalue are coupled in a way that the coupling is a weak limit of factor of i.i.d.\ processes, then its distribution is jointly Gaussian.

We emphasize that the first part of the statement in Theorem \ref{mainlim}, namely that $|\lambda|\leq 2\sqrt{d-1}$ is a consequence of Friedman's theorem \cite{friedman}. We prove this implication in Lemma \ref{lem:equiv}. In addition, the main goal of this chapter is to show that Theorem \ref{mainlim} implies Theorem \ref{main}.

\begin{lemma}\label{lem:friedman}
If $\mu$ is a nontrivial typical eigenvector process with eigenvalue $\lambda$, then $|\lambda|\leq 2\sqrt{d-1}$.
\end{lemma}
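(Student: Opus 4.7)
The plan is to unfold the definition of typicality to get a sequence of finite $d$-regular graphs carrying almost-eigenvectors with eigenvalue $\lambda$, and then invoke Friedman's theorem together with Lemma~\ref{lem:almost}. By the definition of typical process there is a growing sequence $n_i$ and, on a probability-$1$ event with respect to the random choice of $G_i$ uniform among $d$-regular graphs on $n_i$ vertices, functions $f_i:V(G_i)\to\mathbb{R}$ with ${\rm distr}^*(f_i,G_i)\to\mu$ weakly on $\mathbb{R}^{V_d}$.

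From this weak convergence I would extract the three quantitative facts needed to run Lemma~\ref{lem:almost}. Writing $A_i$ for the adjacency matrix of $G_i$, the normalization $\mathrm{Var}(X_o)=1$, the nontriviality $\mathbb{E}_\mu[X_o]=0$, and the eigenvector equation $\sum_{v\sim o}X_v=\lambda X_o$ holding $\mu$-almost surely should translate respectively into
\[\frac{\|f_i\|_2^2}{n_i}\to 1,\qquad \frac{1}{n_i}\sum_{u\in V(G_i)} f_i(u)\to 0,\qquad \frac{\|(A_i-\lambda I)f_i\|_2^2}{n_i}\to 0.\]
The first and third are $L^2$-statements not automatic from weak convergence alone, so I expect to first truncate each $f_i$ at a slowly growing level $M_i\to\infty$: finite variance of $\mu$ makes the tail contribution vanish as $M_i\to\infty$, so the truncation preserves weak convergence and upgrades it to the moment statements above. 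Handling this truncation carefully is the main technical nuisance in the argument.

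Now set $\tilde f_i:=f_i-\bar f_i\cdot\mathbf{1}$, where $\bar f_i$ is the mean of $f_i$, and $v_i:=\tilde f_i/\|\tilde f_i\|_2$. The three displayed limits imply that almost surely $v_i$ has entry sum $0$, unit $\ell^2$-norm, and $\|(A_i-\lambda I)v_i\|_2\to 0$; the subtraction of the mean costs nothing asymptotically because nontriviality of $\mu$ forces $\lambda\neq d$ (the only invariant eigenvector process with $\lambda=d$ is constant, hence trivial). Thus $v_i$ is an $\varepsilon_i$-almost eigenvector of $A_i$ with eigenvalue $\lambda$ and entry sum $0$, where $\varepsilon_i\to 0$. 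Friedman's theorem gives $\lambda_2(A_i)\le 2\sqrt{d-1}+o(1)$ almost surely, and plugging $v_i$ into Lemma~\ref{lem:almost} yields $|\lambda|\le\lambda_2(A_i)+\varepsilon_i\le 2\sqrt{d-1}+o(1)$. Letting $i\to\infty$ gives $|\lambda|\le 2\sqrt{d-1}$, as required.
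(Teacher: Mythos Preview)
Your proposal is correct and follows essentially the same route as the paper: extract approximating graphs $G_i$ and functions $f_i$ from typicality, truncate to upgrade weak convergence to the needed second-moment control, center and normalize to obtain an almost eigenvector with zero entry sum, and combine Friedman's theorem with Lemma~\ref{lem:almost}. The paper truncates at a \emph{fixed} level $k$ chosen for each $\varepsilon$ (rather than a growing $M_i$), and your aside that nontriviality forces $\lambda\neq d$ is unnecessary---the centering step works regardless since $\bar f_i\to 0$ already follows from $\mathbb{E}_\mu[X_o]=0$.
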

\begin{proof}
Our first goal is to prove that there exists a sequence of $d$-regular graphs $\{G_i\}_{i=1}^{\infty}$ and vectors $\{f_i: V(G_i)\rightarrow \mathbb R\}_{i=1}^{\infty}$ such that $(a)$ $|\lambda_2(G_i)|\rightarrow 2\sqrt{d-1}$, where $\lambda_2(G)$ denotes the second largest (in absolute value) eigenvalue of a finite graph $G$; $(b)$ $\mathrm{distr}^*(f_i, G_i)\rightarrow \mu$ in the weak topology. Using that $\mu$ is typical, there exists a sequence $\{n_i\}_{i=1}^{\infty}$ of growing natural numbers such that with probability 1, if $\{G'_i\}_{i=1}^{\infty}$ is a sequence of independent random $d$-regular graphs with $|V(G'_i)|=n_i$, then there exists a sequence of functions $\{f'_i\}_{i=1}^{\infty}$ satisfying $(b)$.   Friedman's theorem \cite{friedman} implies that with probability 1 we can choose a further subsequence satisfying $(a)$.

We fix an arbitrary $\varepsilon>0$. Let $X_o$ be a random variable with distribution $\mu_o$. Since $\mathbb E(X^2_o)=1$, we can find $k>0$ such that $k$ is a continuity point of the cumulative distribution function of $X_o$ and $k^2\mathbb P(|X_o|>k)<\varepsilon$.  Furthermore, if $k$ is sufficiently large, we can also assume that $\mathbb E([X_o]^2_k)\geq 1/2$ and $|\mathbb E([X_o]_k)|\leq \varepsilon$,  where $[x]_k=\max(\min(x, k), -k)$. Using that $\mathrm{distr}^*(f_i, G_i)$ converges to the eigenvector process $\mu$, we obtain that if $i$ is large enough, then $(i)$ $k^2\mathbb P(\mathrm{distr}(|f_i|)>k)<2\varepsilon$; $(ii)$ $\mathbb P(\mathrm{distr}(|(G_i-\lambda I)f_i|)\geq \varepsilon)\leq \varepsilon/k^2$; $(iii)$ $\mathbb E(\mathrm{distr}([f_i]_k^2))\geq 1/3$; $(iv)$ $c_i=|\mathbb E(\mathrm{distr}([f_i]_k))|\leq 2\varepsilon$. Note that $(i)$ implies that $f_i\neq [f_i]_k$ holds on a vertex set of density at most $2\varepsilon/k^2$. It follows that $(G_i-\lambda I)[f_i]_k\neq (G_i-\lambda I)f_i$ holds on a vertex of density at most $(d+1)2\varepsilon/k^2$. Furthermore, we have that $\|(G_i-\lambda I)[f_i]_k\|_{\infty}\leq (d+|\lambda|)k$. Putting all this together, we obtain that for $i$ large enough
\[\|(G_i-\lambda I)([f_i]_k-c_i)\|_2\leq \sqrt{\varepsilon^2n_i+(2d+3)(\varepsilon/k^2)\cdot n_i(d+|\lambda|)^2k^2}+|d-\lambda|c_i\sqrt{n_i}\]\[\leq\sqrt{n_i}\big(\sqrt{\varepsilon^2+(2d+3)\varepsilon(d+|\lambda|)^2}+2|d-\lambda|\varepsilon\big).\]
Let $v_i=([f_i]_{k}-c_i)/\|[f_i]_{k}-c_i\|_2$. Using that $(iii)$ implies that $\|[f_i]_k\|_2^2\geq 1/3n_i$, we obtain that for an appropriate choice of small enough $\varepsilon$  and large enough $i$  the quantity $\|(G_i-\lambda I)v_i\|_2$ is arbitrarily small.
Thus, by using $(a)$ and Lemma \ref{lem:almost}, we get that $|\lambda|\leq 2\sqrt{d-1}$. 
\hfill $\square$
\end{proof}

\begin{proposition} \label{lem:equiv} Theorem \ref{mainlim} implies Theorem \ref{main}.
\end{proposition}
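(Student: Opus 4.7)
My approach would be a compactness-and-contradiction argument. Assume Theorem \ref{main} fails; then there exist $\varepsilon_0 > 0$, $n_i \to \infty$, and $\delta_i \to 0$ such that for each $i$, with probability at least $\varepsilon_0$, the random $d$-regular graph $G_i$ on $n_i$ vertices admits a $\delta_i$-almost eigenvector $v_i$ (entry sum $0$, $\|v_i\|_2 = 1$) with eigenvalue $\lambda_i$ whose distribution $\mu_i := {\rm distr}^*(\sqrt{n_i}\,v_i, G_i)$ is more than $\varepsilon_0$-far in the chosen metrization of the weak topology on $\mathbb R^{V_d}$ from every Gaussian wave $\Psi_\lambda$ with $|\lambda| \leq 2\sqrt{d-1}$. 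Fix a jointly measurable selection of $(v_i,\lambda_i)$ on this event.

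The next step is to extract a limit. Friedman's theorem combined with Lemma \ref{lem:almost} applied to the entry-sum-zero vector $v_i$ gives $|\lambda_i| \leq 2\sqrt{d-1} + o(1)$ on the high-probability near-Ramanujan event, so by compactness of $[-2\sqrt{d-1}, 2\sqrt{d-1}]$ one may pass to a subsequence with $\lambda_i \to \lambda$. Each $\mu_i$ satisfies $\mathbb E_{\mu_i}(X_o^2) = 1$, and since the covering maps cover each vertex uniformly, the marginal on any finite $F \subseteq V_d$ has uniformly bounded second moment, yielding tightness of $\{\mu_i\}$ in $I_d(\mathbb R)$. A truncation step mirroring the proof of Lemma \ref{lem:friedman} prevents escape of mass at infinity, so after a further subsequence $\mu_i \to \mu$ weakly, with $\mathrm{Var}_\mu(X_o) = 1$. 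The ${\rm Aut}(T_d)$-invariance (already enjoyed by each $\mu_i$), the eigenvector equation with eigenvalue $\lambda$ (via $\delta_i \to 0$), and $\mathbb E_\mu(X_o) = 0$ (from entry sum $0$) all pass to the limit, so $\mu$ is a \emph{nontrivial} eigenvector process.

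The step I expect to require the most care is verifying that $\mu$ is a \emph{typical} process as defined in Section \ref{typpro}: that definition demands the almost-sure existence of witnessing functions $f_i$ for independent random $G_i$, whereas we only know the bad event occurs with probability at least $\varepsilon_0$. To bridge this gap, I would first pigeonhole over the compact space $I_d(\mathbb R)$ to fix a deterministic cluster point $\mu^*$ of the random sequence $\mu_i$ that is approached with positive probability; then use concentration of local statistics of random $d$-regular graphs (through configuration-model switching, or contiguity with the uniform model) to show that the existential event ``there is an $f_i$ with ${\rm distr}^*(f_i, G_i)$ within $1/i$ of $\mu^*$'' has probability tending to $1$ along a suitable subsequence, thereby producing the almost-sure witnesses and establishing typicality of $\mu^*$. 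Once typicality is in hand, Theorem \ref{mainlim} forces $\mu^* = \Psi_{\lambda}$ with $|\lambda| \leq 2\sqrt{d-1}$, which contradicts the fact that $\mu^*$ inherits the $\varepsilon_0$-separation from every Gaussian wave. This contradiction proves the proposition.
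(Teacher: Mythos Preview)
Your approach is essentially the paper's: contradiction, extract a limiting eigenvector process bounded away from every Gaussian wave, establish its typicality via concentration of local statistics under edge switchings, and invoke Theorem~\ref{mainlim}. The paper's implementation packages the concentration step by introducing a Hausdorff-type metric $m^*(G_1,G_2)=d_H(S(G_1),S(G_2))$ on $d$-regular graphs (where $S(G)=\{{\rm distr}^*(f,G):\mathbb E({\rm distr}(f^2))\le 1\}$), showing it is edge-switching Lipschitz and hence concentrated (Lemmas~\ref{lem:lip}--\ref{lem:typgraph}), and then selecting \emph{deterministic} graphs $G_i$ that are simultaneously $\varepsilon_i'$-typical and carry a bad almost eigenvector---this sidesteps your measurable-selection concern entirely and keeps the subsequence extraction deterministic throughout.
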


We need a few notions and lemmas. Let $\mathcal{P}$ denote the set of Borel probability distributions $\mu$ on $\mathbb{R}^{V_d}$ which have a second moment bounded from above by $1$ at each coordinate. 
By tightness of $\mathcal P$, we have that $\mathcal{P}$ is compact with respect to the weak topology of measures.  Let $m$ be a fixed metrization of the weak topology on $\mathcal{P}$.  Let us denote by $\mathcal{T}$ the set of closed subsets in $\mathcal{P}$, and by $d_H$ the  the Hausdorff metric on $\mathcal{T}$. We have that $d_H$ induces a compact topology on $\mathcal{T}$.
Let us define the distance $m^*$ for $d$-regular graphs in the following way. If $G_1$ and $G_2$ are $d$-regular graphs then $m^*(G_1,G_2)$ is the infimum of the numbers $\delta$ with the property the if $f_1:V(G_1)\to \mathbb{R},f_2:V(G_2)\to \mathbb{R}$ are arbitrary functions with $\mathbb{E}({\rm distr}(f_1^2)),\mathbb{E}({\rm distr}(f_2^2))\leq 1$ then there are functions $f_1':V(G_2)\to\mathbb{R}, f_2':V(G_1)\to \mathbb{R}$ with $\mathbb{E}({\rm distr}(f_1'^2)),\mathbb{E}({\rm distr}(f_2'^2))\leq 1$ such that $$m({\rm distr}^*(f_1,G_1),{\rm distr}^*(f_1',G_2))\leq\delta ~~~,~~~m({\rm distr}^*(f_2,G_2),{\rm distr}^*(f_2',G_1))\leq\delta.$$
We can describe the metric $m^*$ in terms of the metric $d_H$ as follows. For a graph $G$, let 
\[S(G)=\{\mathrm{distr}^*(f, G)|f\in \mathbb{R}^{V(G)},\mathbb{E}(\mathbb{\rm distr}(f^2))\leq 1\}.\]
We have that $m^*(G_1, G_2)=d_H(S(G_1), S(G_2))$.

\begin{definition} We say that a finite $d$-regular graph $G$ is $\varepsilon$-typical for some $\varepsilon>0$ if with probability at least $1-\varepsilon$ a random $d$-regular graph  $G'$ on $|V(G)|$ vertices  has the property that $m^*(G, G')<\varepsilon$.
\end{definition}

\begin{lemma} \label{lem:lip}For every $\varepsilon>0$ and $T\in \mathcal T$ there exists $n(\varepsilon)$ such that for all $N>n(\varepsilon)$ there exists a value $c$ satisfying 
\begin{equation}\label{eq:dh}\mathbb P(|d_H(S(G), T)-c|>\varepsilon)<\varepsilon.\end{equation}
\end{lemma}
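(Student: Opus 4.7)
Since $(\mathcal{P}, m)$ is a compact metric space, $Z := d_H(S(G), T)$ is a bounded random variable. The plan is to concentrate $Z$ around its expectation by first truncating $m$ to finitely many marginals and then applying a bounded-differences inequality in the configuration model of $G$.

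First I would fix $\varepsilon > 0$ and reduce to a truncated metric. Since $m$ metrizes the weak topology on $\mathbb{R}^{V_d}$ and $\mathcal{P}$ is compact, $m$ is uniformly equivalent to the canonical metric $\sum_k 2^{-k} d_k(\mu_{B_k(o)}, \nu_{B_k(o)})$, where $d_k$ is the $1$-bounded Lipschitz distance on $\mathbb{R}^{B_k(o)}$. I choose $k_0 = k_0(\varepsilon)$ so that the tail $\sum_{k > k_0} 2^{-k}$ is at most $\varepsilon/3$, and let $d_H^{(k_0)}$ denote the Hausdorff distance formed using only the first $k_0$ terms. Setting $Z^{(k_0)} := d_H^{(k_0)}(S(G), T)$, one has $|Z - Z^{(k_0)}| \leq \varepsilon/3$ deterministically, so it suffices to concentrate $Z^{(k_0)}$ at scale $\varepsilon/3$.

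The key Lipschitz estimate I would then establish is: if $G, G'$ are $d$-regular graphs on $[N]$ differing in at most $c_0$ edges, then $d_H^{(k_0)}(S(G), S(G')) = O(c_0\, d^{k_0}/N)$. Given any $f:[N]\to\mathbb{R}$ with $\mathbb{E}(\mathrm{distr}(f^2)) \leq 1$, use the same $f$ on $V(G') = [N]$ and couple random coverings $\phi \in \mathrm{Hom}^*(T_d, G)$ and $\phi' \in \mathrm{Hom}^*(T_d, G')$ by sampling a common uniform root and extending in parallel. By the root-independence property stated in Chapter~\ref{chap:main}, $\phi(u)$ is uniform on $[N]$ for every $u \in V_d$, so a union bound gives $\mathbb{P}(\phi(B_{k_0}(o)) \text{ meets a modified endpoint}) \leq |B_{k_0}(o)|\cdot 2c_0/N = O(c_0 d^{k_0}/N)$. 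Off this ``bad event'' the extensions can be coupled to agree on $B_{k_0}(o)$, so the marginals match; on the bad event the bounded-Lipschitz test functions are uniformly bounded, so the contribution to $d_{k_0}$ is controlled solely by the event's probability.

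Finally I would apply Azuma's inequality. Generate $G$ in the configuration model by a uniformly random pairing of the $Nd$ half-edges, conditioning on simplicity (which costs only a constant factor in probabilities). Expose pairs one at a time, producing a Doob martingale for $Z^{(k_0)}$ with $\Theta(N)$ steps; resampling one pair alters $O(1)$ edges of $G$, so by the Lipschitz estimate the martingale differences are bounded by $\eta_N := O(d^{k_0}/N)$. Azuma then yields
\[\mathbb{P}\!\left(|Z^{(k_0)} - \mathbb{E}(Z^{(k_0)})| > \varepsilon/3\right) \leq 2\exp\!\left(-\frac{c_1 \varepsilon^2 N}{d^{2k_0}}\right),\]
which is smaller than $\varepsilon$ once $N \geq n(\varepsilon)$. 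Taking $c := \mathbb{E}(Z^{(k_0)})$ and combining with the truncation bound $|Z - Z^{(k_0)}| \leq \varepsilon/3$ finishes the argument. The main obstacle is the Lipschitz estimate on the bad event, where the coverings diverge and $f$ may have arbitrarily large entries; the resolution is that $d_{k_0}$ is defined via bounded Lipschitz test functions, so the per-event contribution is $O(1)$ and the total distance is $O(d^{k_0}/N)$, independent of $\|f\|_\infty$.
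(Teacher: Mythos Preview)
Your approach is correct and follows essentially the same strategy as the paper: show that the real-valued parameter $G\mapsto d_H(S(G),T)$ changes only slightly under a single edge-switching (because at most $O(d^{k_0}/N)$ of the radius-$k_0$ rooted neighbourhoods see any of the modified edges), and then invoke a bounded-differences concentration inequality for random $d$-regular graphs. The paper compresses this into two sentences, simply observing that the $B_k(o)$-marginals of $\mathrm{distr}^*(f,G)$ and $\mathrm{distr}^*(f,G')$ differ on a vanishing fraction of roots and then citing Wormald's concentration theorem for switch-Lipschitz parameters; you instead truncate the metric explicitly to depth $k_0$, produce the quantitative Lipschitz bound $O(d^{k_0}/N)$, and run Azuma on the configuration-model exposure martingale by hand. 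Your version is more self-contained and makes visible the $O(1/N)$ rate that is needed (and implicitly present) for the cited concentration result to yield an $o(1)$ fluctuation.

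One small imprecision: saying that the fixed metrization $m$ is ``uniformly equivalent'' to the canonical metric $\sum_k 2^{-k} d_k$ overstates what compactness gives. Two metrics on a compact space inducing the same topology need not satisfy a pointwise bound $|m-m'|\le C$; what you do get is uniform continuity of the identity in both directions, i.e.\ for every $\varepsilon$ there is $\delta$ with $m'(\mu,\nu)<\delta\Rightarrow m(\mu,\nu)<\varepsilon$. This is enough, but the deduction $|Z-Z^{(k_0)}|\le\varepsilon/3$ should go through that modulus of continuity rather than through a direct inequality between $m$ and $m^{(k_0)}$. Alternatively, since the downstream use of the lemma (in Lemma~\ref{lem:typgraph}) is insensitive to the particular metrization, you may simply declare $m$ to be your explicit metric from the outset.
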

\begin{proof}The proof relies on a certain continuity property of the metric $m^*$ with respect to small changes in a graph.  More precisely, we show that for every $\varepsilon_2$ if $N$ is large enough, then for every pair $G, G'$ of $d$-regular graphs on the vertex set $[N]$ satisfying $|E(G)\Delta E(G')|\leq 4$ we have that $m^*(G, G')<\varepsilon_2$. The significance of the number $4$ comes from the fact that $d$-regular graphs can be transformed into each other through a sequence of operations in which two independent edges $(u_1, v_1), (u_2, v_2)$ are replaced by $(u_1, v_2), (u_2, v_1)$. To prove the continuity property, we show that if $N$ is large enough, then the inequality 
\[m(\mathrm{distr}^*(f, G), \mathrm{distr}^*(f, G'))<\varepsilon_2\]
holds for every function $f: [N]\rightarrow \mathbb{R}$ with $\mathbb{E}(f^2)\leq 1$. Let $k$ be an arbitrary integer. Observe that the marginal distribution $\mathrm{distr}^*(f, G)|_{B_k(o)}$ can be obtained from the distribution of $\mathbb{R}$-colored neighborhoods of radius $k$ of a random vertex in $G$ (and the analogous statement holds for $G'$). Since $|E(G)\Delta E(G')|$ intersects such a neighborhood with probability tending to 0 as $|V(G)|\rightarrow \infty$, we have that the distance between $\mathrm{distr}^*(f, G)|_{B_k(o)}$ and $\mathrm{distr}^*(f, G')|_{B_k(o)}$ converges to 0 in any metrization of the weak topology of $\mathbb{R}^{B_k(o)}$. This implies the desired continuity property. 

We obtain by the above statement and the triangle inequality that if $|E(G)\Delta E(G')|\leq 4$ and $N$  is large enough, then $|d_H(S(G), T)-d_H(S(G'), T)|<\varepsilon_2$.
It is well-known that  graph parameters on random $d$-regular graphs that satisfy this Lipschitz property  are concentrated around their mean; see \cite[Theorem 2.19]{wormald}. 
\end{proof} \hfill $\square$

\begin{lemma}\label{lem:typgraph}
For every $\varepsilon>0$ there exists $n(\varepsilon)$ such that for every $N>n(\varepsilon)$ with probability at least $1-\varepsilon$ a random $d$-regular graph on $N$ vertices is $\varepsilon$-typical.
\end{lemma}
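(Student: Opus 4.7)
The plan is to combine the concentration statement of Lemma \ref{lem:lip} with the compactness of $(\mathcal{T},d_H)$ via a finite net argument, and then convert a joint two-sample bound into typicality of a single graph through Markov's inequality. Concretely, fix $\varepsilon>0$ and choose an auxiliary parameter $\delta>0$ much smaller than $\varepsilon$ (roughly $\delta\sim\varepsilon^{2}$; the exact value is pinned down at the end). Since $(\mathcal{T},d_H)$ is compact, it admits a finite $\delta$-net $T_1,\dots,T_M$, where $M=M(\delta)$ depends only on $\delta$.

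Next, apply Lemma \ref{lem:lip} to each $T_i$ with parameter $\delta/M$ in place of $\varepsilon$: for all $N$ exceeding some threshold $n(\delta,M)$ there exist constants $c_1,\dots,c_M$ such that for a random $d$-regular graph $G$ on $N$ vertices the event $E_i=\{|d_H(S(G),T_i)-c_i|<\delta/M\}$ has probability at least $1-\delta/M$. A union bound gives $\mathbb{P}\bigl[\bigcap_{i=1}^M E_i\bigr]\geq 1-\delta$. Now let $G,G'$ be independent random $d$-regular graphs on $N$ vertices, both of which satisfy every $E_i$; this joint event has probability at least $1-2\delta$. On this event, choose any index $i$ with $d_H(S(G),T_i)<\delta$ (possible by the net property). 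Then $c_i<2\delta$, so $d_H(S(G'),T_i)<c_i+\delta/M<3\delta$, and by the triangle inequality
\[
m^{*}(G,G')=d_H(S(G),S(G'))\leq d_H(S(G),T_i)+d_H(T_i,S(G'))<4\delta.
\]
Therefore, for $\delta\leq \varepsilon/4$, we obtain $\mathbb{P}_{G,G'}[m^{*}(G,G')<\varepsilon]\geq 1-2\delta$ once $N$ is large enough.

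Finally, define $p(G)=\mathbb{P}_{G'}[m^{*}(G,G')\geq \varepsilon\mid G]$ for a random $G$. The two-sample bound just established says $\mathbb{E}_G[p(G)]\leq 2\delta$. By Markov's inequality, $\mathbb{P}_G[p(G)\geq \varepsilon]\leq 2\delta/\varepsilon$. Choosing $\delta=\min(\varepsilon/4,\varepsilon^{2}/2)$ yields $\mathbb{P}_G[p(G)\geq \varepsilon]\leq \varepsilon$, which is exactly the statement that $G$ is $\varepsilon$-typical with probability at least $1-\varepsilon$, provided $N$ exceeds the threshold from Lemma \ref{lem:lip} applied with tolerance $\delta/M(\delta)$.

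The argument is essentially routine once one accepts Lemma \ref{lem:lip}; the main places requiring care are the bookkeeping of the two roles of $\varepsilon$ in Lemma \ref{lem:lip} (it simultaneously controls the deviation and the failure probability, so one must apply it with a suitably small parameter depending on $M$), and the verification that compactness of $(\mathcal{T},d_H)$ indeed yields finite $\delta$-nets. No genuinely hard step appears: the heart of the proof — the Lipschitz-and-switch concentration for $d_H(S(G),T)$ — is already contained in Lemma \ref{lem:lip}, and the present lemma only upgrades it to a two-sample statement and then to typicality.
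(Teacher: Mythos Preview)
Your proof is correct and follows essentially the same approach as the paper: a finite net in the compact space $(\mathcal{T},d_H)$ combined with the concentration of Lemma \ref{lem:lip} and the triangle inequality. The only differences are cosmetic: the paper singles out one good net point by pigeonhole (some $T$ must satisfy $\mathbb{P}(d_H(S(G),T)<\varepsilon/2)\geq 1/|M|$) and applies Lemma \ref{lem:lip} once, whereas you union-bound over all net points; also, your final Markov step is avoidable, since once $G$ lies in $\bigcap_i E_i$ and $T_i$ is chosen with $d_H(S(G),T_i)<\delta$, the deterministic bound $c_i<2\delta$ already gives $\mathbb{P}_{G'}[m^*(G,G')\geq\varepsilon]\leq\delta/M$ directly.
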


\begin{proof}Let $M$ be a finite $\varepsilon/2$-net in $\mathcal T$. If $G$ is a random $d$-regular graph on $N$ vertices, then there exists $T\in M$ with the property that 
\begin{equation}\label{eq:ph}\mathbb P(d_H(S(G), T)<\varepsilon/2)\geq 1/|M|. \end{equation}
We apply Lemma \ref{lem:lip} with $\varepsilon'\leq \varepsilon/4$. Combining inequalities \eqref{eq:dh} and \eqref{eq:ph}, we obtain that for $\varepsilon'<1/|M|$ and $N$ large enough $|c|\leq 3/4\varepsilon$. Then applying \eqref{eq:dh} again, the proof is complete.
\end{proof} \hfill $\square$

Now we enter the proof of Lemma \ref{lem:equiv}.

\begin{proof} We go by contradiction. If Theorem \ref{main} fails then there is a growing sequence of natural numbers $\{n_i\}_{i=1}^\infty$, $\varepsilon>0$  and a sequence $\{\delta_i\}_{i=1}^\infty$ with $\lim_{i\to\infty}\delta_i=0$ such that the following holds. If $G$ is a random $d$-regular graph on $n_i$ vertices then we have with probability at least $\varepsilon$ that there is an $\delta_i$-almost eigenvector $v$ of $G$ (with entry sum $0$) such that ${\rm distr}^*(v,G)$ is at least $\varepsilon$-separated from any Gaussian wave in the weak topology. 

From Lemma \ref{lem:typgraph} we obtain that there is a sequence $\{\varepsilon'_i\}_{i=1}^\infty$ with $\lim_{i\rightarrow \infty} \varepsilon_i'=0$ such that a random $d$-regular graph on $n_i$ vertices is $\varepsilon_i'$-typical with probability at least $1-\varepsilon_i'$ for every $i$. There exists an index $j$ such that for all $i\geq j$ we have $\varepsilon_i'<\varepsilon$. This implies that for all $i\geq j$ we can choose a graph $G_i$ on $n_i$ vertices such that $G_i$ is $\varepsilon_i'$-typical and there exists a $\delta_i$-almost eigenvector $f_i$ of $G_i$ (with entry sum 0) satisfying that ${\rm distr}^*(\sqrt{n_i}f_i, G_i)$ is at least $\varepsilon$-separated from any Gaussian wave in the weak topology. 

By choosing a subsequence we can assume (by abusing the notation) that ${\rm distr}^*(\sqrt{n_i}f_i,G_i)$ weakly converges to some measure $\mu\in\mathcal{P}$. It is clear that $\mu$ is a nontrivial eigenvector process which is at least $\varepsilon$-separated from any Gaussian wave in the weak topology. To get a contradiction it remains to show that $\mu$ is typical.

Again by choosing a subsequence we can assume that $\sum_{i=1}^\infty\varepsilon'_i<\infty$. Let $\{G'_i\}_{i=1}^\infty$ be such that $G_i'$ is a random $d$-regular graph on $n_i$ vertices and the terms of the sequence are independent. It follows from the Borel--Cantelli lemma that almost surely all but finitely many indices $i$ satisfy that $m^*(G_i',G_i)\leq\varepsilon'_i$. For such indices we can find $f_i'$ with $m({\rm distr}^*(f_i',G'_i),{\rm distr}^*(\sqrt{n_i}f_i,G_i))\leq\varepsilon'_i$. We obtain that ${\rm distr}^*(f_i',G_i')$ converges to $\mu$ showing that $\mu$ is typical.
\end{proof} \hfill $\square$

\section{Entropy inequality for typical processes}

\label{chap:entr}
Let $X$ be a separable metric space; let $F$ be a finite set and let $\mathcal{P}(F)$ denote the set of probability distributions on $F$ equipped with the topology generated by total variation distance. A continuous discretization of $X$ is a continuous function $\phi:X\rightarrow\mathcal{P}(F)$.
If $\alpha\in X^V$ is an $X$-coloring of a finite or countable set $V$, then we denote by $\phi*\alpha$ the probability distribution on $F^V$ obtained by independently choosing an element from $F$ for each $v\in V$ with distribution $\phi(\alpha(v))$.
If $\mu$ is a probability distribution on $X^V$, then we denote by $\phi*\mu$ the probability distribution obtained by first taking a $\mu$ random element $\alpha:V\rightarrow X$ and then in a second round of randomization we take $\phi*\alpha$.
The main result of this chapter is the next entropy inequality for typical processes.

\begin{theorem}\label{discmain} If $\mu\in I_d(X)$ is a typical process and $\phi:X\rightarrow\mathcal{P}(F)$ is a continuous discretization, then the process $\phi*\mu$ satisfies the next entropy inequality.
$$\mathbb{H}(B_k(C))-(d/2)\mathbb{H}(B_k(e))\geq\mathbb{E}_{\mu_o}(\mathbb{H}(\phi(x))).$$
\end{theorem}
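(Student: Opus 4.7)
The plan is to pull the tree-level inequality back to large random $d$-regular graphs via typicality and then extract it from a mixture-entropy decomposition of the local tree marginals of the discretised coloring.

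First, fix random $d$-regular graphs $G_i$ on $n_i=|V(G_i)|\to\infty$ vertices together with maps $f_i:V(G_i)\to X$ such that $\mathrm{distr}^*(f_i,G_i)\to\mu$ weakly, almost surely; this is possible by typicality. Restrict to the almost-sure event on which $\mathrm{girth}(G_i)>2(k+1)$ for all large $i$, so every $(k{+}1)$-ball at a vertex (and every $k$-ball at an edge) of $G_i$ is isomorphic to its tree counterpart. Form the random discretised coloring $Y_i:V(G_i)\to F$ by independently sampling $Y_i(v)\sim\phi(f_i(v))$. Continuity of $\phi$ together with the definition of $\mathrm{distr}^*$ yields $\mathbb{E}[\mathrm{distr}^*(Y_i,G_i)]\to\phi\ast\mu$ in the weak topology; since $F$ is finite, the local-ball entropies converge to $\mathbb{H}(B_k(C))$ and $\mathbb{H}(B_k(e))$.

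Next, I would use a mixture-entropy decomposition. Because of the large girth, the tree marginal $\mathbb{E}[\mathrm{distr}^*(Y_i,G_i)]|_{B_{k+1}(o)}$ is exactly the uniform mixture, over a uniformly chosen root $v_0\in V(G_i)$ and over the independent $\phi$-noise, of the product measure $\bigotimes_{w\in B_{k+1}(v_0)}\phi(f_i(w))$. The mixture-entropy identity $H(\mathrm{mixture})=\mathbb{E}[H(\mathrm{component})]+I(\mathrm{index};\mathrm{sample})$, combined with the double-counting facts that every vertex lies in exactly $|B_{k+1}(o)|$ vertex-balls and in exactly $|B_{k+1}(o)|-1=(d/2)|B_k(e)|$ edge-balls (when $\mathrm{girth}(G_i)>2(k+1)$), yields in the limit
\[
\mathbb{H}(B_k(C))=|B_{k+1}(o)|\,\mathbb{E}_{\mu_o}\mathbb{H}(\phi(x))+I_C,\qquad \mathbb{H}(B_k(e))=|B_k(e)|\,\mathbb{E}_{\mu_o}\mathbb{H}(\phi(x))+I_E,
\]
where $I_C,I_E\geq 0$ are the limiting mixture corrections. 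The arithmetic identity $|B_{k+1}(o)|-(d/2)|B_k(e)|=1$ then reduces the theorem to the single mutual-information inequality
\[
I_C\geq \tfrac{d}{2}\,I_E.
\]

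The main obstacle is this last inequality. I would attack it by viewing $F\mapsto \mathbb{H}_{\phi\ast\mu}(F)-|F|\,\mathbb{E}_{\mu_o}\mathbb{H}(\phi(x))$ as a nonnegative, submodular set function on subsets of $V_d$ (submodularity of entropy plus modularity of cardinality), and exploiting the tree-specific intersection identity $B_{k+1}(u)\cap B_{k+1}(v)=B_k(e)$ for an edge $e=uv$ together with the $\mathrm{Aut}(T_d)$-invariance of $\mu$. Here the factor $d/2$ is exactly the ratio of edges to vertices in a $d$-regular graph, accounting for each edge being incident to two vertices. Naïve data-processing alone gives only $I_C\geq I_E$, and Shearer's lemma gives the opposite bound $I_C\leq d\,I_E$; obtaining the sharp factor $d/2$ requires leveraging the combined combinatorial and probabilistic structure, and is where typicality must enter in an essential way (non-typical processes such as strongly correlated tree-Markov chains on $T_d$ would otherwise violate the inequality).
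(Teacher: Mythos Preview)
Your reduction to the inequality $I_C\geq (d/2)I_E$ is formally correct (the arithmetic identity $|B_k(C)|-(d/2)|B_k(e)|=1$ does exactly what you say), and your diagnosis that this step cannot follow from submodularity or Shearer-type bounds alone is also correct: as you observe, the perfectly correlated process $X_v\equiv X_o$ gives $I_C\approx I_E\approx H(X_o)$ for large $k$, so $I_C-(d/2)I_E<0$. But this is precisely where your proof stops. You identify that typicality must enter ``in an essential way'' and then do not say how. Everything you actually wrote down---the mixture identity, submodularity, the ball-intersection relation $B_{k+1}(u)\cap B_{k+1}(v)=B_k(e)$---holds for an arbitrary invariant process, so none of it can close the gap. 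The proposal is therefore incomplete at the one step that carries all the content. (A secondary issue: random $d$-regular graphs almost surely contain short cycles, so there is no almost-sure event $\{\mathrm{girth}(G_i)>2(k+1)\}$; one must work with essentially large girth and control the exceptional vertices.)

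The paper's argument is structurally quite different and bypasses the inequality $I_C\geq(d/2)I_E$ entirely. It is a counting argument. Lemma~\ref{countcolor} (using a formula from \cite{BSz}) shows that for any $F$-valued invariant process $\nu$, the number $N(n,\varepsilon)$ of $F$-colored $d$-regular graphs on $[n]$ that are $\varepsilon$-close to $\nu$ satisfies
\[
\limsup_{n}\;\frac{1}{n}\log\frac{N(n,\varepsilon)}{R(d,n)}\;\leq\;\mathbb{H}_\nu(B_k(C))-\tfrac{d}{2}\,\mathbb{H}_\nu(B_k(e)),
\]
the combination on the right arising because a $d$-regular graph on $n$ vertices has $n$ stars and $dn/2$ edges. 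Typicality is then used for a \emph{lower} bound on the same count with $\nu=\phi*\mu$: almost every random $d$-regular graph $G$ carries an $X$-coloring $\alpha$ close to $\mu$, and the independent randomization $\phi*\alpha$ produces (via Proposition~\ref{discapprox}) roughly $\exp\bigl(\sum_{v}\mathbb{H}(\phi(\alpha(v)))\bigr)\approx\exp\bigl(n\,\mathbb{E}_{\mu_o}\mathbb{H}(\phi(x))\bigr)$ distinct $F$-colorings of $G$ close to $\phi*\mu$. Comparing the two bounds gives the theorem. So the factor $d/2$ is not extracted from an entropy inequality on the tree at all; it is the edge-to-vertex ratio appearing in the annealed count of colored random regular graphs, and typicality enters as the statement that this count is not vacuous.
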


Before proving Theorem \ref{discmain} we need some preparation.  Let us fix a metrization of the weak topology on $I_d(X)$. For a finite $d$-regular graph $G$ let $\omega(G)$ denote the infimum of the numbers $\varepsilon>0$ for which it is true that at least $1-\varepsilon$ fraction of the vertices of $G$ are not contained in a cycle of length at most $\lfloor 1/\varepsilon\rfloor$. The quantity $\omega(G)$ measures how similar the graph $G$ is to the tree $T_d$ in the Benjamini--Schramm metric. Throughout this chapter, $G$ is always assumed to be finite.

The operator ${\rm distr}^*$ maps $X$-colored $d$-regular graphs $(\alpha\in X^{V(G)},G)$ to invariant processes in $I_d(X)$. Using this correspondence and the metric on $I_d(X)$ we define the distance of an $X$-colored graph $(\alpha,G)$ and a process $\mu\in I_d(X)$ as $\omega(G)$ plus the distance of ${\rm distr}^*(\alpha,G)$ and $\mu$ in $I_d$. Note that if $(\alpha\in X^{V(G)},G)$ is an $X$-colored $d$-regular graph, then $(\phi*\alpha,G)$ is a probability distribution on $F$-valued colorings of the vertices of $G$. In this case ${\rm distr}^*(\phi*\alpha,G)$ is a probability distribution on $I_d(X)$, while $\phi*{\rm distr}^*(\alpha,G)$ is a single element in $I_d(X)$.

\begin{proposition}\label{discapprox} Let $\mu\in I_d(X)$ be an invariant process and $\phi$ is a continuous discretization of $X$. Then for every $\varepsilon>0$ there is $\delta>0$ such that if a colored $d$-regular graph $(\alpha\in X^{V(G)},G)$ is at most of distance $\delta$ from $\mu$, then with probability at least $1-\varepsilon$ we have that  $(\phi*\alpha,G)$ is at most of distance $\varepsilon$ from $\phi*\mu$.
\end{proposition}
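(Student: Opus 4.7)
The plan is to reduce the statement to closeness of finite-dimensional marginals and then split the comparison into a ``mean'' step and a ``concentration'' step. The metric on $I_d(F)$ metrizes weak topology, which is generated by the finite-dimensional marginals on balls $B_k(C)\subseteq V_d$. So first, given $\varepsilon$, I fix $k$ large enough that closeness of the $B_k(C)$-marginals in total variation by some threshold guarantees $\varepsilon$-closeness in $I_d(F)$. Write $B=B_k(C)$, and note that since $F$ is finite, closeness on $\mathcal{P}(F^B)$ is just closeness in TV.

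Mean step: for any $\nu\in I_d(X)$ and $y\in F^B$,
\[(\phi*\nu)_B(y)=\int_{X^B}\prod_{v\in B}\phi(x_v)(y_v)\,d\nu_B(x),\]
and the integrand is bounded in $[0,1]$ and continuous in $x$ because $\phi:X\to\mathcal{P}(F)$ is continuous and $F$ is finite. Hence $(\phi*\nu)_B$ depends weakly continuously on $\nu_B$. Since $(\alpha,G)$ is at distance at most $\delta$ from $\mu$, the marginals of ${\rm distr}^*(\alpha,G)$ and $\mu$ on $B$ are weakly close on $\mathcal{P}(X^B)$, so $(\phi*{\rm distr}^*(\alpha,G))_B$ is TV-close to $(\phi*\mu)_B$, uniformly once $\delta$ is small enough given the (now fixed) $k$.

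Concentration step: given $(\alpha,G)$, the random coloring $\phi*\alpha$ is by construction independent across vertices of $V(G)$. Taking the expectation of $({\rm distr}^*(\phi*\alpha,G))_B$ over this randomness yields exactly $(\phi*{\rm distr}^*(\alpha,G))_B$, since ${\rm distr}^*$ commutes with the two-stage sampling. For fixed $y\in F^B$, write $g_v$ for the conditional probability given $\phi*\alpha$ that a uniformly random local cover rooted at $v$ reads $y$ on $B$; then
\[({\rm distr}^*(\phi*\alpha,G))_B(y)=\frac{1}{|V(G)|}\sum_{v\in V(G)}g_v,\]
each $g_v\in[0,1]$ depends only on $\phi*\alpha|_{B_k(v)}$, and $g_v,g_u$ are independent whenever $B_k(v)\cap B_k(u)=\emptyset$. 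A standard covariance count bounds the variance by $d(d-1)^{2k-1}/|V(G)|$. The assumption $\omega(G)\leq\delta$ forces $|V(G)|\geq d(d-1)^{\lfloor 1/(2\delta)\rfloor-1}$, since a $(1-\delta)$-fraction of vertices possess tree-like radius-$\lfloor 1/(2\delta)\rfloor$ neighborhoods; so $|V(G)|$ can be driven up by shrinking $\delta$. Chebyshev and a union bound over the finite set $F^B$ then give the required high-probability statement.

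The main obstacle is coordinating the parameters in the correct order. Given $\varepsilon$, one first fixes $k$ large (from the reduction to marginals), which fixes a threshold $\eta$; then $\delta$ must be chosen small enough to simultaneously ensure (i) the mean step has TV-error $\ll \eta$ via weak continuity of $(\phi*\cdot)_B$, and (ii) the concentration step has variance $\ll \eta^2/|F^B|$ via the lower bound on $|V(G)|$ coming from $\omega(G)\leq\delta$. Both are unconditionally available once $\delta$ is small enough, but the nontrivial bookkeeping lies in checking that the continuity modulus of $\phi$ on $X^B$ does not blow up faster than $\delta$ can shrink it.
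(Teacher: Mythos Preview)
Your approach is essentially the same as the paper's: both decompose into a mean step (continuity of $\nu_B\mapsto(\phi*\nu)_B$ as a functional of the $X^B$-marginal) and a concentration step (bounded-range dependence of $\phi*\alpha$ across $V(G)$ yielding a variance bound of order $|B_{2k}(o)|/|V(G)|$), and both use $\omega(G)\le\delta$ to push $|V(G)|\to\infty$. The paper packages this as Lemma~\ref{discapproxl}, stated for general automorphism-invariant test functions $\beta:F^B\to\mathbb{R}$ and averaged only over the set $W$ of vertices with tree-like $r$-neighborhood, then specializes to $\{0,1\}$-valued $\beta$.

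One inaccuracy in your concentration step: the claim that $\mathbb{E}_\gamma\bigl[({\rm distr}^*(\gamma,G))_B\bigr]=(\phi*{\rm distr}^*(\alpha,G))_B$ exactly is false. If the random cover $\psi$ is not injective on $B$ (i.e., $\psi(u)=\psi(u')$ for some $u\neq u'$ in $B$), then on the left side $\gamma\circ\psi$ is forced to take the \emph{same} value at $u$ and $u'$, whereas on the right side $\phi*$ resamples them independently. The discrepancy on the $B$-marginal is bounded by the probability that $\psi|_B$ fails to be injective, which is at most the fraction of vertices whose $r$-ball is not a tree; this is $\le\delta$ once $\delta$ is taken small relative to the already-fixed $k$. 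The paper avoids this issue by summing only over $v\in W$ from the outset. With this correction (add an $O(\omega(G))$ error term or restrict to $W$), your argument goes through.
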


The proof of Proposition \ref{discapprox} relies on the next technical lemma.

\begin{lemma}\label{discapproxl} Let $\mu\in I_d(X)$ be an invariant process. Let $r\in\mathbb{N}$,  $B=B_r(o)\subset V_d$ and let $\beta:F^B\rightarrow\mathbb{R}$ be any automorphism invariant function. Then for every $\varepsilon>0$ there is $\delta>0$ such that if a colored $d$-regular graph $(\alpha\in X^{V(G)},G)$ is at most of distance $\delta$ from $\mu$ then with probability at least $1-\varepsilon$ we have that 
$$|W|^{-1}\sum_{v\in W} \beta(\gamma|_{B_r(v)})$$ is at most $\varepsilon$-far from $\mathbb{E}(\beta(\phi*\mu_B))$ where $\gamma=\phi*\alpha$ and $W=\{v: v\in V(G), B_r(v)\simeq B\}$.
\end{lemma}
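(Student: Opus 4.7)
The plan is to split the estimate of $|W|^{-1}\sum_{v\in W}\beta(\gamma|_{B_r(v)})$ into a deterministic conditional expectation (given $\alpha$) and a concentration estimate for the randomness in $\gamma = \phi*\alpha$. First, I would introduce $\tilde\beta\colon X^B \to \mathbb{R}$ defined by
$$\tilde\beta(x) = \sum_{f \in F^B} \beta(f) \prod_{u\in B} \phi(x(u))(f(u)),$$
which is the conditional expectation of $\beta(\gamma|_{B_r(v)})$ given $\alpha|_{B_r(v)} = x$. Continuity of $\phi$ into $\mathcal{P}(F)$ and finiteness of $F^B$ make $\tilde\beta$ bounded and continuous on $X^B$; automorphism invariance of $\beta$ is inherited by $\tilde\beta$, so $\tilde\beta(\alpha|_{B_r(v)})$ is well-defined for each $v \in W$ independently of the choice of rooted isomorphism $B_r(v) \simeq B$.

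Second, I would match the empirical average $|W|^{-1}\sum_{v\in W}\tilde\beta(\alpha|_{B_r(v)})$ with $\mathbb{E}_{\mu_B}(\tilde\beta)$. The function $x \mapsto \tilde\beta(x|_B)$ is a bounded continuous functional on $X^{V_d}$, so weak closeness of ${\rm distr}^*(\alpha,G)$ to $\mu$ (built into the distance hypothesis) forces
$$\left|\int \tilde\beta(x|_B)\, d\,{\rm distr}^*(\alpha,G)(x) - \mathbb{E}[\beta(\phi*\mu_B)]\right| \longrightarrow 0$$
as $\delta\to 0$. Unfolding the definition of ${\rm distr}^*$, the integral on the left equals $|V(G)|^{-1}\sum_{v \in V(G)} \mathbb{E}_{\phi}[\tilde\beta(\alpha \circ \phi|_B)]$, where $\phi$ is a random covering map with root image $v$. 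For $v \in W$ the restriction $\phi|_B$ is a deterministic bijection onto $B_r(v)$, so the inner expectation collapses to $\tilde\beta(\alpha|_{B_r(v)})$; for $v \notin W$ it is bounded by $\|\beta\|_\infty$. Since $|V(G)\setminus W|/|V(G)| \le \omega(G)$ once $\omega(G) < 1/(2r)$, the two averages differ by $O(\|\beta\|_\infty\, \omega(G)) = O(\delta)$.

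Third, I would establish concentration around the conditional expectation via a second-moment computation. Conditional on $\alpha$, the random colors $\gamma(u) = (\phi*\alpha)(u)$ are independent across $u \in V(G)$, so
$$\mathrm{Cov}\bigl[\beta(\gamma|_{B_r(v)}),\, \beta(\gamma|_{B_r(w)}) \mid \alpha\bigr] = 0$$
whenever $B_r(v) \cap B_r(w) = \emptyset$, and is bounded in absolute value by $\|\beta\|_\infty^2$ otherwise. The number of pairs $(v,w) \in W^2$ with overlapping $r$-balls is at most $|W|\cdot|B_{2r}(o)|$, hence the conditional variance of $|W|^{-1}\sum_{v\in W}\beta(\gamma|_{B_r(v)})$ is at most $\|\beta\|_\infty^2 |B_{2r}(o)|/|W|$. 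A Chebyshev bound then yields deviation less than $\varepsilon/2$ with probability at least $1-\varepsilon$ once $|W|$ is large, which is forced by $\delta$ being small (since the distance hypothesis controls $\omega(G)$, and implicitly $|V(G)|$ via the convergence of ${\rm distr}^*$).

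The most delicate step is the second one: reconciling the graph-theoretic empirical average indexed by $W$ with the tree-theoretic expectation against $\mu_B$ through the intermediate measure ${\rm distr}^*(\alpha,G)$. The automorphism invariance of $\beta$ is precisely what permits this identification, since otherwise $\tilde\beta(\alpha|_{B_r(v)})$ would depend on an arbitrary choice of isomorphism. The remainder of that step is bookkeeping against $\omega(G)$, and the third step is a routine variance/Chebyshev argument; combining them via the triangle inequality closes the proof.
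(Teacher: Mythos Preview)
Your proposal is correct and follows essentially the same route as the paper: your $\tilde\beta$ is exactly the paper's function $g(h)=\mathbb{E}(\beta(\phi*h))$, and both arguments control the mean via weak convergence of the empirical $r$-neighborhood statistics to $\mu_B$ (using continuity of $g$ and the $\omega(G)$ correction for vertices outside $W$) and then conclude by a variance bound of order $|B_{2r}(o)|\|\beta\|_\infty^2/|W|$ together with Chebyshev. The only cosmetic difference is that the paper phrases the mean step in terms of an auxiliary measure $\nu_G$ on isomorphism classes rather than unfolding ${\rm distr}^*(\alpha,G)$ as you do; note also that $|W|\to\infty$ is forced by $\omega(G)\to 0$ (a $d$-regular graph with small $\omega$ must have many vertices), not by the weak closeness of ${\rm distr}^*$ alone.
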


\begin{proof} Assume first that $(\alpha\in X^{V(G)},G)$ is an arbitrary $d$-regular $X$-colored graph whose distance is $\delta'$ from $\mu$ and let $\gamma,W$ as in the statement of the lemma. For a vertex $v\in W$ let $Y_v$ be the random variable with value $\beta(\gamma|_{B_r(v)})$.  Let $g: X^B\rightarrow\mathbb{R}$ be the function defined by $g(h)=\mathbb{E}(\beta(\phi*h))$. We have for $v\in W$ that $\mathbb{E}(Y_v)$ is equal to $g(\alpha|_{B_r(v)})$. Let $Y=|W|^{-1}\sum_{v\in W}Y_v$. It follows that 
\begin{equation}\label{tpei1} \mathbb{E}(Y)=|W|^{-1}\sum_{v\in W} g(\alpha|_{B_r(v)})=\int_{X^B}g~ d\nu_G,
\end{equation}
where $\nu_G$ describes the probability distribution of the isomorphism classes of $\alpha|_{B_r(v)}$ where $v\in W$ is a uniform random point.  Using the fact that $g$ is a continuous function we obtain that if $\delta'$ is small enough then the right hand side of (\ref{tpei1}) is at most $\varepsilon/2$ far from $\int_{X^B}~g~d\mu_B=\mathbb{E}(\beta(\phi*\mu_B))$.  Observe that $Y_v$ and $Y_w$ are independent if $v$ and $w$ have distance at least $2r+1$ in $G$. It follows that there are at most $|B_{2r+1}(o)||W|$ correlated pairs in $\{Y_v\}_{v\in W}$.  We obtain that the variance of $Y$ is at most $|B_{2r+1}(o)|^{1/2}|W|^{-1/2}\max |\beta|$. We use that $\omega(G)$ goes to $0$ as $\delta'$ goes to $0$ and thus $|W|$ tends to infinity. This implies that if $\delta'$ is sufficiently small then the variance of $Y$ is at most $\varepsilon^2/3$. Now by Chebyshev's inequality we have that $\mathbb{P}(|Y-\mathbb{E}(Y)|\geq \varepsilon/2)\leq \varepsilon^2.$ It follows that $\mathbb{P}(|Y-\mathbb{E}(\beta(\phi*\mu_B))|\geq\varepsilon)\leq \varepsilon^2$ which completes the proof. \hfill $\square$
\end{proof}

We continue with the proof of Proposition \ref{discapprox}.

\begin{proof} Let $\delta'$ be an arbitrary positive number. Let $(\alpha\in X^{V(G)},G)$ be an $X$-colored $d$-regular graph of distance $\delta'$ from $\mu$.  Let $(\gamma\in F^{V(G)},G)$ be chosen according to the probability distribution $(\phi*\alpha,G)$. Let $\varepsilon'>0$, $r=\lfloor 1/\varepsilon'\rfloor$ and  $W=\{v:v\in V(G), B_r(v)\simeq B\}$. It follows from Lemma \ref{discapproxl} that there is $c=c(\varepsilon')>0$ such that if $\delta'<c$ then the condition of Lemma \ref{discapproxl} holds for $(\gamma,G)$ simultaneously for every $0-1$ valued $\beta$ with probability at least $1-\varepsilon'$. (Here we use the fact that there are finitely many such functions $\beta$.)
If $c$ is small enough, then it also guarantees that $|W|/|V(G)|\geq 1-\varepsilon'$. Now it is clear that if $\varepsilon'$ is small enough, then these properties imply that $(\gamma,G)$ is at most $\varepsilon$ far from $\phi*\mu$.
\end{proof} \hfill $\square$

\medskip

For the next lemma let $R(d,n)$ denote the number of $d$-regular graphs on the vertex set $[n]$. In case $d$ is odd we will always assume that the number of vertices is even.

\begin{lemma}\label{countcolor} Let $F$ be a finite set and $\mu\in I_d(F)$.  Let $N(n,\varepsilon)$ denote the number of $d$-regular $F$-colored graphs on the vertex set $[n]$ whose distance from $\mu$ is at most $\varepsilon$. Assume that $\{n_i\}_{i=1}^\infty$ is a growing sequence of natural numbers. Then for every $\varepsilon>0$ and $k\in\mathbb{N}$ we have that 
\begin{equation}\label{cceq}
\mathbb{H}(B_k(C))-(d/2)\mathbb{H}(B_k(e))\geq \lim_{\varepsilon\to 0}\Bigl(\limsup_{i\to\infty} n_i^{-1}\log(N(n_i,\varepsilon)/R(d,n_i))\Bigr).
\end{equation}
\end{lemma}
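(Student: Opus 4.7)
We bound $N(n_i,\varepsilon)$ by factoring the count over the underlying graph,
\[
N(n_i,\varepsilon) \leq R(d,n_i)\cdot\max_G|M(G,\varepsilon)|,
\]
where $M(G,\varepsilon)=\{\alpha:V(G)\to F:(\alpha,G)\text{ is within }\varepsilon\text{ of }\mu\}$. Since the distance to $\mu$ incorporates $\omega(G)$, only graphs with $\omega(G)\leq\varepsilon$ contribute, and for such graphs all but an $\varepsilon$-fraction of vertices $v$ and edges $e$ have $B_{k+1}$-neighborhoods isomorphic to the balls $B_k(C)=B_{k+1}(o)$ and $B_k(e)$ of $T_d$. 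Moreover, for any $\alpha\in M(G,\varepsilon)$ the empirical distributions of $\alpha|_{B_{k+1}(v)}$ and $\alpha|_{B_k(e)}$ over tree-like $v,e$ are close in total variation to $\mu_{B_k(C)}$ and $\mu_{B_k(e)}$ respectively, with closeness tending to $0$ as $\varepsilon\to 0$.

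The heart of the proof is the entropy estimate
\[
\log|M(G,\varepsilon)| \leq n\big[\mathbb{H}(B_k(C))-(d/2)\mathbb{H}(B_k(e))\big]+o(n),
\]
valid for $G$ with $\omega(G)\leq\varepsilon$ as $\varepsilon\to 0$ and $n\to\infty$. I would prove this by viewing the uniform measure on $M(G,\varepsilon)$ as a random coloring $\alpha$ with Shannon entropy $H(\alpha)=\log|M(G,\varepsilon)|$ and decomposing $H(\alpha)$ using the tree-like structure of $G$ via a Bethe/Shearer-type inequality. Applied to the cover of $V(G)$ by star-balls $\{B_{k+1}(v)\}_{v\in V(G)}$ and using that in the tree-like region adjacent star-balls $B_{k+1}(u),B_{k+1}(v)$ intersect precisely in the edge-ball $B_k(e_{uv})$, a submodular inclusion--exclusion (or equivalently a BFS-ordered chain rule with conditioning truncated to $B_{k+1}(v_i)\cap\{v_{<i}\}$) yields the leading-order bound $H(\alpha)\leq\sum_v H(\alpha|_{B_{k+1}(v)})-\sum_e H(\alpha|_{B_k(e)})+o(n)$. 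Higher-order corrections are absorbed into $o(n)$ because triple and larger star-ball intersections concentrate on vertices in short cycles, of density $O(\varepsilon)$. Combined with the empirical closeness of the local marginals and continuity of Shannon entropy on the finite alphabet $F$, each term is matched with its $\mu$-analogue up to $o(1)$; summing over $n$ vertices and $nd/2$ edges produces the stated coefficients. Combining with the trivial bound, one obtains $N(n_i,\varepsilon)/R(d,n_i)\leq\exp(n_i[\mathbb{H}(B_k(C))-(d/2)\mathbb{H}(B_k(e))]+o(n_i))$, and taking $n_i^{-1}\log$, $\limsup_{i\to\infty}$, then $\varepsilon\to 0$ delivers \eqref{cceq}.

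\textbf{Main obstacle.} The delicate step is the entropy inequality itself --- specifically, obtaining the correct coefficient $d/2$ on the edge-ball subtraction. Naive Shearer-type arguments give only the weaker $H(\alpha)\leq n\mathbb{H}(B_k(C))/|B_k(C)|$ without any subtraction, and the $d/2$ factor emerges only by carefully double-counting each edge through the fact $B_{k+1}(u)\cap B_{k+1}(v)=B_k(e_{uv})$ for adjacent $u,v$. Rigorously controlling the higher-order correction terms in the inclusion--exclusion --- or equivalently the boundary adjustments in a BFS chain rule --- is the technical heart; it relies on the tree-likeness $\omega(G)\to 0$ together with the finiteness of the alphabet $F$ to absorb error terms into $o(n)$.
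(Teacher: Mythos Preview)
Your approach has a genuine gap at its very first step. The bound $N(n_i,\varepsilon)/R(d,n_i)\le\max_G|M(G,\varepsilon)|$ throws away the fact that for many $\mu$ only an exponentially small fraction of $d$-regular graphs admit \emph{any} coloring close to $\mu$, and this rarity is precisely where the negative term $-(d/2)\mathbb{H}(B_k(e))$ comes from. Take $F=\{0,1\}$ and let $\mu$ be the proper $2$-coloring process on $T_d$, so $\mathbb{H}(C)=\mathbb{H}(e)=\log 2$ and the left side of \eqref{cceq} equals $(1-d/2)\log 2<0$. A large-girth bipartite $d$-regular graph $G$ has $\omega(G)$ arbitrarily small and $|M(G,\varepsilon)|=2$, so your route yields only $n^{-1}\log(N/R)\le n^{-1}\log 2\to 0$, strictly weaker than the negative bound required. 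The same example kills your Bethe/Shearer inequality $H(\alpha)\le\sum_v H(\alpha|_{B_{k+1}(v)})-\sum_e H(\alpha|_{B_k(e)})+o(n)$: for the uniform law on the two proper $2$-colorings of such a $G$ the left side is $\log 2$ while the right side is $n(1-d/2)\log 2+o(n)<0$. Submodularity delivers this inequality on an actual tree, but a single long cycle breaks it by a $\Theta(n)$ amount that no $o(n)$ correction absorbs.

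The point is that the edge-entropy subtraction does not arise from an entropy inequality on a fixed graph; it arises from counting pairs $(G,\alpha)$ jointly. The paper's proof reflects this. For $k=0$ it quotes a counting formula from \cite{BSz} giving that the number of $F$-colored $d$-regular graphs on $[n]$ with prescribed star statistics is $R(d,n)\exp\bigl(n(1+o(1))\mathbb{H}(C)-\tfrac{dn}{2}(1+o(1))\mathbb{H}(e)\bigr)$; this is a configuration-model computation over all graphs, not an entropy bound at a fixed $G$. For $k>0$ the paper does \emph{not} attempt a direct entropy argument either: it recolors each vertex by the isomorphism type of its colored $k$-ball (after first adding an independent uniform label in $[m]$ to break automorphisms), applies the $k=0$ case to the resulting process on a larger alphabet, and lets $m\to\infty$ to eliminate the automorphism-group correction terms.
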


\begin{proof} First we prove the statement for $k=0$. In this case we use a formula from \cite{BSz} that approximates the number $N'(n,\varepsilon)$ of $F$-colored graphs on $[n]$ in which the statistics of colored $1$-neighborhoods is $\varepsilon$-close to $\mu_C$. We have that
$$N'(n,\varepsilon)=R(d,n)\mathbb{H}(C)^{(n(1+o(1))}\mathbb{H}(e)^{-(dn/2)(1+o(1))}$$ where $o(1)$ is a quantity which goes to $0$ when first $n\to\infty$ and then $\varepsilon\to 0$. This implies that the right hand side of (\ref{cceq})  is equal to the left hand side when $N(n_i,\varepsilon)$ is replaced by $N'(n_i,\varepsilon)$. Now we use that for every $\varepsilon>0$ there exists $\varepsilon'>0$ such that $N'(n_i,\varepsilon)\geq N(n_i,\varepsilon')$ holds for all $i$. This finishes the proof of the first part.

The idea of the proof in case of $k>0$ is to generate a new process from $\mu$ in which the color of every vertex $v\in V_d$ is replaced by the isomorphism type of the colored neighborhood of $v$ of radius $k$. The main difficulty in this approach is that the isomorphism type describes the neighborhood only up to automorphisms which leads to extra constants in the entropy formulas. To control these constants (and to eventually get rid of them) we add some extra randomness to the process.

Let us introduce new processes  $\mu_{r,m}$ on $T_d$ for every $k,m\in\mathbb{N}$. If $r=0$, then $\mu_{0,m}$ denotes the $F\times [m]$ valued process in which we generate a $\mu$-random coloring on $T_d$ and then we add a second coordinate from $[m]$ to every vertex independently and uniformly. In general $\mu_{r,m}$ denotes the process obtained from $\mu_{0,m}$ by coloring $v\in V_d$ with the isomorphism class of the coloring of $B_r(v)$ in $\mu_{0,m}$. Let $a_{r,m,k}$ denote the left hand side (resp. $b_{r,m}$ denote the right hand side) of (\ref{cceq}) evaluated for $\mu_{r,m}$ and $k$. It is easy to see that $b_{r,m}$ does not depend on $r$. On the other hand the independence of the two coordinates implies that $b_{0,m}=b_{0,1}+\log m$. All together this means that $b_{r,m}=b_{0,1}+\log m$. It is clear that $a_{0,m,k}=a_{0,1,k}+\log m$. From the case $k=0$ we have that $a_{r,m,0}\geq b_{r,m}$. We can write this as $$ a_{0,1,r}+(a_{0,m,r}-a_{0,1,r})+(a_{r,m,0}-a_{0,m,r})\geq b_{0,1}+\log m$$ and thus $a_{0,1,r}+c_{r,m}\geq b_{0,1}$ holds for every $m$ where $c_{r,m}=a_{r,m,0}-a_{0,m,r}$. Since the inequality $a_{0,1,r}\geq b_{0,1}$ is equivalent to the statement of the lemma for $k=r$ it remains to show that $\lim_{m\to\infty} c_{r,m}=0$ holds for every $r$.

Let $t_r$ denote the size of the automorphism group of the rooted $d-1$-regular tree of depth $r$.
We claim that $\mathbb{H}_{\mu_{0,m}} (B_r(C))-\mathbb{H}_{\mu_{r,m}}(C)=d\log t_r + o(1)$ and that $\mathbb{H}_{\mu_{0,m}} (B_r(e))-\mathbb{H}_{\mu_{r,m}}(e)=2\log t_r + o(1)$ as $m\to\infty$. It is clear that this claim implies $c_{r,m}=o(1)$. We show the proof of the first claim. (The proof of the second one is almost identical.)
If $m$ is large enough then in the process $\mu_{0,m}$ restricted to $B_{r+1}(o)$ all labels are different with probability converging to $1$. In such a case knowing the isomorphism classes of the colored neighborhoods of radius $r-1$ of vertices in $C$ is equivalent with knowing the colored version of $B_{r+1}(o)$ up to an isomorphism that fixes $C$. The stabilizer of $C$ in the automorphism group of $B_{r+1}(o)$ is the $d$-th power of the automorphism group of the rooted $d-1$ regular tree of depth $r$. Thus the entropy loss of $\mathbb{H}_{\mu_{r,m}}(C)$ compared to $\mathbb{H}_{\mu_{0,m}} (B_r(C))$ is converging to $d\log t_r$.  \hfill $\square$
\end{proof}

Now we arrived to the proof of Theorem \ref{discmain}.

\begin{proof} According to Lemma \ref{countcolor} it is enough to show that the process $\phi*\mu$ satisfies 
$$\lim_{\varepsilon\to 0}\Bigl(\limsup_{i\to\infty} n_i^{-1}\log(N(n_i,\varepsilon)/R(d,n))\Bigr)\geq\mathbb{E}_{\mu_o}(\mathbb{H}(\phi(x)))$$ for some growing sequence $\{n_i\}_{i=1}^\infty$ of natural numbers.
For $n\in\mathbb{N},\varepsilon>0$ let $a(n,\varepsilon)$ denote the number of $d$-regular graphs $G$ on $[n]$ with the property that there exists an $X$-coloring $\alpha$ of $[n]$ such that $(G,\alpha)$ is of distance at most $\varepsilon$ from $\mu$. The fact that $\mu$ is typical is equivalent to the fact that there is a sequence $\{n_i\}_{i=1}^\infty$ such that $\lim_{i\to\infty}a(n_i,\varepsilon)/R(d,n_i)=1$ holds for every $\varepsilon>0$. 

From Proposition \ref{discapprox} we obtain that for every $\varepsilon_2>0$ there is $\varepsilon>0$ such that if a graph $G$ has an $X$ coloring $\alpha$ of distance at most $\varepsilon$ from $\mu$, then with probability at least $1-\varepsilon_2$ we have that $(\phi*\alpha,G)$ is of distance $\varepsilon_2$ from $\phi*\mu$. It follows that $G$ has at least $\exp(\mathbb{H}(\phi*\alpha,G)+o(1))$ 
$F$-colorings of distance at most $\varepsilon_2$ from $\phi*\mu$. On the other hand, we have $|V(G)|^{-1}\mathbb{H}(\phi*\alpha,G)=|V(G)|^{-1}\sum_{v\in V(G)}\mathbb{H}(\phi(\alpha(v)))$, which converges to $\mathbb{E}_{\mu_o}(\mathbb{H}(\phi(x)))$ as $\alpha$ converges to $\mu$. 

New let $N(n,\varepsilon)$ defined as in Lemma \ref{countcolor} for the process $\phi*\mu$. From the above observations we obtain that $n_i^{-1}\log (N(n_i,\varepsilon)/R(d,n_i))$ can be estimated from below by $\mathbb{E}_{\mu_o}(\mathbb{H}(\phi(x)))-o(1)$ as $n_i$ goes to infinity. Then Lemma \ref{countcolor} finishes the proof. \hfill $\square$
\end{proof}

\section{Smooth eigenvector processes}

\label{smooth}

%Similarly to the previous case, given a probability measure $\mu$, we denote by $\mathbb D(F)$ the differential entropy of $\mu$ restricted to $F$ if $F$ is again a finite connected subset of the vertex set of $T_d$. However, if $\mu$ happens to be an eigenvector process, then its restriction to $F$ is concentrated on a subspace of dimension $\partial F$. By $\mathbb D_{\rm sp}(F, \mu)$ we mean the differential entropy of $\mu$ with respect to this subspace. Recall that a Gaussian random variable $(X_1, \ldots, X_n)$ with covariance matrix $\Sigma$ has differential entropy $1/2 \ln \big( (2\pi e)^n \det \Sigma\big)$. In the eigenvector process case, if $\Sigma$ is the covariance matrix of the process restricted to some $F$, then we have that 
%$\mathbb D_{\rm sp}(F, \mu)= 1/2 \ln \big( (2\pi e)^n \det_{\rm sp} \Sigma\big)$, where by $\det_{\rm sp} \Sigma$ we mean the product of the non-zero eigenvalues of $\Sigma$.

Let $\mu$ be an eigenvector process. If $F\subseteq  V_d$ is a finite set  then the distribution of $\mu$, when restricted to $F$, is concentrated on the subspace $W_{\lambda}(F)$ (recall Chapter \ref{chap:eigen}). We denote by $\mathbb D_{\rm sp}(F,\mu)$ the differential entropy of $\mu_F$ measured inside this subspace using the Euclidean structure inherited from $\mathbb{R}^F$. We say that $\mu$ is {\it smooth} if $\mathbb{D}_{\rm sp}(B_k(C))$ and $\mathbb{D}_{\rm sp}(B_k(e))$ are finite for every $k$. In this chapter we reduce Theorem \ref{mainlim} to smooth eigenvector processes. The reduction will rely on the following statement.

\begin{proposition}\label{prop:masodik}
Let $\{X_v\}_{v\in V_d}$ be a typical eigenvector process with eigenvalue $\lambda$ and $\{Y_v\}_{v\in V_d}$ the unique Gaussian wave $\Psi_{\lambda}$ with eigenvalue $\lambda$. Then the independent sum $\{X_v+aY_v\}_{v\in V_d}$ is smooth for $a>0$.
\end{proposition}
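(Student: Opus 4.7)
My plan is to prove both finiteness conditions defining smoothness by a standard ``convolution with a non-degenerate Gaussian produces a bounded smooth density'' argument. Fix $S \in \{B_k(C), B_k(e)\}$ and write $Z_v := X_v + aY_v$, so that $Z_S$ is a probability distribution on the subspace $W_\lambda(S) \subseteq \mathbb{R}^S$: both $X_S$ and $Y_S$ are supported on $W_\lambda(S)$ because both summand processes satisfy the eigenvector equation, and hence so is $Z_S$.

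The first key step is to check that the covariance of the Gaussian summand $aY_S$, viewed as an operator on $W_\lambda(S)$, is of full rank. Since $\mu$ is a nontrivial typical eigenvector process, Lemma \ref{lem:friedman} gives $|\lambda| \leq 2\sqrt{d-1} < d$, so Corollary \ref{cor:qsp} applies to the Gaussian wave $\Psi_\lambda$ itself and gives $\langle \supp((\Psi_\lambda)_S) \rangle_{\mathbb R} = W_\lambda(S)$. For a centered Gaussian this is equivalent to the restriction of its covariance matrix to $W_\lambda(S)$ being non-singular, so $aY_S$ admits a smooth Gaussian density $g$ with respect to Lebesgue measure on $W_\lambda(S)$, and $M := \sup_w g(w) < \infty$.

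Using independence of $X$ and $Y$, the law of $Z_S$ inside $W_\lambda(S)$ is the convolution of $\mu_{X_S}$ with $g$, so it has a density
$$p_{Z_S}(z) \;=\; \int_{W_\lambda(S)} g(z-x)\, d\mu_{X_S}(x) \;\leq\; M.$$
A density bounded by $M$ immediately produces the lower bound $\mathbb{D}_{\rm sp}(S) = -\int p_{Z_S}\log p_{Z_S} \geq -\log M > -\infty$. For the upper bound, each $Z_v$ has variance at most $(1+a)^2$, so the restriction $\Sigma$ of $\mathrm{Cov}(Z_S)$ to $W_\lambda(S)$ has bounded trace, hence bounded determinant. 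The Gaussian maximum-entropy principle on a $\dim W_\lambda(S)$-dimensional subspace then yields
$$\mathbb{D}_{\rm sp}(S) \;\leq\; \tfrac{1}{2}\dim W_\lambda(S)\log(2\pi e) + \tfrac{1}{2}\log\det \Sigma \;<\; \infty.$$
Both bounds hold for $S = B_k(C)$ and $S = B_k(e)$ uniformly, which is exactly smoothness.

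The only nontrivial ingredient is the full-rank claim for the Gaussian wave on $W_\lambda(S)$, which is precisely what Corollary \ref{cor:qsp} was set up to give once Lemma \ref{lem:friedman} rules out the degenerate eigenvalue $|\lambda|=d$; everything after that is routine differential-entropy bookkeeping done inside the fixed subspace $W_\lambda(S)$ rather than in the ambient $\mathbb R^S$. Notice also that the argument never uses typicality of $\mu$ beyond the $|\lambda|<d$ conclusion, so in fact the same proof gives smoothness of $X+aY$ for any eigenvector process $X$ with $|\lambda| < d$, independently convolved with a non-trivial Gaussian wave at the same eigenvalue.
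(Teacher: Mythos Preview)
Your proof is correct and follows essentially the same route as the paper's. The paper's version is terser because it packages your two entropy bounds (bounded density from convolution for the lower bound, Gaussian maximum-entropy for the upper bound) into a single citation of Lemma~\ref{lem:veges}, applied inside the subspace $W_\lambda(S)$ after a linear change of coordinates that turns $aY_S$ into a standard Gaussian there. You spell that lemma out in place, and you are also more explicit than the paper about why Corollary~\ref{cor:qsp} is applicable, namely by invoking Lemma~\ref{lem:friedman} to secure $|\lambda|<d$; the paper leaves that hypothesis implicit.
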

\begin{proof}
Let $S$ be one of $B_k(C)$ and $B_k(e)$. 
Both $\{X_v\}_{v\in S}$ and $\{Y_v\}_{v\in S}$ are supported on $W_{\lambda}(S)$. 
Moreover, by Corollary \ref{cor:qsp}, we obtain that the support of $\Psi_{\lambda}|_S$ is equal to $W_{\lambda}(S)$. Using Lemma \ref{lem:veges} inside the space $W_{\lambda}(S)$, we get that the differential entropy of the joint distribution $\{X_v+aY_v\}_{v\in S}$ is finite. \hfill $\square$

\end{proof}

Assume that Theorem \ref{mainlim} holds for smooth typical eigenvector processes. Using Proposition \ref{prop:masodik} we obtain that if $\{X_v\}_{v\in V_d}$ is an arbitrary typical eigenvector process then
$\{X_v+aY_v\}_{v\in V_d}$ is smooth for all $a>0$. In addition, by Proposition \ref{prop:elso} and Proposition \ref{prop:hv}, $\{X_v+aY_v\}_{v\in V_d}$ is typical.  By our assumption, we obtain  the Gaussianity of $\{X_v+aY_v\}_{v\in V_d}$ for all $a>0$.  This implies the Gaussianity of $\{X_v\}_{v\in V_d}$ by  going to $0$ with $a$. 

\medskip

\section{Entropy Inequality for typical eigenvector processes}

\label{chap:entrtyp}

In this chapter we prove the next theorem.

\begin{theorem}\label{mainineq} Let $\mu\in I_d(\mathbb{R})$ be a smooth typical eigenvector process. Then
$$\mathbb D_{\rm sp}(B_k(C))-\frac{d}{2}\mathbb D_{\rm sp}(B_k(e))\geq 0$$ holds for every $k\geq 0$ integer.
\end{theorem}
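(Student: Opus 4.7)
I deduce the inequality by applying Theorem~\ref{discmain} to a sequence of fine continuous discretizations of $\mathbb R$ and extracting the differential-entropy statement in the limit. The key is to arrange the discretizations so that the divergent $\log(1/\epsilon)$ and $\log(\sigma)$ terms that arise when one approximates a differential entropy by a discrete entropy cancel exactly between the two sides of the inequality given by Theorem~\ref{discmain}.

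For parameters $\sigma,\epsilon>0$ and truncation level $M>0$, I define $\phi_{\sigma,\epsilon,M}\colon\mathbb R\to\mathcal{P}(F)$ with finite $F=\{-\lceil M/\epsilon\rceil,\ldots,\lceil M/\epsilon\rceil\}\cup\{*\}$ by letting $\phi_{\sigma,\epsilon,M}(x)$ be the law of $\lfloor(x+\sigma G)/\epsilon\rfloor$, replaced by the overflow symbol $*$ when $|x+\sigma G|>M$, where $G\sim N(0,1)$. Convolution with a Gaussian makes $x\mapsto\mathrm{law}(x+\sigma G)$ continuous in total variation, so $\phi_{\sigma,\epsilon,M}$ is a continuous discretization. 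Theorem~\ref{discmain} then yields
\begin{equation}\label{eq:planA}
\mathbb H(\phi*\mu|_{B_k(C)})-\frac{d}{2}\mathbb H(\phi*\mu|_{B_k(e)})\geq\mathbb E_{\mu_o}(\mathbb H(\phi(x))).
\end{equation}

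Next I analyze both sides asymptotically. For $S\in\{B_k(C),B_k(e)\}$ let $X_S\sim\mu_S$, supported on $W_\lambda(S)$ of dimension $|\partial S|$ by \eqref{eq:dim}, and put $Z_S=X_S+\sigma\mathbf{G}_S$ with $\mathbf{G}_S$ an independent standard Gaussian on $\mathbb R^S$. Two standard small-parameter expansions of entropy give $\mathbb H(\lfloor Z_S/\epsilon\rfloor)=\mathbb D(Z_S)+|S|\log(1/\epsilon)+o_\epsilon(1)$ and, after decomposing $\mathbb R^S$ orthogonally as $W_\lambda(S)\oplus W_\lambda(S)^\perp$,
\[\mathbb D(Z_S)=\mathbb D_{\rm sp}(S)+(|S|-|\partial S|)\log(\sigma\sqrt{2\pi e})+o_\sigma(1).\]
The smoothness hypothesis makes $\mathbb D_{\rm sp}(S)$ finite; the tangential marginal along $W_\lambda(S)$ converges to $\mu_S$ with only $o_\sigma(1)$ entropy perturbation as $\sigma\to 0$; and the perpendicular marginal is an $(|S|-|\partial S|)$-dimensional centred Gaussian of variance $\sigma^2$. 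The same one-dimensional computation shows that the right-hand side of \eqref{eq:planA} equals $\log(\sigma\sqrt{2\pi e})+\log(1/\epsilon)+o_\epsilon(1)+o_M(1)$, with $o_M(1)$ bounding the overflow contribution and tending to $0$ as $M\to\infty$ (using $\mathbb E(X_o^2)<\infty$).

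The cancellation rests on two elementary tree identities:
\[|B_k(C)|-\frac{d}{2}|B_k(e)|=1,\qquad|\partial B_k(C)|-\frac{d}{2}|\partial B_k(e)|=0,\]
the first from $|B_k(C)|=1+d\sum_{i=0}^{k}(d-1)^i$ and $|B_k(e)|=2\sum_{i=0}^{k}(d-1)^i$, the second immediate from \eqref{eq:dim}. Substituting the asymptotics into \eqref{eq:planA}, the coefficient of $\log(1/\epsilon)$ on the left equals $|B_k(C)|-(d/2)|B_k(e)|=1$ and the coefficient of $\log(\sigma\sqrt{2\pi e})$ equals $(|B_k(C)|-|\partial B_k(C)|)-(d/2)(|B_k(e)|-|\partial B_k(e)|)=1$, exactly matching the coefficients on the right-hand side. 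After cancellation one is left with $\mathbb D_{\rm sp}(B_k(C))-(d/2)\mathbb D_{\rm sp}(B_k(e))\geq o_M(1)+o_\epsilon(1)+o_\sigma(1)$, and sending $M\to\infty$, then $\epsilon\to 0$, then $\sigma\to 0$ finishes the proof. The main technical obstacle is justifying the small-noise expansion of $\mathbb D(X_S+\sigma\mathbf{G}_S)$ on the constrained subspace $W_\lambda(S)$; smoothness of $\mu$ together with Corollary~\ref{cor:qsp} (which identifies the support of $\mu_S$ as all of $W_\lambda(S)$) are what make the standard expansion in $\mathbb R^{|S|}$ go through in this tangentially singular setting.
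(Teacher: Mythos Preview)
Your approach is essentially identical to the paper's: both apply Theorem~\ref{discmain} to a Gaussian-smoothed fine discretization (your $\phi_{\sigma,\epsilon,M}$ plays the role of the paper's $t_{\sigma,a}$ with $\epsilon=1/a$ and truncation $M\sim a$), then use precisely the two tree identities $|B_k(C)|-(d/2)|B_k(e)|=1$ and $|\partial B_k(C)|-(d/2)|\partial B_k(e)|=0$ to cancel the $\log(1/\epsilon)$ and $\log\sigma$ divergences before sending $\epsilon,\sigma\to 0$. The paper carries out the two expansions you call ``standard'' as separate technical lemmas (Lemmas~\ref{finombecsles} and~\ref{varhato}), and handles the $\sigma\to 0$ limit by first obtaining the clean inequality $\mathbb D_{\rm sp}(\mu_{B_k(C),\sigma})-(d/2)\mathbb D_{\rm sp}(\mu_{B_k(e),\sigma})\geq 0$ for each fixed $\sigma>0$ and only then letting $\sigma\to 0$, but the substance is the same.
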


To prove the above theorem we will need some preparation. 
For $a\in\mathbb{N}$ let us define the continuous discretization $t_{0,a}$ of $\mathbb{R}$ in the following way.
If $x>a$ (resp. $x<-a$), then $t_{0,a}(x)=a$ (resp. $t_{0,a}(x)=-a$) with probability $1$. Otherwise let $t_{0,a}(x)$ denote the probability distribution that takes $\lfloor x*a\rfloor/a$ with probability $1+\lfloor x*a\rfloor-x*a$ and takes $1/a+\lfloor x*a\rfloor/a$ with probability $x*a-\lfloor x*a\rfloor$. For $\sigma>0$ we define the discretization $t_{\sigma,a}$ by $t_{\sigma,a}(x)=t_{0,a}(x+\sigma N)$ where $N$ is a random variable with standard normal distribution. We denote by $t_{\sigma,a}^n$ the continuous discretization of $\mathbb{R}^n$ obtained by the coordinatewise independent application of $t_{\sigma,a}$.

\begin{lemma}\label{finombecsles} Let $X$ be a random variable with values in $\mathbb{R}^n$ with finite variance, i.e.\ $\mathbb{E}(\|X\|_2^2)<\infty$. Then we have for every fixed $\sigma>0$ that
$$\mathbb{H}(t_{\sigma,a}^n(X))=n\log a+\mathbb D(X+\sigma M)+o(1)$$
as $a\to\infty$, where $M$ is independent of $X$ and has standard normal distribution on $\mathbb{R}^n$.
\end{lemma}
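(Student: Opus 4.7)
The plan is to reduce the lemma to a standard Riemann-sum comparison between Shannon and differential entropy. Set $Y := X + \sigma M$, so that $t_{\sigma,a}^n(X)$ has the same distribution as $t_{0,a}^n(Y)$. Since $M$ is a standard Gaussian independent of $X$, the density of $Y$ is $p = p_X * \varphi_\sigma$, where $\varphi_\sigma$ denotes the Gaussian density. Hence $p$ is smooth, uniformly bounded by $(2\pi\sigma^2)^{-n/2}$, and $\mathbb D(Y)$ is finite (bounded above by the Gaussian maximum-entropy inequality using $\mathbb E(\|Y\|_2^2) < \infty$, and bounded below by $-\log\|p\|_\infty$).

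First I would compute the atoms of $t_{0,a}^n(Y)$ explicitly. By the coordinatewise-independent structure of $t_{0,a}^n$, for each $\vec k \in \mathbb Z^n$ with all coordinates in $(-a^2, a^2)$ (interior grid points),
$$q_{\vec k} \;:=\; \mathbb P\bigl(t_{0,a}^n(Y) = \vec k/a\bigr) \;=\; a^{-n}\,(p * T_a)(\vec k/a),$$
where $T_a(\vec u) = a^n \prod_{i=1}^n(1-a|u_i|)_+$ is the product tent kernel of total mass $1$ supported on $[-1/a,1/a]^n$. Writing $p_a := p * T_a$, the kernel $T_a$ is an approximate identity, so $p_a \to p$ uniformly on every compact set, while $\|p_a\|_\infty \le \|p\|_\infty$.

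Next I would split $\mathbb H(t_{0,a}^n(Y)) = S_{\rm int}(a) + S_{\rm bdry}(a)$ into contributions from interior grid points and from the truncation boundary (the latter including the truncation atoms at $\pm a$ in each coordinate). The explicit formula for $q_{\vec k}$ gives
$$S_{\rm int}(a) \;=\; (n\log a)\!\!\sum_{\vec k\ \mathrm{int}}\!\! a^{-n} p_a(\vec k/a) \;+\; \sum_{\vec k\ \mathrm{int}} a^{-n} p_a(\vec k/a)\,\log\frac{1}{p_a(\vec k/a)}.$$
The first term equals $n\log a - o(1)$, since $\sum_{\vec k\ \mathrm{int}} a^{-n} p_a(\vec k/a)$ is a Riemann sum for $\int p_a = 1$ minus an $O(a^{-2})$ tail (by Chebyshev), and $(\log a) \cdot a^{-2} \to 0$. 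The second term is a Riemann sum for $\int p_a\log(1/p_a)$, which converges to $\int p\log(1/p) = \mathbb D(Y)$ as $a \to \infty$. Meanwhile, the total probability in $S_{\rm bdry}(a)$ is bounded by $\mathbb P(\|Y\|_\infty \geq a - 1) = O(a^{-2})$, distributed over polynomially many atoms in $a$; Jensen's inequality applied to the concave function $x \mapsto -x\log x$ then gives $S_{\rm bdry}(a) \leq m_a \log(N_a/m_a) = O(a^{-2}\log a) = o(1)$. Combining yields $\mathbb H(t_{0,a}^n(Y)) = n\log a + \mathbb D(Y) + o(1)$.

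The main technical obstacle is the Riemann-sum convergence for $\int p \log(1/p)$, as the only decay information for $p$ at infinity is the quadratic moment bound on $Y$. I would handle this by splitting the sum into a large cube $[-R,R]^n$ (on which uniform convergence $p_a \to p$ together with uniform continuity of $x\mapsto -x\log x$ on the bounded interval $[0, \|p\|_\infty]$ yields the limit by a standard Riemann-sum argument for continuous integrands on compact sets) and its complement (on which the contribution is bounded using $\|p_a\|_\infty \le \|p\|_\infty$ together with the Chebyshev tail estimate and the finiteness of $\mathbb D(Y) = \int p\log(1/p)$, the latter ensuring that $\int_{\|\vec y\|>R} p\log(1/p)\,d\vec y \to 0$ as $R\to\infty$ by dominated convergence applied to $|p\log p|\in L^1$).
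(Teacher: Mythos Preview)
Your overall strategy matches the paper: write $t_{\sigma,a}^n(X)\stackrel{d}{=}t_{0,a}^n(Y)$ with $Y=X+\sigma M$, express the interior atoms as $q_{\vec k}=a^{-n}(p\ast T_a)(\vec k/a)$, and separate off the truncation boundary via Chebyshev and Jensen. The boundary estimate is fine and essentially identical to the paper's.

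The gap is in the tail of the interior Riemann sum. You want
\[
\sum_{R<\|\vec k/a\|_\infty<a} a^{-n}\,p_a(\vec k/a)\log\frac{1}{p_a(\vec k/a)}\ \longrightarrow\ 0
\qquad\text{as }R\to\infty,\text{ uniformly in large }a,
\]
and you claim this follows from $\|p_a\|_\infty\le\|p\|_\infty$, the Chebyshev bound $\sum_{\text{tail}}q_{\vec k}\le m_R=O(R^{-2})$, and $\int_{\|y\|>R}|p\log p|\to 0$. But the last item controls the \emph{integral}, not the Riemann sum, and the other two are not enough to bridge the two on a region of volume $\asymp a^n$. Concretely, rewriting $a^{-n}p_a\log(1/p_a)=-q\log q-q\,n\log a$ and applying Jensen to $\sum_{\text{tail}}(-q\log q)$ with $N\le(2a^2)^n$ tail atoms gives only
\[
\sum_{\text{tail}} a^{-n}p_a\log\frac{1}{p_a}\ \le\ s_R\bigl(n\log 2+n\log a-\log s_R\bigr),\qquad s_R:=\sum_{\text{tail}}q_{\vec k}\in[0,m_R],
\]
which for fixed $R$ carries a term of order $m_R\,n\log a\to\infty$. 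Equivalently, the global uniform bound $|p_a-p|=O(1/a)$ (from boundedness of $\nabla p$) only yields $|\eta(p_a)-\eta(p)|=O((1/a)\log a)$ pointwise, and this multiplied by the tail volume $\asymp a^n$ diverges. So the ingredients you list do not close the estimate; some input specific to heated densities is needed.

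The paper supplies exactly that. It first replaces $p_a(\vec k/a)$ by $f(\alpha(\vec k/a))$ with $\|\alpha(\vec k/a)-\vec k/a\|_\infty\le 1/a$ via the intermediate value theorem, and then chooses (again by a mean-value argument) a single shift $\gamma\in[0,1/a]^n$ so that the Riemann sum $a^{-n}\sum_{x\in S'_a}(-f\log f)(x+\gamma)$ equals $\int_{[-a+1/a,a]^n}(-f\log f)$ exactly. The whole difficulty is then pushed into showing that the sums at the points $\alpha(x)$ and $\beta(x)=x+\gamma$ differ by $o(1)$, and this is where Lemma~\ref{arany} enters: for Gaussian-convolved densities one has the \emph{global} ratio bound $f(\alpha)/f(\beta)\ge 1-\varepsilon$ whenever $\|\alpha-\beta\|\le\varepsilon'$ and $f(\beta)$ is not exponentially small in $a$. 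On the exceptional set where $f(\beta)$ is that small, both $f(\alpha)$ and $f(\beta)$ are so tiny that even summing over all $(2a^2+1)^n$ grid points contributes $o(1)$. This ratio control is what your compact-cube/uniform-continuity argument lacks; without it the tail Riemann sum is not under control.
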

The main difficulty of the proof of Lemma \ref{finombecsles} comes from the fact that the support of $X$ is not necessarily compact. We have to treat a situation where we refine and increase the interval of discretization simultaneously.

\begin{proof} By Lemma \ref{lem:veges}, the finite variance of $X$ guarantees that $\mathbb D(X+\sigma M)$ exists and is a finite quantity. Let $S_a=\{r/a | r\in\mathbb{Z}^n, \|r\|_\infty\leq a^2\}$ and let $S'_a=\{x|x\in S_a, \|x\|_\infty<a\}$. For $x\in S_a$ let $p_a(x)$ denote the probability of $x$ in the distribution $t_{\sigma,a}^n(X)$. We have that $$\mathbb{H}(t_{\sigma,a}^n(X))=\sum_{x\in S_a}-p_a(x)\log p_a(x).$$

Let $q_a$ denote the quantity $\mathbb{P}(t^n_{\sigma, a}(X)\in S_a\setminus S'_a)=\sum_{x\in S_a\setminus S'_a} p_a(x)$. The construction of $t_{\sigma,a}^n$ shows that $$q_a\leq \mathbb{P}(\|X+\sigma M\|_\infty\geq a-a^{-1}).$$
By Chebyshev's inequality we have that $$\mathbb{P}(\|X+\sigma M\|_\infty\geq a-a^{-1})=O(a^{-2})$$ and so $q_a=O(a^{-2})$. It follows that \[\sum_{x\in S_a\setminus S'_a}-p_a(x)\log p_a(x)\leq -q_a\log q_a+q_a\log |S_a\setminus S'_a|\leq -q_a\log q_a + q_a\log |S_a|\]\[=-q_a\log q_a+q_a n \log (2a^2+1)=o(1).\]

For $x=(x_1 ,x_2, \dots, x_n)\in\mathbb{R}^n$ let $g_a(x)=\prod_{i=1}^n\max\{1-a|x_i|,0\}$.  For every $x\in S'_a$ we have that $p_a(x)=\int_{z\in\mathbb{R}^n}g_a(z)f(x-z)$ where $f$ is the density function of $X+\sigma M$ on $\mathbb{R}^n$. Using that $a^n\int_{\mathbb{R}^n} g_a=1$ we have that $a^np_a(x)$ is a weighted average of the values of $f$ in an $L^\infty$-ball of radius $1/a$. It follows that for every $x\in S'_a$ there is $\alpha(x)\in\mathbb{R}^n$ with the property that $\|x-\alpha(x)\|_\infty\leq 1/a$ and $a^np_a(x)=f(\alpha(x))$ (using that $f$ is continuous). Now we have that
\begin{equation}\label{eqa1}
\sum_{x\in S'_a} -p_a(x)\log p_a(x)=(n\log a)\sum_{x\in S'_a}p_a(x)-a^{-n}\sum_{x\in S'_a} f(\alpha(x))\log f(\alpha(x)).
\end{equation}
It follows from $q_a=O(a^{-2})$ that
$$(n\log a)\sum_{x\in S'}p_a(x)=n(\log a)(1-q_a)=n\log a + o(1).$$ 
It remains to bound the second part of (\ref{eqa1}).
From the equation
\begin{equation}\label{eqa2}
\int_{z\in[-a+a^{-1},a]^n}-f(z)\log f(z)=\int_{z\in[0,a^{-1}]^n}\sum_{x\in S'_a}-f(x+z)\log f(x+z)
\end{equation}
we obtain that there is a fixed $\gamma\in [0,a^{-1}]^n$ such that
\begin{equation}\label{eqa2b}
a^{-n}\sum_{x\in S'_a}-f(x+\gamma)\log f(x+\gamma)
\end{equation}
is equal to the left hand side of (\ref{eqa2}). On the other hand, the left hand side of (\ref{eqa2}) is equal to $\mathbb D(X+\sigma M)+o(1)$. It remains to show that
\begin{equation}\label{eqa3}
a^{-n}\sum_{x\in S'_a}\bigl(f(\alpha(x))\log f(\alpha(x))-f(\beta(x))\log f(\beta(x))\bigr)=o(1)
\end{equation}
where $\beta(x)=x+\gamma$. (Note that $\alpha, \beta, \gamma$ all depend on $a$.) We will use that $\|\alpha(x)-\beta(x)\|_\infty\leq 2/a$ holds for every $x\in S'_a$ and thus $r(x):=\|\alpha(x)-\beta(x)\|_2\leq 2\sqrt n/a$. Let $t_x=f(\alpha(x))/f(\beta(x))$. We have by Lemma \ref{arany} that for every $\varepsilon>0$ if $a$ is big enough, then $t_x\geq 1-\varepsilon$ (resp. $t_x^{-1}\geq 1-\varepsilon$) provided that $f(\beta(x))> c$ (resp. $f(\alpha(x))>c$) where $c= \sigma^{-n/2}\exp(-a/(32n\sigma^2)).$ This implies that for every $a>0$ we can choose $\varepsilon=\varepsilon(a)$ such that $\lim_{a\rightarrow \infty}\varepsilon(a)=0$ and the previous property holds with $\varepsilon$. We will assume that $a$ is so large that $\varepsilon(a)<1/3$.

Let $T_1$ denote the sum of the terms in (\ref{eqa3}) where $f(\beta(x))\leq 2c$ and let $T_2$ denote the sum of the remaining terms. According to Lemma \ref{arany} either $f(\alpha(x))<c$ or $t_x^{-1}\geq 1-\varepsilon$. If $f(\beta(x))\leq 2c$, then we have  $f(\alpha(x))\leq 2c/(1-\varepsilon)\leq 3c$ in both cases. It follows that $T_1\leq 3c\log(3c)(2a^2+1)^n/a^n=o(1)$. 

Now we estimate $T_2$. From now on we assume that $f(\beta(x))>2c$.  By Lemma \ref{arany} we obtain $t_x\geq 1-\varepsilon$ holds and thus $f(\alpha(x))\geq (1-\varepsilon)2c>c$. This implies again by Lemma \ref{arany} that $t_x^{-1}\geq 1-\varepsilon$ and so $|1-t_x|\leq 2\varepsilon$. We have that $$f(\alpha(x))\log f(\alpha(x))-f(\beta(x))\log f(\beta(x))=f(\alpha(x)) \log t_x-(1-t_x)f(\beta(x))\log f(\beta(x)).$$
Using that $f(\alpha(x))=a^np_a(x)$ and $|\log t_x|\leq -\log(1-2\varepsilon)$ we get that
$$\biggl| a^{-n}\sum_{x\in S'_a, f(\beta(x))>2c} f(\alpha(x))\log t_x\biggr| \leq\sum_{x\in S_a}  p_a(x)(-\log(1-2\varepsilon)) =-\log(1-2\varepsilon)= o(1).$$
It is now clear that the following claim finishes the proof of the lemma.

\noindent{\it Claim:}~$a^{-n}\sum_{x\in S'_a} |f(\beta(x))\log f(\beta(x))| = O(1).$

By Lemma \ref{korlat} there is $b\in\mathbb{R}^+$ such that $f(x)<1$ whenever $\|x\|_\infty>b-1$. Let $B=[-b,b]^n$.
By the finiteness of $B$ we have that 
\begin{equation}\label{equa4}
a^{-n}\sum_{x\in S'_a\cap B} -\beta(x)\log(\beta(x))=\int_B -f\log f + o(1)
\end{equation}
and that
\begin{equation}\label{equa5}
a^{-n}\sum_{x\in S'_a\cap B} |\beta(x)\log(\beta(x))|=\int_B |f\log f| + o(1)
\end{equation}
It follows from (\ref{equa4}) and the property that the left hand side of (\ref{eqa2}) is equal to (\ref{eqa2b}) that  
\begin{equation}\label{equa6}
a^{-n}\sum_{x\in S'_a\setminus B} -\beta(x)\log(\beta(x))=\int_{\overline{B}} -f\log f+o(1).
\end{equation}
By (\ref{equa5}) and (\ref{equa6}) we get that
$$a^{-n}\sum_{x\in S'_a} |\beta(x)\log(\beta(x))|=\int_{\mathbb{R}^n} |f\log f|+o(1)=O(1).\qquad \qquad \square$$

\end{proof}	

\bigskip

In the following lemma, first we calculate the entropy of the random variable $t_{\sigma, a}(x)$ for every fixed $x$, and then we take the expectation of this quantity with respect to the distribution  of a random variable $X$. 

\begin{lemma}\label{varhato} Let $X$ be  a real valued random variable with finite variance and with distribution $\nu$. Then $\mathbb{E}_{\nu}(\mathbb{H}(t_{\sigma,a}(x)))=\log a+\mathbb D(N(0,\sigma))+o(1)$ for every fixed $\sigma>0$ as $a\rightarrow \infty$.
\end{lemma}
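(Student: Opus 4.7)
The plan is to reduce the statement to Lemma \ref{finombecsles} via a conditional entropy identity. Let $X$ have distribution $\nu$ and let $N\sim N(0,1)$ be independent of $X$; write $Y_a = t_{\sigma,a}(X) = t_{0,a}(X+\sigma N)$. Since the conditional law of $Y_a$ given $X=x$ is exactly $t_{\sigma,a}(x)$, the quantity we need is the conditional entropy
\[\mathbb{E}_\nu[\mathbb{H}(t_{\sigma,a}(x))] = \mathbb{H}(Y_a\mid X) = \mathbb{H}(Y_a) - I(X;Y_a),\]
where $I(X;Y_a)$ denotes mutual information.

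First I would apply Lemma \ref{finombecsles} with $n=1$ to $X$ (which has finite variance by hypothesis) to obtain $\mathbb{H}(Y_a) = \log a + \mathbb{D}(X+\sigma N) + o(1)$. Next I would combine this with the standard additive-Gaussian-noise identity
\[\mathbb{D}(X+\sigma N) = \mathbb{D}(N(0,\sigma)) + I(X;X+\sigma N),\]
which holds because $\mathbb{D}(X+\sigma N\mid X) = \mathbb{D}(\sigma N) = \mathbb{D}(N(0,\sigma))$ by the translation-invariance of differential entropy. Substituting, the claim reduces to the limit
\[I(X;Y_a)\to I(X;X+\sigma N)\quad\text{as }a\to\infty.\]

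For this limit, observe that $t_{0,a}$ acts on $X+\sigma N$ using only independent auxiliary randomness, so $X\to X+\sigma N\to Y_a$ is a Markov chain and the data processing inequality gives the upper bound $I(X;Y_a)\leq I(X;X+\sigma N)$. For the matching lower bound I would use the decomposition $I(X;X+\sigma N) = I(X;Y_a) + I(X;X+\sigma N\mid Y_a)$ and argue that the conditional term vanishes: conditional on $Y_a=k/a$, the displacement $X+\sigma N - Y_a$ lies in an interval of length $1/a$ and so converges to $0$ in total variation as $a\to\infty$, whence a standard continuity argument for mutual information forces $I(X;X+\sigma N\mid Y_a)\to 0$.

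The main obstacle I anticipate is making this last continuity step fully rigorous, since $X+\sigma N$ is continuous while $Y_a$ is discrete and the usual continuity results for mutual information demand either strong convergence or uniform integrability. A cleaner fallback is to estimate $\mathbb{E}_\nu[\mathbb{H}(t_{\sigma,a}(x))]$ directly by splitting the integral at $|x|=R(a)$ with $R(a)=\log a$: the tail $|x|>R(a)$ contributes at most $\log(2a^2+2)\cdot \nu(|x|>R(a)) = O(\log a)\cdot O(\mathbb{E}(X^2)/R(a)^2) = o(1)$ by Chebyshev, while on $|x|\leq R(a)$ one reads off from the proof of Lemma \ref{finombecsles} applied to the degenerate distribution $\delta_x$ that the error is uniform in $x$ (the estimates there depend only on moments and Gaussian tails of $x+\sigma N$, both controlled uniformly for $|x|=o(a)$), yielding $\mathbb{H}(t_{\sigma,a}(x)) = \log a + \mathbb{D}(N(0,\sigma)) + o(1)$ uniformly on that set.
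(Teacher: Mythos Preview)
Your mutual-information route is a genuinely different approach from the paper's. The conditional-entropy identity $\mathbb{E}_\nu[\mathbb{H}(t_{\sigma,a}(x))]=\mathbb{H}(Y_a)-I(X;Y_a)$, the application of Lemma~\ref{finombecsles} to compute $\mathbb{H}(Y_a)$, and the additive-Gaussian-noise identity are all correct, and the reduction to $I(X;Y_a)\to I(X;X+\sigma N)$ is clean. The data-processing upper bound is immediate. The remaining step, $I(X;X+\sigma N\mid Y_a)\to 0$, is where you rightly flag the difficulty: the heuristic ``$X+\sigma N-Y_a$ lies in an interval of length $O(1/a)$'' is correct intuition, but it does not by itself bound the conditional mutual information between two continuous variables given a discrete one; the conditional differential entropy $h(X+\sigma N\mid X,Y_a)$ can be very negative on low-probability $(X,Y_a)$-cells, so one still needs an honest averaging argument. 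This can be done, but it is a genuine technical step, not just ``continuity of mutual information''.

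Your fallback is essentially the paper's approach, but the paper handles the key uniformity step differently and more economically. Where you assert that the $o(1)$ in Lemma~\ref{finombecsles} applied to $\delta_x$ is uniform over $|x|\le R(a)$ (plausible, since for $\delta_x$ everything is a translated Gaussian, but it forces the reader to re-audit that whole proof), the paper avoids this entirely. It applies Lemma~\ref{finombecsles} \emph{once}, at $x=0$, to get $\mathbb{H}(t_{\sigma,a}(0))=\log a+\mathbb{D}(N(0,\sigma))+o(1)$. Then for $x$ on the grid $S_a=\{k/a:|k|\le a^2\}$ with $|x|\le a/2$, it notes that $t_{\sigma,a}(x)$ is just $t_{\sigma,a}(0)$ shifted along the grid, so $\mathbb{H}(t_{\sigma,a}(x))-\mathbb{H}(t_{\sigma,a}(0))$ involves only boundary terms of Gaussian-tail size, hence is $o(1)$ uniformly. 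For general $x\in[-a/2,a/2]$ it passes to the nearest grid point $\bar x$ via $d_{\mathrm{TV}}(t_{\sigma,a}(x),t_{\sigma,a}(\bar x))=O(1/a)$ (comparing two Gaussians with means $O(1/a)$ apart) and the entropy-continuity bound $|\mathbb{H}(P)-\mathbb{H}(Q)|\le -d_{\mathrm{TV}}\log(d_{\mathrm{TV}}/|\text{support}|)$. The tail $|x|>a/2$ is handled exactly as you do, via Chebyshev against the trivial bound $\mathbb{H}(t_{\sigma,a}(x))\le\log(a^2+1)$. So your fallback has the right architecture; the paper's contribution is replacing the ``uniformity of Lemma~\ref{finombecsles}'' appeal by a two-line translation-plus-total-variation argument.
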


\begin{proof} Let $\nu_a$ denote the conditional distribution of $X$ when $|X|<a/2$ and let $\nu_a'$ denote the conditional distribution when $|X|\geq a/2$. We have that 
$$\mathbb{E}_{\nu}(\mathbb{H}(t_{\sigma,a}(x)))=\mathbb{P}(|X|<a/2)\mathbb{E}_{\nu_a}(\mathbb{H}(t_{\sigma,a}(x)))+\mathbb{P}(|X|\geq a/2)\mathbb{E}_{\nu_a'}(\mathbb{H}(t_{\sigma,a}(x))).$$
By Chebyshev's inequality we obtain that $\mathbb{P}(|X|\geq a/2)=O(a^{-2})$. It follows from the trivial uniform bound $\mathbb{H}(t_{\sigma,a}(x))\leq\log(a^2+1)$ that $\mathbb{P}(|X|\geq a/2)\mathbb{E}_{\nu_a'}(\mathbb{H}(t_{\sigma,a}(x)))=o(1)$. Similarly by $\mathbb{P}(|X|< a/2)=1-O(a^{-2})$ we obtain that $$\mathbb{P}(|X|<a/2)\mathbb{E_{\nu}}(\mathbb{H}(t_{\sigma,a}(x)))=\mathbb{E}_{\nu_{a}}(\mathbb{H}(t_{\sigma,a}(x)))+o(1).$$
It is now enough to prove that if $|x|\leq a/2$ then $\mathbb{H}(t_{\sigma,a}(x))=\log a +\mathbb D(N(0,\sigma))+o(1)$, where the $o(1)$ error term does not depend on $x$ but tends to $0$ as $a\rightarrow \infty$. 

Lemma \ref{finombecsles}  implies that $\mathbb H(t_{\sigma, a}(0))=\log a+\mathbb D(N(0, \sigma))+o(1)$; this is the $X=0$ case.
Next, suppose that $x\in S_a=\{r/a|r\in \mathbb Z, \| r\|_{\infty}\leq a^2\}$ and $0<x\leq a/2$. Notice that if $y\in S_a$ and $-a< y<  a-x$, then $\mathbb P(t_{\sigma, a}(0)=y)=\mathbb P(t_{\sigma, a}(x)=y+x)$, because the distance of $0$ and $x$ is a multiple of the distance of the points in the grid $S_a$ that we used for discretization. Hence the difference $\mathbb H(t_{\sigma, a}(x))-\mathbb H(t_{\sigma, a}(0))$ contains only terms corresponding to $|y-x|>a/2, y\in S_a$ in the first entropy expression (for $x$) and  $|y|>a/2, y\in S_a$ in the second one (for $0$).  The facts that $t_{\sigma, a}$ is supported on a set of at most $a^2+1$ elements and that the probability that a Gaussian distribution is farther from its expectation than $a/2$ is $O(\exp(-a^2)/2)$ imply that $\mathbb H(t_{\sigma, a}(x))-\mathbb H(t_{\sigma, a}(0))=o(1)$ uniformly in $0<x<a/2$ as $a\rightarrow \infty$, when $x\in S_a$. A similar argument works for $-a/2<x<0$ if $x$ is an element of $S_a$. 

Finally, let $x\in [-a/2, a/2]$ arbitrary, and $\overline x$ be the closest element of $S_a$ to $x$. As it is well known (e.g.\ as a consequence of Pinsker's inequality),  the total variation distance of $\textrm N(x, \sigma)$ and $\textrm N(\overline x, \sigma)$ is of order $O(1/a)$ provided $|x-\overline x|\leq 1/a$. By choosing an appropriate coupling of these two distributions and applying the same discretization, it follows that  $d_{\rm TV}(t_{\sigma,a}(x), t_{\sigma, a}(\overline x))=O(1/a)$. Applying Theorem 17.3.3. of \cite{cover} we obtain that  
\[|\mathbb H(t_{\sigma, a}(x))-\mathbb H(t_{\sigma, a}(\overline x))|\leq -d_{\rm TV}(t_{\sigma,a}(x), t_{\sigma, a}(\overline x))\log \frac{d_{\rm TV}(t_{\sigma,a}(x), t_{\sigma, a}(\overline x))}{a^2+1}=o(1), \]
and the error term does not depend on $x$. This concludes the proof. \hfill $\square$
\end{proof}

We finish this chapter with the proof of Theorem \ref{mainineq}.
Let $\mu_{\sigma,a}$ denote the process in which we pointwise discretize $\mu$ using $t_{\sigma,a}$ (using the notation of Chapter \ref{typpro}, we have $\mu_{\sigma, a}=t_{\sigma, a}*\mu$) and let $\mu_\sigma$ denote the process obtained from $\mu$ by adding $\sigma$ times the i.i.d normal distribution. 
By Lemma \ref{finombecsles} and $|B_k(C)|-(d/2)|B_k(e)|=1$ we obtain that 
$$\mathbb{H}(B_k(C),\mu_{\sigma,a})-\frac{d}{2}\mathbb{H}(B_k(e),\mu_{\sigma,a})=\log a + \mathbb D(B_k(C),\mu_\sigma)-\frac{d}{2}\mathbb D(B_k(e),\mu_\sigma)+o(1).$$
By Theorem \ref{discmain} and the typicality of $\mu$ we get that
$$\mathbb{H}(B_k(C),\mu_{\sigma,a})-\frac{d}{2}\mathbb{H}(B_k(e),\mu_{\sigma,a})\geq \mathbb{E}_{\mu_o}(\mathbb{H}(t_{\sigma,a}(x))).$$
Using the previous formulas, Lemma \ref{varhato} and the limit $a\to\infty$ we obtain that
\begin{equation}\label{eq:eht}\mathbb  D(B_k(C),\mu_\sigma)-\frac{d}{2} \mathbb D(B_k(e),\mu_\sigma)\geq D(N(0,\sigma)\end{equation} for every $\sigma>0$.

Let $S\subset V_d$ be either $B_k(C)$ or $B_k(e)$. We denote by $\mu_{S, \sigma}$ the probability measure obtained by convolving the measure $\mu_S$ with the standard normal distribution on $W_{\lambda}(S)$ (for the definition of $W_\lambda(S)$ see chapter \ref{chap:eigen}), where $\lambda$ is the eigenvalue corresponding to $\mu$. Observe that the standard normal distribution on $\mathbb R^S$ is the independent sum of the standard normal distribution on $W_{\lambda}(S)$ and on $W_{\lambda}(S)^{\perp}$. Then by using that $\mathrm{dim}\, W_{\lambda}(S)^{\perp}=|S|-|\partial S|$, we have 
\[\mathbb D(S, \mu_{\sigma})=\mathbb D_{\rm sp}(\mu_{S, \sigma})+(|S|-|\partial S|)\mathbb D(N(0,\sigma)).\]
 Using this formula for $S=B_k(C)$ and $S=B_k(e)$ in \eqref{eq:eht} together with $|\partial B_k(C)|=(d/2)|\partial B_k(e)|$, we obtain that 
 \[\mathbb D_{\rm sp}(\mu_{B_k(C), \sigma})-\frac{d}{2}\mathbb D_{\rm sp}(\mu_{B_k(e), \sigma})\geq 0.\]
 If $\sigma\rightarrow 0$, then we obtain the statement of Theorem \ref{mainineq}.
   \hfill $\square$

\section{Eigenvalues of the covariance operator}

\label{chap:cov}

The main goal of this section is to prove the following statement on the differential entropies of Gaussian waves. Recall the definition of $\mathbb D_{\rm sp}$ from Chapter \ref{smooth}.  In addition, by $\mathrm{det}_{\rm sp}$ we mean the product of the non-zero eigenvalues of a matrix.

\begin{theorem} \label{thm:nulla} Let $\Psi_{\lambda}$ be a Gaussian wave function on the $d$-regular tree with  $\lambda\in [-2\sqrt{d-1}, 2\sqrt{d-1}]$. Then we have 
\[\mathbb D_{\rm sp}(B_k(C), \Psi_{\lambda})-\frac d2 \mathbb D_{\rm sp}(B_k(e), \Psi_{\lambda})\rightarrow 0 \qquad (k\rightarrow \infty).\]
\end{theorem}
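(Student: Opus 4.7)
Since $\Psi_\lambda$ is a Gaussian process, its restriction to any finite vertex set $S\subset V_d$ is a (degenerate) Gaussian supported on the subspace $W_\lambda(S)$, with covariance matrix $(K_S)_{u,v}=\mathbb{E}(X_uX_v)=\phi_\lambda(d(u,v))$, where $\phi_\lambda$ is the spherical function of the $d$-regular tree associated to eigenvalue $\lambda$. The standard entropy formula for Gaussians on a subspace gives
\[
\mathbb{D}_{\mathrm{sp}}(S,\Psi_\lambda)=\tfrac{1}{2}\dim W_\lambda(S)\,\log(2\pi e)+\tfrac{1}{2}\log\det_{\mathrm{sp}}K_S.
\]
By the dimension formula \eqref{eq:dim} we have $\dim W_\lambda(B_k(C))=d(d-1)^k=(d/2)\dim W_\lambda(B_k(e))$, so the $(2\pi e)$-prefactors cancel exactly in the $(d/2)$-weighted difference and the theorem reduces to showing
\[
\log\det_{\mathrm{sp}}K_{B_k(C)}-\tfrac{d}{2}\log\det_{\mathrm{sp}}K_{B_k(e)}\to 0\qquad(k\to\infty).
\]

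The strategy I would follow is to diagonalize the two covariance operators by exploiting the $\mathrm{Aut}(T_d)$-invariance of $\Psi_\lambda$. Since $K_{B_k(C)}$ commutes with $\mathrm{Stab}(o)$ and $K_{B_k(e)}$ commutes with $\mathrm{Stab}(e)$, each decomposes into blocks indexed by isotypic components for these stabilizer groups. Any non-trivial irreducible representation $\pi$ of the "branch" automorphism group appears in $B_k(C)$ with a $d$-fold multiplicity (one per branch of $o$), which further decomposes under $S_d$ into a trivial piece and a $(d{-}1)$-dimensional standard-rep piece; in $B_k(e)$ the same $\pi$ appears with two-fold multiplicity, decomposed into $\mathbb Z_2$-symmetric and antisymmetric pieces. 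On each isotypic block, $K$ acts as an explicit tridiagonal matrix whose entries are values of $\phi_\lambda$ and whose size depends on the depth at which $\pi$ is first supported. Crucially, because such a $\pi$ lives entirely inside a single branch, these non-central tridiagonal matrices are identical in both geometries; counting multiplicities shows that the $(d/2)$-weighting makes the non-central contributions cancel exactly.

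What remains is the "central" contribution: on $B_k(C)$ the radial, $\mathrm{Stab}(o)$-invariant subspace of dimension $k+2$; on $B_k(e)$ the bi-radial subspace under $\mathrm{Stab}(e)$ of comparable dimension. Intersecting each with $W_\lambda$ cuts these down to effective pieces of dimension bounded by an absolute constant, since the radial eigenvector recursion $\lambda g(n)=g(n-1)+(d-1)g(n+1)$ determines almost everything from boundary data. Using the explicit Chebyshev-polynomial formulas \eqref{eq:fkx}-\eqref{eq:qkx} for the radial solutions and the decay $\phi_\lambda(n)=O((d-1)^{-n/2})$ for $|\lambda|\le 2\sqrt{d-1}$, the determinants of these central blocks approach limits as $k\to\infty$, and a final combinatorial identity between the $d$-fold (star at $o$) and $2$-fold (edge $e$) central weightings forces the limit of the central discrepancy to equal zero.

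The main obstacle is this last step: showing that the central discrepancy vanishes \emph{exactly} in the limit, rather than approaching some nonzero constant. Both central blocks are governed by the same three-term recursion but with different initial/boundary conditions (star-shaped around $o$ versus reflection-symmetric around $e$); expressing their determinants in closed form and verifying the cancellation will require a careful computation with Chebyshev identities. I expect this reduces to an identity between evaluations of $U_k(\lambda/(2\sqrt{d-1}))$ at consecutive levels, which is ultimately why the assumption $|\lambda|\le 2\sqrt{d-1}$ is essential: outside this range one of the two characteristic roots of the radial recursion would dominate and the central determinants would no longer balance.
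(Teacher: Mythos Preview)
Your overall architecture matches the paper's: reduce to $\log\det_{\mathrm{sp}}$, decompose $\mathbb{R}^{B_k(C)}$ and $\mathbb{R}^{B_k(e)}$ into stabilizer-invariant pieces, observe that all ``branch-local'' blocks are literally identical on the two sides and cancel under the $d/2$ weighting, and be left with a handful of central blocks. The paper carries this out with explicit subspaces $\mathcal E_v$ indexed by interior vertices (your isotypic decomposition in different language), and the surviving central pieces are exactly $\mathcal G,\mathcal E_\emptyset$ on the star side versus $\mathcal G'_1,\mathcal G'_2$ on the edge side, each contributing a single nonzero eigenvalue $s_1,s_2,s_3,s_4$ (with $s_2$ of multiplicity $d-1$).

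The gap is in your treatment of these central eigenvalues. You write that, using the decay $\phi_\lambda(n)=O((d-1)^{-n/2})$, ``the determinants of these central blocks approach limits as $k\to\infty$.'' They do not. Although the intersection of each central subspace with $W_\lambda$ is indeed low-dimensional, the single nonzero eigenvalue on each of them is the trace of $K$ restricted to that subspace, and this trace is a sum over spheres of weights times correlations. The sphere sizes grow like $(d-1)^j$, which exactly compensates the $(d-1)^{-j/2}$ decay of $\phi_\lambda$ squared; what one actually finds (Lemma~\ref{lem:s1} and Lemma~\ref{lem:kul} in the paper) is that each $s_j(k)=Ak+O(1)$ for the \emph{same} constant $A>0$. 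The convergence to zero then comes not from the eigenvalues stabilizing, but from the dimension identity $1+(d-1)=\tfrac{d}{2}\cdot 2$: one gets $\log s_1+(d-1)\log s_2-\tfrac{d}{2}(\log s_3+\log s_4)=d\log(Ak)-d\log(Ak)+o(1)$. So the ``final identity'' you anticipate is not a Chebyshev miracle forcing two limits to agree; it is the verification that all four central eigenvalues share the same linear leading term in $k$, which requires the explicit computation and is where the condition $|\lambda|\le 2\sqrt{d-1}$ enters (to ensure $A>0$).
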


Let $\Sigma_k$ be the covariance matrix of the joint distribution of $\Psi_{\lambda}$ restricted to the ball $B_k(C)$, and $\Sigma'_k$ be the similar covariance matrix on $B_k(e)$. The differential entropy of a multivariate normal random variable with covariance matrix $\Sigma$ of rank $m$ is given by $\frac 12 \log \big((2\pi e)^m \det_{\rm sp} \Sigma)\big)$ if we measure differential entropy inside the support of the variable. Equation \eqref{eq:dim} and Corollary \ref{cor:qsp} imply that the rank of $\Sigma_k$ is $d/2$ times the rank of $\Sigma_k'$. Hence  we need  to prove that 
\begin{equation}\label{eq:logdet}\log \det\!_{\rm sp}\, \Sigma_k-\frac{d}{2}\log \det\!_{\rm sp} \, \Sigma_k'\rightarrow 0 \qquad (k\rightarrow\infty).\end{equation}

Notice that if $s$ is an eigenvalue of both $\Sigma_k$ and $\Sigma_k'$, and its multiplicity in the first case is $d/2$ times its multiplicity in the second case, then it is canceled out in the difference. In order to find the eigenvalues that do not cancel out, we decompose both $\mathbb R^{|B_k(C)|}$ and $\mathbb R^{|B_k(e)|}$ as a union of orthogonal subspaces that are invariant under the corresponding covariance operators. 

First we need some notation. We will use the genealogical labeling of the vertices in $B_k(C)$ and in $B_k(e)$ (in this section, we will not distinguish vertices and labels). 
In $B_k(C)$, the root gets label $\emptyset$, and we put the labels on the vertices such that the labels of neighbors differ only in the last coordinate (i.e.\ $1, 2, \ldots, d$ are the neighbors of the root; $11, 12, \ldots, 1(d-1)$ are the further neighbors of $1$, and so on).  For a vertex $v$ the length of its label is denoted by $|v|$, which is its distance  from $\emptyset$. For  a vertex $v$ and a sequence $y$, by $vy$ we mean the vertex with the label obtained by concatenating $v$ and $y$. We say that $v$ is an ancestor of $w$ (denoted by $v\rightarrow w$), if $w=vy$ for some $y\neq \emptyset$. As for $B_k(e)$, we use a similar notation, but keeping track of symmetry with respect to the central edge $e$. The endpoints of $e$ have labels $\emptyset$ and $\emptyset'$. The descendants of $\emptyset$ have labels $1, \ldots, d-1$, their descendants have labels $11, 12, \ldots, 1(d-1), 21, \ldots$ and so on. Similarly, the descendants of $\emptyset'$ have labels $1', \ldots, (d-1)'$, their descendants have labels $11', 12', \ldots, 1(d-1)', 21', \ldots$ and so on.  Note that $|v|$ still denotes the length of the label. 

We assign a linear subspace to each vertex in $B_k(C)\setminus \partial B_k(C)$ and $B_k(e)\setminus \partial B_k(e)$. Fix $v\in B_k(C)\setminus \partial B_k(C)$. Let $\mathcal E_v$ be the elements $\alpha\in\mathbb R^{B_k(C)}$ for which the following hold. $(i)$ $\alpha_w=0$ if $v$ is not an ancestor of $w$. $(ii)$ Suppose that $1\leq j\leq (d-1)$ and $y, z$ are labels with $|y|=|z|$. Then $\alpha_{vjy}=\alpha_{vjz}$. (To put it in another way, for descendants of $v$, the value of $\alpha$ depends only on the first coordinate after $v$ and the distance from $v$.) $(iii)$ We have $\sum_{y: |y|=r} \alpha_{vy}=0$ for $r\geq 1$. 

In addition, we introduce the following subspace: 
\[\mathcal G=\{\alpha\in\mathbb R^{B_k(C)}: \alpha_v=\alpha_w \text{ if } |v|=|w|\}.\] 
We will also refer to $\mathcal E_v'$ (when $v\in B_k(e)\setminus \partial B_k(e)$), which are linear subspaces of $\mathbb R^{B_k(e)}$ defined similarly. 
The definition of the complement subspace is somewhat different:
\[\mathcal G_1'=\{\alpha\in\mathbb R^{B_k(e)}: \alpha_v=\alpha_w \text{ if } |v|=|w|\}.\]
\[\mathcal G_2'=\{\alpha\in\mathbb R^{B_k(e)}: \alpha_v=\alpha_w \text{ if } \emptyset \rightarrow v, \emptyset \rightarrow w, |v|=|w|;\ \alpha_{v'}=-\alpha_v \text{ if } v=\emptyset \text{ or } \emptyset \rightarrow v\}.\]

\begin{lemma}\label{lem:ev}
The following  hold for the linear subspaces defined above. 
\begin{enumerate}[$(a)$]
\item $\mathcal E_v, v\in B_k(C)\setminus \partial B_k(C)$ and $\mathcal G$ are invariant under $\Sigma_k$. Similarly, $\mathcal E_v', v\in B_k(e)\setminus \partial B_k(e)$ and $\mathcal G_1', \mathcal G_2'$ are invariant under $\Sigma_k'$. 
\item $\mathcal E_v, v\in B_k(C)\setminus \partial B_k(C)$ and $\mathcal G$ are pairwise orthogonal. Similarly, $\mathcal E_v', v\in B_k(e)\setminus \partial B_k(e)$ and $\mathcal G_1', \mathcal G_2'$ are pairwise orthogonal. 
\item $\mathbb R^{B_k(C)}=\mathcal G + \sum_{v\in B_k(C)} \mathcal E_v$ and $\mathbb R^{B_k(e)}=\mathcal G_1'+ \mathcal G_2'+ \sum_{v\in B_k(e)} \mathcal E'_v$.
\end{enumerate}
\end{lemma}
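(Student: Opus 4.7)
My plan is to base everything on the $\mathrm{Aut}(T_d)$-invariance of the Gaussian wave, which guarantees that $C(u,w)=\mathbb E(\Psi_\lambda(u)\Psi_\lambda(w))$ depends only on the tree distance $d(u,w)$. Consequently, $\Sigma_k$ and $\Sigma_k'$ commute with the action on the appropriate coordinate space of every subgroup of $\mathrm{Aut}(T_d)$ that stabilizes $C$, respectively $e$, setwise. From here all three parts of the lemma are essentially read off from the isotypic decomposition under suitable stabilizer subgroups.

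For (a), I recognize $\mathcal G$ as the space of $\mathrm{Stab}(o)$-invariants in $\mathbb R^{B_k(C)}$, so its $\Sigma_k$-invariance is automatic. For each $v\in B_k(C)\setminus\partial B_k(C)$ I introduce the subgroup $H_v\le\mathrm{Aut}(T_d)$ fixing every non-descendant of $v$ pointwise, together with its further subgroup $H_v^0\le H_v$ fixing each child of $v$. Conditions (i) and (iii) in the definition of $\mathcal E_v$ together say exactly that the $H_v$-orbit averages of $\alpha$ all vanish, while (ii) says that $\alpha$ is $H_v^0$-invariant. Both properties are preserved by $\Sigma_k$, since $\Sigma_k$ commutes with $H_v$ and $H_v^0$. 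The only point requiring an extra computation is that $(\Sigma_k\alpha)_u=0$ when $v$ is not an ancestor of $u$: for such $u$ and any descendant $w$ of $v$, the identity $d(u,w)=d(u,v)+d(v,w)$ regroups $\sum_w C(u,w)\alpha_w$ as a linear combination of the shell-sums $\sum_{|y|=r}\alpha_{vy}=0$. For $B_k(e)$ the same scheme applies with the stabilizer of the unoriented edge $e$ (whose extra $\mathbb Z/2$ swaps the endpoints) replacing $\mathrm{Stab}(o)$; here $\mathcal G_1'$ and $\mathcal G_2'$ are the $\pm 1$-eigenspaces of the endpoint-swap acting on the ``radial-from-$e$'' subspace.

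For (b), orthogonality reduces to direct combinatorics. Pairing $\alpha\in\mathcal E_v$ with $\beta\in\mathcal G$: the descendants of $v$ at tree-distance $r$ from $v$ all sit on the sphere $\{|w|=|v|+r\}$ around $o$, on which $\beta$ is constant, so $\langle\alpha,\beta\rangle$ collapses to $\sum_r\beta^{(|v|+r)}\sum_{|y|=r}\alpha_{vy}=0$ by (iii). For $\alpha\in\mathcal E_v$ and $\alpha'\in\mathcal E_u$ with $u\ne v$: if neither vertex is an ancestor of the other, supports are disjoint; if WLOG $v$ is a strict ancestor of $u$, then $u$ lies in a unique child-direction of $v$, so (ii) forces $\alpha$ to be constant on each sphere around $u$ inside the subtree at $u$, and (iii) for $\alpha'$ closes the argument. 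The pairing $\mathcal G_1'\perp\mathcal G_2'$ on $B_k(e)$ is immediate from their opposite eigenvalues under the endpoint-swap, and the other cross-orthogonalities repeat the above.

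Finally, for (c), pairwise orthogonality from (b) turns the decomposition claim into a dimension count: $\dim\mathcal G=k+2$, $\dim\mathcal E_\emptyset=(d-1)(k+1)$, and $\dim\mathcal E_v=(d-2)(k+1-|v|)$ for non-root $v$, which when combined with the $d(d-1)^{s-1}$ vertices at depth $s\ge 1$ sums via a routine geometric series to $|B_k(C)|=|B_{k+1}(o)|$. The analogous bookkeeping on $B_k(e)$ with $\dim\mathcal G_1'=\dim\mathcal G_2'=k+1$ matches $|B_k(e)|$. The main bookkeeping obstacle I anticipate is that the statement of (ii) writes $1\le j\le d-1$, whereas the root $\emptyset$ of $B_k(C)$ has $d$ children; I will need to spell out the convention that at the root (and at each of $\emptyset,\emptyset'$ in $B_k(e)$) the index $j$ ranges over the children of $v$ appropriate to the case, so that the isotypic picture in (a) and the dimension match in (c) work uniformly at those special vertices.
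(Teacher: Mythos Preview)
Your proposal is correct and takes essentially the same approach as the paper: both rest on the fact that the entries of $\Sigma_k$ depend only on tree distance, handle (b) via the disjoint-supports / ancestor-and-shell-sum dichotomy, and close (c) by the identical dimension count. Your group-theoretic packaging via $H_v$ and $H_v^0$ is a slightly cleaner formulation of the same verifications (the paper's ``putting this together with the third property'' for (i) is in fact less explicit than your regrouping via $d(u,w)=d(u,v)+d(v,w)$), and the notational point you flag about $j$ ranging over all children of $v$ at the root is indeed the intended convention, needed for $\dim\mathcal E_\emptyset=(d-1)(k+1)$ to hold.
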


\begin{proof} Before going into the proof, we note that we will only use the property that every entry $(i,j)$ of $\Sigma_k$ depends only on the distance of $i$ and $j$. $(a)$ First observe $\mathcal G$ consists of all vectors that are invariant under the full automorphism group of $B_k(C)$. This property is preserved by $\Sigma_k$ and thus $\mathcal G$ is invariant under $\Sigma_k$. 

Take any $\alpha\in \mathcal E_v$. For $\Sigma_k \alpha$, property $(ii)$ is preserved because the entries of $\Sigma_k$ depend only on the distance of the two corresponding vertices (as it is the covariance matrix of an invariant random process). Putting this together with the third property we get that $(i)$ also holds for $\Sigma_k \alpha$. Property $(iii)$ means orthogonality to $\mathcal G$; using the invariance of $\mathcal G$, this will also be satisfied by $\Sigma_k\alpha$, which is thus in $\mathcal E_v$. Similar arguments work for the other two linear subspaces.  
\end{proof} 

$(b)$ Fix $v_1, v_2\in B_k(C)$. If none of them is ancestor of the other one, then the support of any vector of $\mathcal E_{v_1}$ is disjoint from the support of any vector in $\mathcal E_{v_2}$, which implies orthogonality. If $v_1\rightarrow v_2$, then the value of a vector in $\mathcal E_{v_1}$ is the same at all vertices of type $v_2y$ with $|y|$ fixed. Multiplying this by the values of a vector in $\mathcal E_{v_2}$ and summing this up for different $y$s (of fixed length) we get $0$, because of property $(iii)$. This implies the orthogonality. The other cases are similar; we omit the details. 

$(c)$ The dimensions of these subspaces are as follows. 
\[\mathrm{dim}\ \mathcal E_{v}=(k-|v|+1)(d-2) \qquad \text{for } \emptyset \neq v\in B_k(C)\setminus \partial B_k(C);\]
\[\mathrm{dim}\ \mathcal E_{\emptyset}=(k+1)(d-1); \qquad \dim\  \mathcal G=k+2;\] 
\[\mathrm{dim}\ \mathcal E'_{v}=(k-|v|+1)(d-2) \qquad \text{for } \emptyset \neq v\in B_k(e)\setminus \partial B_k(e);\]
\[\mathrm{dim}\ \mathcal G'_{1}=k+1; \qquad \dim\  \mathcal G_2'=k+1.\]
The following equalities are easy to check by induction on $k$: 
\[1+(k+1)d+\sum_{j=1}^{k} d(d-1)^{j-1} (k+1-j)(d-2)=|B_k(C)|;\]
\[2(k+1)+2\sum_{j=1}^{k-1} (d-1)^{j-1} (k-j)(d-2)=|B_k(e)|. \]
Hence the sum of the dimension of the linear subspaces $\mathcal E_v$ and $\mathcal G$ is equal to the dimension of the space $\mathbb R^{B_k(C)}$. Since the subspaces are pairwise orthogonal by part $(b)$ of the lemma, this implies that the sum must be equal to $\mathbb R^{B_k(C)}$. A similar argument works for $B_k(e)$.
\hfill $\square$

For the following lemma, recall the definition of $f(k, x)$ from equation \eqref{eq:fkx}. Furthermore, the calculation about this recurrence relation in \cite{alon} imply that if we take $\lambda=2\sqrt{d-1}x$, then the covariance of the values at distance $k$ in the Gaussian wave $\Psi_{\lambda}$ is equal to $f(k,x)$. 

\begin{lemma}\label{lem:s1}
Let $f(k, x)$ be defined by equation \eqref{eq:fkx}. We define
\[l(k, x)=1+\sum_{j=1}^{k-1}(d-2)(d-1)^{j-1} f(2j, x).\]
Then the eigenvalues of $\Sigma_k$ corresponding to $\mathcal E_{\emptyset}$ and $\mathcal G$ are as follows. 
\[s_1(k, x)=1+\sum_{j=1}^k (d-1)^j f(2j, x)+ \sum_{j=1}^k l(j,x)\quad \text{ with multiplicity }\mathrm{1};\]
\[s_2(k, x)=\bigg(1-s_1(k, x)+\sum_{j=1}^k dl(j,x)\bigg)/(d-1) \quad \text{ with multiplicity }\mathrm{d-1}.\]
The eigenvalues of $\Sigma_k'$ corresponding to $\mathcal G_{1}'$ and $\mathcal G_2'$ are as follows. 
\[s_3(k, x)=\sum_{j=1}^k l(j, x)+(d-1)^{j-1}f(2j-1, x) \quad \text{ with multiplicity }\mathrm{1};\]
\[s_4(k, x)=\sum_{j=1}^k l(j, x)-(d-1)^{j-1}f(2j-1, x)\quad \text{ with multiplicity }\mathrm{1}.\]
\end{lemma}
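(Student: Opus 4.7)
I would prove Lemma~\ref{lem:s1} by diagonalizing $\Sigma_k$ and $\Sigma_k'$ restricted to each of the invariant subspaces identified in Lemma~\ref{lem:ev}, exploiting the almost-sure eigenvector equation $\sum_{w\sim v} X_w = \lambda X_v$ satisfied by the Gaussian wave $\Psi_\lambda$.

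First I would analyze $\Sigma_k|_{\mathcal G}$. Summing the eigenvector equation over vertices of a common distance from $o$ shows that the sphere sums $Y_r = \sum_{v:\, |v|=r} X_v$ satisfy $Y_1 = \lambda X_o$, $Y_2 = (\lambda^2 - d) X_o$, and $Y_{r+1} = \lambda Y_r - (d-1) Y_{r-1}$ for $r \geq 2$, hence $Y_r = \beta_r X_o$ almost surely for explicit scalars $\beta_r$. For any $\alpha \in \mathcal G$ with $\alpha_v = c_{|v|}$ this gives $\langle X, \alpha\rangle = (\sum_r c_r\beta_r)\, X_o$, so $\Sigma_k \alpha = (\sum_r c_r\beta_r)\, (f(|v|))_v$. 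Hence $\Sigma_k|_{\mathcal G}$ has rank one and its unique nonzero eigenvalue is obtained by choosing $\alpha = (f(|v|))_v$; using $\beta_r = |\{v:\,|v|=r\}|\, f(r)$ together with the recurrence for $f$ then rearranges this eigenvalue into the closed form $s_1(k,x)$.

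For $\mathcal E_\emptyset$ I would decompose the subspace into the $d-1$ ``directions'' $\epsilon \in \mathbb R^d$ with $\sum_j \epsilon_j = 0$ by parametrizing $\alpha_v = \epsilon_{j(v)}\, c_{|v|}$, where $j(v)$ is the neighbor of $o$ on the geodesic to $v$. The directional sums $Z_\epsilon(l) = \sum_j \epsilon_j \sum_{y:\,|y|=l-1} X_{jy}$ satisfy the same three-term recurrence as the $Y_r$, but now the boundary term $(\sum_j \epsilon_j) X_o$ vanishes, so $Z_\epsilon(l) = \gamma_l\, Z_\epsilon(1)$ almost surely. This reduces the problem inside each direction to rank one and supplies a single eigenvalue of multiplicity $d-1$, which after simplification yields $s_2(k,x)$. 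The treatment of $\mathcal G_1'$ and $\mathcal G_2'$ is parallel but uses both endpoints of $e$: writing each side's sphere sums as $Y_r^{(+)} = p_r X_\emptyset - q_r X_{\emptyset'}$ and its mirror image on the other side, the symmetric combination (for $\mathcal G_1'$) and the antisymmetric combination (for $\mathcal G_2'$) decouple; the opposite sign of the cross-side contributions of the form $(d-1)^{j-1} f(2j-1)$ is exactly what distinguishes $s_3$ from $s_4$.

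The main obstacle is the algebraic rearrangement of the natural variance expressions produced by the above (sums like $\sum_r |\{v:\,|v|=r\}|\, f(r)^2$ and their directional and edge analogues) into the stated closed forms involving the auxiliary quantity $l(j,x)$. This step uses the Chebyshev identity defining $q_k$ together with the recurrence $f(r-1) + (d-1) f(r+1) = \lambda f(r)$ for $r \geq 1$, and a careful level-by-level telescoping; checking that the resulting sums agree term-by-term with the formulas for $l(j,x)$ and then for $s_1,\dots,s_4$ is where the bulk of the bookkeeping lies.
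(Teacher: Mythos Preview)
Your approach is correct and is essentially the same as the paper's: both show that $\Sigma_k$ restricted to each of these invariant subspaces has rank one (respectively $d-1$ on $\mathcal E_\emptyset$) because the image lies in the one-dimensional (respectively $(d-1)$-dimensional) intersection with $W_\lambda$, and then extract the single nonzero eigenvalue. The only cosmetic difference is that the paper reads off the eigenvalue as the trace of $\Sigma_k|_{\mathcal G}$ in the basis of sphere indicators, whereas you identify the eigenvector $(f(|v|))_v$ explicitly and compute its eigenvalue; for a rank-one operator these give the same number $\sum_r N_r f(r)^2$, and the paper, like you, leaves the algebraic rearrangement into the stated $s_i$ formulas as an elementary calculation.
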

\begin{proof} Using the notation from Chapter \ref{chap:eigen}, let $\mathcal S_k=W_{\lambda}(B_k(C))$ and $\mathcal S'_k=W_{\lambda}(B_k(e))$.
First we consider $\mathcal G$. Since $f(k,x)$ is the covariance of the values at distance $k$ in a (nontrivial) Gaussian wave, we have by linearity that every row of $\Sigma_k$ is in $S_k.$ It follows that $\mathrm{Im}(\Sigma_k)\subseteq \mathcal S_k$. On the other hand, by the previous lemma, $\mathcal G$ is invariant under $\Sigma_k$. Notice that $\mathcal G\cap \mathcal S_k$ is one dimensional: given the value at the root, the common value of its neighbors is determined (even for $\lambda=0$), and this can be continued. It follows that $\Sigma_k|_\mathcal G$ has rank one, and the eigenvalue corresponding to $\mathcal G$ can be obtained by  calculating the trace of the matrix of $\Sigma_k|_\mathcal G$ in an arbitrary basis. By choosing the basis of the indicator functions of the spheres of radius $0, 1, \ldots, k$ around the root, elementary calculation shows that this eigenvalue is equal to $s_1(k, x)$.

The three eigenvalues corresponding to the invariant subspaces $\mathcal E_{\emptyset}$, $\mathcal G_1'$ and $\mathcal G_2'$ can be obtained by similar arguments, by identifying the image of $\Sigma_k$ (or $\Sigma_k'$) restricted to the given subspace and calculating the trace of its matrix. In the second case, $\mathcal E_{\emptyset}'\cap \mathcal S_k$ has dimension $d-1$: 0 is assigned to the root; the values of the $d$ neighbors of $\emptyset$ have to sum up to 0, but there are no other conditions; given these values, all the others are uniquely determined. The only nonzero eigenvalue has multiplicity $d-1$, which makes it possible to calculate it based on the trace of the matrix. As for the last two cases, $\mathcal G_1'\cap \mathcal S_k'$ and $\mathcal G_2'\cap \mathcal S_k'$ both have dimension 1 again: given the value at $\emptyset$, the value at $\emptyset'$ has to be the same or the opposite. Then the eigenvalue equation and the equality conditions in $\mathcal G_1'$ and $\mathcal G_2'$ uniquely determine all the other values. By choosing appropriate bases in these subspaces, it is straightforward to obtain the eigenvalues in the lemma. \end{proof}\hfill $\square$

\begin{lemma}\label{lem:kul}
Using the notation of the previous lemma, for every $x\in [-1, 1]$, we have 
\[\log s_1(k, x)+(d-1)\log s_2(k, x)-\frac d2 \log s_3(k, x)-\frac d2 \log s_4(k, x)\rightarrow 0 \qquad (k\rightarrow \infty).\]
\end{lemma}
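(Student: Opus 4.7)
The plan is to show that all four eigenvalues $s_1,s_2,s_3,s_4$ share a single leading quantity
\[
A_k := \sum_{j=1}^{k} l(j,x),
\]
after which the identity $1+(d-1)-\tfrac{d}{2}-\tfrac{d}{2}=0$ forces the weighted log-combination to vanish in the limit. Using the formulas of Lemma~\ref{lem:s1} together with the algebraic identities $(d-1)^j f(2j,x) = \sqrt{(d-1)/d}\,q_{2j}(x)$ and $(d-1)^{j-1} f(2j-1,x) = q_{2j-1}(x)/\sqrt{d}$, direct substitution gives
\[
\begin{aligned}
s_1 &= A_k + 1 + \sqrt{(d-1)/d}\,S^e_k, & s_2 &= A_k - S^e_k/\sqrt{d(d-1)},\\
s_3 &= A_k + S^o_k/\sqrt{d}, & s_4 &= A_k - S^o_k/\sqrt{d},
\end{aligned}
\]
where $S^e_k := \sum_{j=1}^{k} q_{2j}(x)$ and $S^o_k := \sum_{j=1}^{k} q_{2j-1}(x)$. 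In particular $s_3+s_4=2A_k$ and $s_1+(d-1)s_2=1+dA_k$; positive semi-definiteness of the covariance operators $\Sigma_k,\Sigma_k'$ gives $s_i\ge 0$, whence $A_k\ge 0$.

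I would then aim to establish the two assertions (a) $A_k\to\infty$ and (b) $|s_i-A_k|/A_k\to 0$ for each $i\in\{1,2,3,4\}$. Granted these, $\log s_i = \log A_k + o(1)$ term by term, and the combination in the statement collapses via the coefficient identity to $(1+(d-1)-\tfrac{d}{2}-\tfrac{d}{2})\log A_k + o(1) = o(1)$. To verify (a) and (b) I would parametrize $x=\cos\theta$ and use
\[
q_m(\cos\theta) = \frac{2\cos(m\theta)+(d-2)\sin((m+1)\theta)/\sin\theta}{\sqrt{d(d-1)}},
\]
derived from \eqref{eq:qkx} by $\sin((m+1)\theta)-\sin((m-1)\theta) = 2\cos(m\theta)\sin\theta$. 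For $\theta\in(0,\pi)$, both $S^e_k$ and $S^o_k$ reduce to Dirichlet-type trigonometric sums uniformly bounded in $k$. Interchanging the order of summation yields
\[
A_k = k + \frac{d-2}{\sqrt{d(d-1)}}\sum_{n=1}^{k-1}(k-n)\,q_{2n}(\cos\theta),
\]
and setting $z := e^{2i\theta}$, the Fej\'er-type limit
\[
\lim_{k\to\infty}\frac{1}{k}\sum_{n=1}^{k-1}(k-n)z^n \;=\; \frac{z}{1-z} \;=\; -\tfrac{1}{2}+\tfrac{i\cot\theta}{2}
\]
(obtained from the closed form $\sum_{n=1}^{k-1}(k-n)z^n = [(k-1)z-kz^2+z^{k+1}]/(1-z)^2$) gives, after substitution and simplification,
\[
\frac{A_k}{k}\;\longrightarrow\;\frac{2}{d}+\frac{(d-2)^2}{2d(d-1)\sin^2\theta}\;>\;0,
\]
so $A_k\to\infty$ linearly while $|s_i-A_k|=O(1)$, proving (a) and (b) for $\theta\in(0,\pi)$. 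At the endpoints $x=\pm 1$ (i.e.\ $\theta\in\{0,\pi\}$), direct computation from $q_m(\pm 1) = (\pm 1)^m((d-2)m+d)/\sqrt{d(d-1)}$ yields $S^e_k,S^o_k=\Theta(k^2)$ and $A_k=\Theta(k^3)$, so $|s_i-A_k|/A_k = O(1/k)$ there as well.

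The main technical obstacle is this Fej\'er-kernel bookkeeping producing the explicit linear-growth constant $c(\theta) = 2/d + (d-2)^2/(2d(d-1)\sin^2\theta)$; strict positivity is built in via the $2/d$ summand, so no delicate case analysis is needed at the zeros of $\sin\theta$ (those are the endpoint cases handled separately). Once the closed form is in hand, the proof collapses to the elementary expansion $\log s_i = \log A_k + O(1/A_k)$ combined with the cancellation $1+(d-1)-d/2-d/2=0$.
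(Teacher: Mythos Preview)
Your proof is correct and follows essentially the same strategy as the paper's: isolate a common leading term shared by all four eigenvalues, show the corrections are of lower order, and use the coefficient identity $1+(d-1)-\tfrac{d}{2}-\tfrac{d}{2}=0$ to collapse the combination of logarithms.

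The organization differs in a way worth recording. The paper works with the linear growth rate of $s_1$, obtaining a constant
\[
A=1-\frac{d-2}{d(d-1)}-\frac{(d-2)^2}{d(d-1)\sin\vartheta}\,\mathrm{Im}\,\frac{e^{3i\vartheta}}{e^{2i\vartheta}-1},
\]
and must then verify $A>0$ by a separate trigonometric estimate. Your choice of the partial sum $A_k=\sum_{j\le k}l(j,x)$ as the reference quantity, combined with the Ces\`aro/Fej\'er evaluation, produces the equivalent but manifestly positive constant
\[
c(\vartheta)=\frac{2}{d}+\frac{(d-2)^2}{2d(d-1)\sin^2\vartheta},
\]
so the positivity check disappears. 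The endpoint analysis ($x=\pm 1$, where the orders jump to $A_k\sim k^3$ and $|s_i-A_k|=O(k^2)$) is the same in both arguments.
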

\begin{proof}
First we calculate the middle term of $s_1(k, x)$. Using equation \eqref{eq:fkx}, we obtain that
\[T(k, x):=\sum_{j=1}^k (d-1)^j f(2j, x)=\sum_{j=1}^k \frac{(d-1)^j}{\sqrt{d(d-1)^{2j-1}}}q_{2j}(x),\]
where the polynomials $q$ are defined by equation \eqref{eq:qkx}.
%\[T(k, x)=\sum_{j=1}^k \sqrt{\frac{d-1}{d}}\bigg(\sqrt{\frac{d-1}{d}}U_{2j}(x)-\frac{1}{\sqrt{d(d-1)}}U_{2j-2}(x)\bigg)\]
%\[T(k, x)=\frac{d-1}{d}U_{2k}(x)+\sum_{j=1}^k \sqrt\frac{d-1}{d}\cdot\frac{1}{\sqrt{d(d-1)}}U_{2j-2}(x)-\frac{1}{d}U_0(x)\]
%\[T(k, x)=\frac{d-1}{d}U_{2k}(x)+\frac{d-2}{d}\sum_{j=1}^{k-1}U_{2j}(x)-\frac 1d\]
%\[T(k, x)=\frac{d-1}{d}U_{2k}(x)+\frac{d-2}{d}\sum_{j=1}^{k-1} \mathrm{Im} \frac{e^{(2j+1)i\vartheta}}{\sin \vartheta}-\frac 1d\] 
Straightforward calculation shows that with $x=\cos \vartheta$  we have
\[T(k, x)=\frac{d-1}{d}U_{2k}(x)+\frac{d-2}{d\sin \vartheta} \mathrm{Im}\frac{e^{3i\vartheta}(e^{(2k-2)i\vartheta}-1)}{e^{2i\vartheta}-1}-\frac 1d,\]
if $\sin \vartheta\neq 0$ and $e^{2\vartheta}\neq 1$. (We deal with the exceptional cases at the end of the proof.)
Using this formula, we obtain that 
\begin{equation*}l(k, x)=1-\frac{d-2}{(d-1)d}+\frac{d-2}{d}\,U_{2k-2}(x)+\frac{(d-2)^2}{d(d-1)\sin\vartheta}\,\mathrm{Im}\ \frac{e^{3i\vartheta}(e^{(2k-4)i\vartheta}-1)}{e^{2i\vartheta}-1} \qquad (k\geq 2).\end{equation*}
Notice that the last term is bounded in $k$ for every fixed $x$. From now on, $O(1)$ will denote a quantity which depends both on $x$ and $k$ such that {\em for every fixed $x$ it is bounded in $k$}. We emphasize that in Theorem \ref{thm:nulla} the limit is taken for fixed $\lambda$ (which is equal to $x\cdot \sqrt{d-1}$). Thus in the proofs of this chapter we always think of $x$ as a fixed quantity while tending to infinity with $k$. 

Continuing our calculations, we obtain that
\[s_1(k, x)=1+T(k, x)+\sum_{j=1}^k l(j, x)=\bigg(1-\frac{d-2}{d(d-1)}-\frac{(d-2)^2}{d(d-1)\sin \vartheta}\mathrm{Im}\, \frac{e^{3i\vartheta}}{e^{2i\vartheta}-1}\bigg)k+O(1).\]
On the other hand, we have 
\[s_1(k, x)-s_2(k, x)=\frac{d}{d-1}s_1(k,x)-\frac{d}{d-1}\sum_{j=1}^k l(j, x)-\frac{1}{d-1}=O(1);\]
\[s_1(k, x)-s_3(k, x)=1+\sum_{j=1}^k (d-1)^j f(2j, x)-(d-1)^j f(2j-1, x)=O(1);\]
\[s_3(k, x)-s_4(k, x)=2\sum_{j=1}^k (d-1)^j f(2j-1, x)=O(1).\]
To put it in another way, with 
\[A=1-\frac{d-2}{d(d-1)}-\frac{(d-2)^2}{d(d-1)\sin \vartheta}\,\mathrm{Im}\, \frac{e^{3i\vartheta}}{e^{2i\vartheta}-1}\]
the expressions $s_1(k, x)$, $s_2(k, x)$, $s_3(k, x)$ and $s_4(k, x)$ are all in the form $Ak+O(1)$. If $A>0$, then we get that \[\log s_j(k, x)=\log A + \log k+o(1)  \qquad (j=1, 2, 3, 4),\] which implies the statement of the lemma. Hence, in the rest of the proof, we check that $A>0$ holds. 

First notice that $1-\frac{d-2}{d(d-1)}>\frac{(d-2)^2}{d(d-1)}$ is satisfied for all $d$. In addition, by using elementary trigonometric identities we obtain
\[\mathrm{Im}\,\frac{e^{3i\vartheta}}{\sin \vartheta(e^{2i\vartheta}-1)}=\frac{\sin \vartheta-\sin 3\vartheta}{2\sin\vartheta(1-\cos 2\vartheta)}=\frac{1-\cos 2\vartheta-2\cos^2\vartheta}{4 \sin^2\vartheta}=\frac{\sin^2\vartheta-\cos^2\vartheta}{2\sin^2 \vartheta}\leq 1.\]
Putting this together, we get the positivity of $A$, which concludes the proof.

We have to deal with the remaining special cases.  First, if $|x|=1$, then $\sin \vartheta=0$, but we still have $U_k(\cos \vartheta)=k+1$. This implies that $l(k, x)$ is a quadratic polynomial, and $s_1(k, x)$ is a cubic polynomial (with leading coefficient $1-(d-2)^2/d/(d-1)>0$). Moreover, the differences $s_1-s_2, s_1-s_3, s_1-s_4$ are all of order $O(k^2)$, which implies the statement of the lemma.

The last case is when $e^{2i\vartheta}=1$. This implies $U_k(\cos \vartheta)=1$ for all $k$.  That is, $q_k(x)=(d-2)/\sqrt{d(d-1)}$, and $l(k, x)$ is of the form $Bk+O(1)$ for some nonzero $B$. Now $s_1, s_2, s_3, s_4$ are all quadratic polynomials as a function of $k$, while their differences are linear. It follows again that the expression in the lemma goes to 0 as $k\rightarrow\infty$. \hfill $\square$
\end{proof}

\noindent{\it Proof of Theorem \ref{thm:nulla}.} As we have discussed, it is sufficient to show that \eqref{eq:logdet} holds. First fix $k$, and recall Lemma \ref{lem:ev}. Notice that for every $1\leq r\leq k-1$, if we take two vertices $v_1, v_2$ in $B_k(C)$ such that $|v_1|=|v_2|=r$, then the linear transformation $\Sigma_k$ restricted to $\mathcal E_{v_1}$ is isomorphic to the linear transformation $\Sigma_k$ restricted to $\mathcal E_{v_2}$ (we use again that the entries of the covariance matrix depend only on the distance of the vertices). Hence the set of eigenvalues of $\Sigma_k$ corresponding to the invariant subspaces $\mathcal E_{v_1}$ and $\mathcal E_{v_2}$ are the same. Furthermore, this linear transformation is 
isomorphic to the linear transformation $\Sigma_k'$ restricted to $\mathcal E_{v}'$, if $|v|=r$ holds.  For every $1\leq r\leq k-1$ we have 
\[\big|\{v: |v|=r,\, v\in B_k(C)\}\big|=\frac d2 \big|\{v: |v|=r,\, v\in B_k(e)\}\big|.\] 
This means that all eigenvalues belonging to the spaces $\mathcal E_v$ and $\mathcal E'_v$ cancel out  in the expression in \eqref{eq:logdet} for $|v|>0$. 

Therefore only the eigenvalues corresponding to $\mathcal G, \mathcal E_{\emptyset}', \mathcal G_1'$ and $\mathcal G_2'$ are left. These are calculated with multiplicities in Lemma \ref{lem:s1}, and hence Lemma \ref{lem:kul} finishes the proof by showing that the difference goes to zero as $k\rightarrow \infty$. \hfill $\square$

\section{Improved differential entropy inequality}
\label{chap:impr}

In this chapter we use a combination of Theorem \ref{mainineq} and Theorem \ref{thm:nulla} to prove an improved version of Theorem \ref{mainineq} for the case $k=0$. We will use the notation from Chapter \ref{chap:eigen}. If $S$ is either $B_k(C)$ or $B_k(e)$ for some $k\geq 0$ and $p$ is in $S\setminus\partial S$, then we have for the Gaussian wave $\Psi_\lambda$ with distribution $\mu$ that

\begin{equation}\label{dfor}
\mathbb{D}(\mathcal{Q}(S,p,\mu))=\mathbb{D}(B_p)+\sum_{(v,w)\in D}\mathbb{D}(A_{v,w}).
\end{equation}
 We obtain from (\ref{dfor}) that  the differential entropy $\mathbb{D}(\mathcal{Q}(S,p,\mu))$ does not depend on $p$ for a Gaussian wave with distribution $\mu$. Observe that changing the vertex $p$ to $p'$ results in a linear transformation $T_{p,p'}$ in the system $\mathcal{Q}(S,p,\mu)$. The invariance of the differential entropy shows by Lemma \ref{difftrdet} that $T_{p,p'}$ has determinant $1$. 
 
Now applying Lemma \ref{difftrdet} for an arbitrary smooth eigenvector process $\nu$ with eigenvalue $\lambda$ we obtain that the value of $\mathcal{D}(\mathcal{Q}(S,p,\nu))$ is independent of $p$ since the transformations $T_{p,p'}$ depend only on the quadruple $p,p',\lambda,S$ and can be calculated from the eigenvector equation. For a general smooth eigenvector process $\nu$ we define $\mathbb{D}(S,\nu)$ as this unique differential entropy of $\mathbb{D}(\mathcal{Q}(S,p,\nu))$. We will need the next definition.
 
 \begin{definition} A process $\{Y_v\}_{v\in V_d}$ in $I_d(X)$ is called $2$-Markov if for an arbitrary edge $e$ the distributions $\{Y_v\}_{v\in W_1}$ and $\{Y_v\}_{v\in W_2}$ are conditionally independent with respect to $\{Y_v\}_{v\in e}$ where $W_1$ and $W_2$ are the set of vertices on the two sides of $e$. (With this notation $V_d=W_1\cup e\cup W_2$.)
 \end{definition}
 
 Note that the $2$-Markov property implies that the marginal distribution $Y_C=\{Y_v\}_{v\in C}$ determines the whole process because we can build up the distribution $\{Y_v\}_{v\in V_d}$ using iterated conditionally independent couplings of $Y_C$ along edges. More precisely, if for some connected subgraph $K$ of $T_d$ the distribution $\{Y_v\}_{v\in V(K)}$ is already constructed and $w\in V_d$ is a vertex such that the star $B_1(w)$ intersects $K$ in a single edge $e$,  then the joint distribution $\{Y_v\}_{v\in V(K)\cup B_1(w)}$ is the conditionally independent coupling of $\{Y_v\}_{v\in V(K)}$ and $\{Y_v\}_{v\in B_1(w)}$ with respect to $\{Y_v\}_{v\in e}$. The invariance of the process implies that $\{Y_v\}_{v\in B_1(w)}$ has the same distribution as $Y_C$. By iterating this procedure we can build up the marginal distribution on any finite connected subgraph of $T_d$ and thus the whole process in uniquely determined.
 
 \begin{lemma}\label{imprlem2} We have for every $k\geq 1$ and smooth eigenvector process $\nu$ the following three inequalities. $$\mathbb{D}(B_k(C),\nu)\leq d\mathbb{D}(B_k(e),\nu)-(d-1)\mathbb{D}(B_{k-1}(C),\nu),$$
 $$\mathbb{D}(B_k(e),\nu)\leq 2\mathbb{D}(B_{k-1}(C),\nu)-\mathbb{D}(B_{k-1}(e),\nu),$$
 $$\mathbb{D}(B_k(C),\nu)-(d/2)\mathbb{D}(B_k(e),\nu)\leq\mathbb{D}(B_{k-1}(C),\nu)-(d/2)\mathbb{D}(B_{k-1}(e),\nu).$$ If $\nu$ is Gaussian, then we have equality everywhere. Furthermore if we have equality everywhere (for every $k$), then $\nu$ is $2$-Markov.
 \end{lemma}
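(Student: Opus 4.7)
The three inequalities all stem from Shannon's submodularity inequality, which for joint differential entropy states $\mathbb{D}(X_A) + \mathbb{D}(X_B) \geq \mathbb{D}(X_{A\cup B}) + \mathbb{D}(X_{A\cap B})$ with equality iff $X_{A\setminus B}$ and $X_{B\setminus A}$ are conditionally independent given $X_{A\cap B}$. I plan to apply this at the level of the supported entropies $\mathbb{D}_{\rm sp}$; since $\mathbb{D}(S,\nu)$ differs from $\mathbb{D}_{\rm sp}(S,\nu)$ by a constant $c_S$ depending only on $S$ and $\lambda$ (arising from the linear change of coordinates to the $\mathcal{Q}$-system, whose determinant is $\nu$-independent), the inequality for $\mathbb{D}$ follows from that for $\mathbb{D}_{\rm sp}$ once I verify that the combinations $c_{B_k(C)} + (d-1)c_{B_{k-1}(C)} - dc_{B_k(e)}$ and $2c_{B_{k-1}(C)} - c_{B_k(e)} - c_{B_{k-1}(e)}$ vanish. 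That vanishing follows because the Gaussian wave is $2$-Markov: equality holds for Gaussian simultaneously in the claimed $\mathbb{D}$-inequality and in the $\mathbb{D}_{\rm sp}$-inequality established below, which forces the constants to cancel.

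For \textbf{inequality 2}, I apply Shannon submodularity with $A = B_k(o)$, $B = B_k(o')$: the tree-distance identities $A \cup B = B_k(e)$ and $A \cap B = B_{k-1}(e)$ follow at once from $d(v,e) = \min(d(v,o),d(v,o'))$. The invariance of $\nu$ under the edge-swap automorphism gives $\mathbb{D}_{\rm sp}(B_k(o)) = \mathbb{D}_{\rm sp}(B_k(o')) = \mathbb{D}_{\rm sp}(B_{k-1}(C))$, completing (2). For \textbf{inequality 1}, I iterate submodularity over the $d$ balls $B_k(e_i)$ around the edges incident to $o$. The key geometric fact is $B_k(e_i) \cap B_k(e_j) = B_{k-1}(C)$ for $i\ne j$: a vertex within distance $k$ of both $v_i$ and $v_j$ cannot lie strictly in either subtree (else the other distance exceeds $k$), so it lies within distance $k$ of $o$. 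Accumulating submodularity steps $A_i = \bigcup_{j\le i} B_k(e_j)$ with the constant intersection $A_{i-1}\cap B_k(e_i) = B_{k-1}(C)$, together with invariance giving $\mathbb{D}_{\rm sp}(B_k(e_i)) = \mathbb{D}_{\rm sp}(B_k(e))$, yields (1). \textbf{Inequality 3} follows algebraically: adding (1) and $\tfrac{d}{2}\cdot$(2) cancels to give the monotonicity of $\mathbb{D}(B_k(C)) - \tfrac{d}{2}\mathbb{D}(B_k(e))$ in $k$.

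For the \textbf{Gaussian case}, $\Psi_\lambda$ is $2$-Markov because on the tree the Gaussian covariance $f(d(\cdot,\cdot))$ factorizes along paths (a consequence of the recurrence underlying $f$), making the two subtrees through any edge conditionally uncorrelated and hence conditionally independent given the edge. Under $2$-Markov, in each submodularity step above the symmetric differences are separated in the tree by the intersection set $A\cap B$, hence conditionally independent given $X|_{A\cap B}$, so every Shannon inequality is tight and equality holds in (1), (2), (3).

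For the \textbf{converse}, equality everywhere forces each Shannon step to be tight, yielding conditional-independence statements: for all $k$, the depth-$k$ leaves on the two sides of $e$ are conditionally independent given $X|_{B_{k-1}(e)}$ (from (2)), and the $d$ subtree-increments at $o$ are mutually conditionally independent given $X|_{B_{k-1}(C)}$ (from (1)). I propose to show by induction on $K$ that $X|_{T_o \cap B_K(o)}$ and $X|_{T_{o'} \cap B_K(o')}$ are conditionally independent given $X_e = (X_o, X_{o'})$; the base case $K=1$ is exactly equality in (2) at $k=1$. The induction step adds one layer on each side; the auxiliary conditional independence it requires (new $o$-layer independent of old $o'$-interior given old $o$-interior) is supplied by equality in (1) at the appropriate level (isolating each subtree increment) together with an invariance-symmetric version centered at $o'$. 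Letting $K\to\infty$ gives $2$-Markov across $e$, and invariance extends it to all edges. The main obstacle is this final step: the careful combinatorial bookkeeping of which equalities in (1) and (2) supply which conditional independence in the inductive argument; the geometric identifications of $A \cup B$ and $A \cap B$ and the Shannon/algebra manipulations are routine by comparison.
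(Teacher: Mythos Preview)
Your approach via Shannon submodularity and the same geometric covers ($B_k(C)$ by $d$ copies of $B_k(e)$ over $B_{k-1}(C)$; $B_k(e)$ by two copies of $B_{k-1}(C)$ over $B_{k-1}(e)$) is exactly the paper's. The difference is in execution. The paper applies Lemma~\ref{diffincform} \emph{directly} to $\mathbb{D}(S,\nu):=\mathbb{D}(\mathcal{Q}(S,p,\nu))$: placing $p$ at the center of $B_k(C)$ (resp.\ at an endpoint of $e$), the $\mathcal{Q}$-variables $\{B_p\}\cup\{A_{v,w}\}$ for each of the smaller sets are literally \emph{subsets} of those for the big set, so ordinary submodularity applies with no constants to chase. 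Your route---prove submodularity for $\mathbb{D}_{\rm sp}$ on vertex sets, then cancel constants via the Gaussian---has a gap at the first step: Lemma~\ref{diffincform} is for absolutely continuous joint distributions, whereas $\nu|_A,\nu|_B,\nu|_{A\cup B},\nu|_{A\cap B}$ are supported on \emph{different} subspaces $W_\lambda(\cdot)$, and the inequality $\mathbb{D}_{\rm sp}(A)+\mathbb{D}_{\rm sp}(B)\ge \mathbb{D}_{\rm sp}(A\cup B)+\mathbb{D}_{\rm sp}(A\cap B)$ is not a literal instance of it (indeed it can fail in toy examples where the dimension count matches). To justify it you need coordinates on $W_\lambda(A\cup B)$ in which the three smaller supports correspond to coordinate subsets---and that is precisely the $\mathcal{Q}$-system with the well-chosen $p$. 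Your Gaussian constant-cancellation is then superfluous: once you work in those coordinates the inequality \emph{is} the lemma's $\mathbb{D}$-inequality, with no constants.

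For the converse, the paper is also more direct than your layer-by-layer induction: equality in (1) and (2) for every $k$ says that $\nu|_{B_k(C)}$ is the conditionally independent coupling of $d$ copies of $\nu|_{B_k(e)}$ over $\nu|_{B_{k-1}(C)}$, and $\nu|_{B_k(e)}$ of two copies of $\nu|_{B_{k-1}(C)}$ over $\nu|_{B_{k-1}(e)}$; iterating these two splittings yields the $2$-Markov property inside every $B_k(C)$, hence globally, without the careful bookkeeping you flag as the main obstacle.
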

 
 \begin{proof} The proof is based on the general fact (see Lemma \ref{diffincform}) that for a joint distribution $(X,Y,Z)$ we have that $\mathbb{D}(X,Z)+\mathbb{D}(Y,Z)-\mathbb{D}(Z)\geq\mathbb{D}(X,Y,Z)$ holds with equality if and only if $X$ and $Y$ are conditionally independent with respect to $Z$. To see the first inequality let us place $p$ to the root of $B_k(C)$. We can cover $B_k(C)$ in a rotational symmetric way by $d$ copies of $B_k(e)$ in a way that all of them contain $B_{k-1}(C)$ and they are disjoint outside of $B_{k-1}(C)$. Each copy of $B_k(e)$ covers a subset of the variables $A_{v,w}$ and $B_p$ such that the joint differential entropy of this subset of variables is equal to $\mathbb{D}(B_k(e),\nu)$. Now Lemma \ref{diffincform} finishes the proof of the first inequality. The other two inequalities can be seen in a similar way. If we have equality everywhere for every $k$, then by lemma \ref{diffincform} we get that the joint distribution of $\nu$ on $B_k(C)$ is the conditionally independent coupling of $d$ copies of $B_k(e)$ over $B_{k-1}(C)$ and the joint distribution on $B_k(e)$ is the conditionally independent coupling of $2$ copies of $B_ {k-1}(C)$ over $B_{k-1}(e)$. By induction the $2$-markov property follows inside $B_k(C)$ for every $k$ and thus for the whole process. 
 \end{proof}

 Let $\alpha(S,\nu)$ denote the difference $\mathbb{D}(\mathcal{Q}(S,p,\nu))-\mathbb{D}(\mathcal{Q}(S,p,\mu))$ where $\mu$ is the Gaussian eigenvector process with the same eigenvalue as $\nu$. If we apply the same change of basis to both $\mathbb{D}(\mathcal{Q}(S,p,\nu))$ and $\mathbb{D}(\mathcal{Q}(S,p,\mu))$, they change with the same additive constant by lemma \ref{difftrdet} and thus $\alpha(S,\nu)$ remains unchanged. This shows the basis independence of $\alpha(S,\nu)$. In particular we have that if $\nu$ is a smooth eigenvector process then $\alpha(S,\nu)=\mathbb{D}_{\rm sp}(S,\nu)-\mathbb{D}_{\rm sp}(S,\mu)$ and so
$$\alpha(B_k(C),\nu)-(d/2)\alpha(B_k(e),\nu)=$$ 
\begin{equation}\label{imprlem3eq}
 \Bigl(\mathbb{D}_{\rm sp}(B_k(C),\nu)-(d/2)\mathbb{D}_{\rm sp}(B_k(e),\nu)\Bigr)-\Bigl(\mathbb{D}_{\rm sp}(B_k(C),\mu)-(d/2)\mathbb{D}_{\rm sp}(B_k(e),\mu)\Bigr).
 \end{equation}

Using Theorem \ref{mainineq} and Theorem \ref{thm:nulla} we get the following consequence. 

\begin{proposition}\label{improp} If $\nu$ is a smooth typical eigenvector process  then $$\limsup_{k\to\infty} \alpha(B_k(C),\nu)-(d/2)\alpha(B_k(e),\nu)=\limsup_{k\to\infty}\mathbb{D}_{\rm sp}(B_k(C),\nu)-(d/2)\mathbb{D}_{\rm sp}(B_k(e),\nu)\geq 0.$$
 \end{proposition}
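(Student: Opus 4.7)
\noindent The plan is to combine the two main inputs already established, namely Theorem \ref{mainineq} and Theorem \ref{thm:nulla}, via the identity \eqref{imprlem3eq}. The proposition is actually a direct consequence, and the work reduces to bookkeeping.

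First I would establish the claimed equality of the two limsups. Let $\mu$ denote the Gaussian wave with the same eigenvalue $\lambda$ as $\nu$. Starting from \eqref{imprlem3eq},
\begin{equation*}
\alpha(B_k(C),\nu)-\tfrac{d}{2}\alpha(B_k(e),\nu)=\Bigl(\mathbb{D}_{\rm sp}(B_k(C),\nu)-\tfrac{d}{2}\mathbb{D}_{\rm sp}(B_k(e),\nu)\Bigr)-\Bigl(\mathbb{D}_{\rm sp}(B_k(C),\mu)-\tfrac{d}{2}\mathbb{D}_{\rm sp}(B_k(e),\mu)\Bigr),
\end{equation*}
Theorem \ref{thm:nulla} asserts exactly that the second bracket on the right vanishes as $k\to\infty$. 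Since $\nu$ is smooth by hypothesis, both terms in the first bracket are finite, so taking $\limsup_{k\to\infty}$ on both sides preserves the equation and removes the Gaussian correction, yielding the first claimed equality.

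Second, the non-negativity of the limsup follows instantly from Theorem \ref{mainineq}: since $\nu$ is a smooth typical eigenvector process, $\mathbb{D}_{\rm sp}(B_k(C),\nu)-\tfrac{d}{2}\mathbb{D}_{\rm sp}(B_k(e),\nu)\geq 0$ holds for every $k$, so the $\limsup$ of this nonnegative sequence is $\geq 0$.

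Since both steps are immediate applications of earlier results, there is no genuine obstacle at this stage; the one thing to be a little careful about is confirming that smoothness of $\nu$ guarantees finiteness of $\mathbb{D}_{\rm sp}(B_k(C),\nu)$ and $\mathbb{D}_{\rm sp}(B_k(e),\nu)$ so that the difference in \eqref{imprlem3eq} makes sense term by term (this is built into the definition of smooth from Chapter \ref{smooth}), and that the Gaussian wave $\mu=\Psi_\lambda$ is itself smooth so that Theorem \ref{thm:nulla} applies verbatim. Both points are already in place, so the proposition follows in a few lines.
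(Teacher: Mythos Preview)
Your proof is correct and follows exactly the same route as the paper: use the identity \eqref{imprlem3eq}, apply Theorem \ref{thm:nulla} to make the Gaussian term vanish in the limit, and apply Theorem \ref{mainineq} to get nonnegativity. The paper's proof is in fact just two sentences to the same effect.
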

 
 \begin{proof}  By Theorem \ref{mainineq} the first term in (\ref{imprlem3eq}) is non negative and by Theorem \ref{thm:nulla} the second term converges to $0$. This completes the proof.
 \end{proof}
 
 The main theorem of this chapter is the following.
 
 \begin{theorem}\label{thm:imprmain} If $\nu$ is a smooth typical eigenvector process, then $\alpha(C,\nu)-(d/2)\alpha(e,\nu)\geq 0$ and equality implies that $\nu$ is $2$-Markov.
 \end{theorem}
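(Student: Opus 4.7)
The plan is to combine the monotonicity statement in Lemma \ref{imprlem2} with the asymptotic lower bound from Proposition \ref{improp}. Let me write $a_k := \alpha(B_k(C),\nu)-(d/2)\alpha(B_k(e),\nu)$. First I would observe that since the Gaussian wave $\mu$ satisfies equality in Lemma \ref{imprlem2}, the quantity $\mathbb{D}(B_k(C),\mu)-(d/2)\mathbb{D}(B_k(e),\mu)$ is \emph{constant} in $k$. Substituting into \eqref{imprlem3eq}, this means $a_k$ inherits exactly the monotonicity behaviour of $\mathbb{D}(B_k(C),\nu)-(d/2)\mathbb{D}(B_k(e),\nu)$, which by the third inequality of Lemma \ref{imprlem2} is non-increasing in $k$.

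Next I would invoke Proposition \ref{improp}, which says $\limsup_{k\to\infty} a_k\geq 0$. Since $a_k$ is non-increasing, $\limsup_{k\to\infty} a_k = \inf_k a_k$, so $a_k \geq 0$ for every $k$. Specialising to $k=0$ gives $a_0 = \alpha(C,\nu)-(d/2)\alpha(e,\nu)\geq 0$, which is the first assertion of the theorem.

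For the rigidity statement, suppose $a_0 = 0$. Monotonicity forces $a_k \leq 0$ for all $k\geq 1$, while the lower bound above forces $a_k \geq 0$; hence $a_k = 0$ for every $k$. Translating back via \eqref{imprlem3eq} and the constancy of the Gaussian quantity, this is exactly equality in the third inequality of Lemma \ref{imprlem2} for every $k\geq 1$. A short algebraic check shows that the third inequality of that lemma is obtained by adding the first and the second (with a factor $d/2$ on the second), so equality in the sum forces equality in each of the first two inequalities for every $k$. Lemma \ref{imprlem2} then concludes that $\nu$ is $2$-Markov.

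The conceptual work has already been done in the earlier chapters, so no step here should present a serious obstacle. The only point requiring a little care is the elementary verification that the third inequality of Lemma \ref{imprlem2} is the sum of the first two, since Lemma \ref{imprlem2} only records the $2$-Markov conclusion under simultaneous equality in \emph{all three} inequalities; getting $2$-Markovness from equality in the third alone is what makes this derivation possible.
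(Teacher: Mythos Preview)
Your proof is correct and follows essentially the same approach as the paper: establish monotonicity of $a_k$ from the third inequality of Lemma \ref{imprlem2} together with the Gaussian equality case, combine with Proposition \ref{improp} to get $a_0\geq 0$, and in the equality case deduce $a_k=0$ for all $k$ and hence equality throughout Lemma \ref{imprlem2}. Your algebraic observation that the third inequality is the first plus $d/2$ times the second is exactly the justification (left implicit in the paper) for why equality in the third forces equality in the first two.
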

 
 \begin{proof} By iterating the third inequality in Lemma \ref{imprlem2} we obtain that 
 $$\mathbb{D}(B_k(C),\nu)-(d/2)\mathbb{D}(B_k(e),\nu)\leq\mathbb{D}(B_l(C),\nu)-(d/2)\mathbb{D}(B_l(e),\nu)$$
 holds for every $k\geq l\geq 0$. Furthermore if we replace $\nu$ in the above formula by the Gaussian wave $\mu$ then we get equality. This implies that
\begin{equation}\label{imprtheq}
\alpha(B_k(C),\nu)-(d/2)\alpha(B_k(e),\nu)\leq\alpha(B_l(C),\nu)-(d/2)\alpha(B_l(e),\nu)
\end{equation}
 holds for every $k\geq l\geq 0$. By Proposition \ref{improp} we get the inequality of the theorem by applying \eqref{imprtheq} with $l=0$ and $k\to\infty$.
 For the second statement assume that $\alpha(C,\nu)-(d/2)\alpha(e,\nu)=0$. By (\ref{imprtheq}) this is only possible if $\alpha(B_k(C),\nu)-(d/2)\alpha(B_k(e),\nu)=0$ holds for every $k$ and thus the inequalities of Lemma \ref{imprlem2} are all equalities. This implies that $\nu$ is $2$-Markov. \hfill $\square$
 \end{proof}

 \section{Heat equation and the proof of the main theorem}

\label{chap:heat}

\begin{definition}\label{stardist}  For $\lambda\in [-2\sqrt{d-1},2\sqrt{d-1}]$ and $d\geq 3$ let $\mathcal{F}_{d,\lambda}$ denote the set of joint distributions $F=(X_1,X_2,\dots,X_d,Z)$ of real valued random variables such that
\begin{enumerate}
\item $\mathbb{E}(X_iX_j)=(\lambda^2-d)d^{-1}(d-1)^{-1}$, $\mathbb{E}(X_iZ)=\lambda/d$, $\mathbb E(X_i)=\mathbb E(Z)=0$  and $\mathbb{E}(X_i^2)=\mathbb{E}(Z^2)=1$ holds for $1\leq i,j\leq d$ and $i\neq j$;
\item the joint distribution $(X_1,X_2,\dots,X_d,Z)$ is symmetric under every permutation that fixes $Z$;
\item for every $1\leq i\leq d$ the joint distribution $(X_i,Z)$ is the same as the joint distribution $(Z,X_i)$;
\item the quantities $\mathbb D_{\rm sp}(X_1,X_2,\dots,X_d,Z)$ and $\mathbb D(X_1,Z)$ are both finite.
\end{enumerate}
We define the function $\mathcal{D}:\mathcal{F}_{d,\lambda}\rightarrow\mathbb{R}$ by $\mathcal{D}(F)=\mathbb D_{\rm sp}(X_1,X_2,\dots,X_d,Z)-\frac{d}{2}\mathbb D(X_1,Z)$.
\end{definition}

Notice that the covariance conditions of definition \ref{stardist} guarantee that $\mathbb{E}((X_1+X_2+\dots+X_d-\lambda Z)^2)=0$ and thus $X_1+X_2+\dots+X_d=\lambda Z$ holds with probability $1$.  This implies that the joint distribution $F$ is concentrated on the $1$ co-dimensional ($d$-dimensional) subspace $W_\lambda(C)$ in $\mathbb{R}^{d+1}$. The subspace differential entropy in definition \ref{stardist} is measured in this subspace.

In this chapter we think of $\lambda\in[-2\sqrt{d-1},2\sqrt{d-1}]$ and $d$ as fixed values and most of the times our notation will not indicate the dependence on these values even if there is such a dependence.  Our goal is to solve the extremal problem of maximizing $\mathcal{D}$ inside $\mathcal{F}_{d,\lambda}$ (see Theorem \ref{gausscond}). This will provide the last step in the proof of our main theorems (see Theorem \ref{main} and Theorem \ref{mainlim}) as we will explain at the end of this chapter.

 It will be important that there is a unique element $F^*=(X_1^*,X_2^*,\dots,X_d^*,Z^*)$ in $\mathcal{F}_{d,\lambda}$ such that $F^*$ is Gaussian. (The covariances define the Gaussian system uniquely, which clearly satisfies the symmetry conditions.) Note that $F^*$ depends on both $d$ and $\lambda$.
Most of this chapter deals with the proof of the the next theorem which says that the entropy formula $\mathcal{D}$ in $\mathcal{F}_{d,\lambda}$ is maximized by the Gaussian distribution $F^*$.

\begin{theorem}\label{gausscond} For every $F\in\mathcal{F}_{d,\lambda}$ we have that $\mathcal{D}(F)\leq\mathcal{D}(F^*)$ and equality holds if and only if $F=F^*$.
\end{theorem}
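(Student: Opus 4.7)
The plan is to attack Theorem \ref{gausscond} via the Ornstein--Uhlenbeck semigroup on $\mathcal{F}_{d,\lambda}$ combined with de Bruijn's identity, reducing the desired entropy inequality to a Fisher information inequality adapted to the symmetries of $\mathcal{F}_{d,\lambda}$. Concretely, given $F\in\mathcal{F}_{d,\lambda}$, pick $V\sim F$ and $V^*\sim F^*$ independent and set $F_t$ to be the law of $e^{-t}V+\sqrt{1-e^{-2t}}V^*$. Since $F$ and $F^*$ have identical covariances on $W_\lambda(C)$, the covariance of $F_t$ stays the same; the permutation symmetry, the swap symmetry $(X_i,Z)\sim(Z,X_i)$, the eigenvector identity $\sum X_i=\lambda Z$, and finiteness of the relevant entropies are all preserved under independent addition, so $F_t\in\mathcal{F}_{d,\lambda}$ for all $t\geq0$. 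Clearly $F_0=F$ and $F_t$ converges weakly to $F^*$ as $t\to\infty$, with all relevant moments and entropies converging.

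Since $F^*$ is the invariant measure of the OU semigroup on $W_\lambda(C)$, de Bruijn's identity gives
\begin{equation*}
\frac{d}{dt}\mathbb{D}_{\rm sp}(F_t)=I_{\rm sp}(F_t\,\|\,F^*),
\end{equation*}
where $I_{\rm sp}(\cdot\,\|\,\cdot)$ denotes the relative Fisher information inside the subspace $W_\lambda(C)$. The two-dimensional marginal $(X_1,Z)_t$ solves the induced OU equation on $\mathbb{R}^2$ with stationary law $(X_1^*,Z^*)$, so likewise
\begin{equation*}
\frac{d}{dt}\mathbb{D}\bigl((X_1,Z)_t\bigr)=I\!\bigl((X_1,Z)_t\,\|\,(X_1^*,Z^*)\bigr).
\end{equation*}
Consequently
\begin{equation*}
\frac{d}{dt}\mathcal{D}(F_t)=I_{\rm sp}(F_t\,\|\,F^*)-\tfrac{d}{2}\,I\!\bigl((X_1,Z)_t\,\|\,(X_1^*,Z^*)\bigr),
\end{equation*}
and since $\mathcal{D}(F_t)\to\mathcal{D}(F^*)$ as $t\to\infty$, proving $\mathcal{D}(F)\leq\mathcal{D}(F^*)$ reduces to showing that the right-hand side is non-negative for every $t\geq 0$.

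This Fisher information inequality is the main obstacle, and it is where the structural hypotheses in Definition \ref{stardist} must be used in full. The plan is to exploit the $S_d$-action permuting the $X_i$ together with the swap symmetry so that the $d$ pair marginals $(X_i,Z)_t$ are pairwise identically distributed. I would decompose the score $\nabla\log(f_t/f^*)$ of $F_t$ relative to $F^*$ in an orthonormal frame of $W_\lambda(C)$ adapted to the $S_d$-representation (one totally symmetric direction together with the $(d-1)$-dimensional standard representation orthogonal to it), and compare these components with the score of the marginal $(X_1,Z)_t$ relative to $(X_1^*,Z^*)$ using the data processing inequality for relative Fisher information. The factor $d/2$ should emerge because the $d$ pair marginals together cover each $X_i$ once and $Z$ exactly $d$ times, and the swap symmetry then pairs the two types of directions so that one factor of $2$ is absorbed.

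Monotonicity of $\mathcal{D}(F_t)$ and its limit $\mathcal{D}(F^*)$ then yield $\mathcal{D}(F)\leq\mathcal{D}(F^*)$. For the equality case, if $\mathcal{D}(F)=\mathcal{D}(F^*)$ then $\frac{d}{dt}\mathcal{D}(F_t)\equiv 0$ on $(0,\infty)$, forcing equality in the Fisher information inequality for every $t>0$. Tracing the equality case through the Cauchy--Schwarz/data-processing step should force the score of $F_t$ relative to $F^*$ to be linear, hence $F_t=F^*$ for every $t>0$, and letting $t\to 0^+$ using continuity of the OU flow identifies $F$ with $F^*$. The whole argument then slots into the deduction of Theorem \ref{mainlim}: combining Theorem \ref{thm:imprmain} (which says that the Gaussian \emph{minimizes} $\alpha(C,\nu)-(d/2)\alpha(e,\nu)$ among smooth typical eigenvector processes) with Theorem \ref{gausscond} (which says the Gaussian \emph{maximizes} $\mathcal{D}$ among distributions in $\mathcal{F}_{d,\lambda}$) pinches $\mathcal{D}(F_{(X_v)_{v\in C}})=\mathcal{D}(F^*)$, forcing the marginal on the star to be Gaussian and, by the $2$-Markov conclusion of Theorem \ref{thm:imprmain}, the whole process to coincide with the Gaussian wave $\Psi_\lambda$.
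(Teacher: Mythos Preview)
Your overall architecture---run a Gaussian semigroup inside $\mathcal{F}_{d,\lambda}$, differentiate $\mathcal D$ via de Bruijn, and reduce to a Fisher-information inequality---is exactly the paper's strategy (the paper uses heat flow $F+\sqrt{2t}F^*$ and rescales, which is equivalent to your OU flow). The place where your outline becomes a genuine gap is the Fisher-information inequality itself. Your heuristic ``the $d$ pair marginals together cover each $X_i$ once and $Z$ exactly $d$ times, and the swap symmetry absorbs a factor of $2$'' does not yield $I_{\rm sp}(F_t\|F^*)\geq\tfrac d2 I((X_1,Z)_t\|(X_1^*,Z^*))$. Plain data processing gives only $I_{\rm sp}\geq I_{\rm marginal}$ for each $i$, and summing $d$ equal copies of the same inequality gains nothing; the $d$ planes $\langle v_i,w\rangle$ all contain $w$, so the ``coverage count'' you describe does not correspond to an orthogonal decomposition of $W_\lambda(C)$, and there is no generic representation-theoretic reason for the constant $d/2$ to appear.

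What the paper actually does here is a delicate geometric step (Lemma~\ref{wr}): it constructs, for each $i$, an orthonormal basis $a_i,b_i$ of the plane $\langle v_i,w\rangle$ (a specific rotation of $v_i,w$ depending on $\lambda$) and proves the frame identity
\[
\|u\|_2^2=\sum_{i=1}^d\bigl(t_1(u,a_i)^2+t_2(u,b_i)^2\bigr),\qquad t_1,t_2\geq 0,\ t_1+t_2=1,
\]
which is \emph{exactly} where the hypothesis $|\lambda|\leq 2\sqrt{d-1}$ enters---for larger $|\lambda|$ the weights $t_1,t_2$ cannot both be taken non-negative and the inequality fails. This identity decomposes the full Fisher information as $d$ times a convex combination of two directional Fisher informations inside $V_1$; a Cauchy--Schwarz projection (Lemma~\ref{csch}) bounds each directional piece below by the corresponding piece of the marginal Fisher information; and only then does the swap symmetry $(X_1,Z)\sim(Z,X_1)$ equate the $a_1$- and $b_1$-contributions so that $t_1+t_2=1$ produces the clean factor $d/2$. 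Your sketch neither introduces the $a_i,b_i$ frame nor uses the spectral-gap hypothesis, so as written it cannot deliver the constant.

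The equality case is also substantially harder than ``the score must be linear''. Equality in Lemma~\ref{csch} only says that $\partial_{a_1}\log f$ (or $\partial_{b_1}\log f$) is constant along $V_1^\perp$, i.e.\ a conditional-independence statement, not linearity. The paper then needs two further lemmas: one (Lemma~\ref{twoposs}) showing that if this conditional independence persists for \emph{all} $t$ under heat flow it forces genuine independence of a factor, and one (Lemma~\ref{indepgauss}) showing that having all the $a_i$ (or a related system) as independent directions forces Gaussianity. There is also a separate endgame in the case $t_1=1$. None of this is visible in your outline.
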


To get rid of the subspace differential entropy, we apply a change of basis to the systems in $\mathcal{F}_{d,\lambda}$. We can choose a fix linear transformation $T:\mathbb{R}^{d+1}\to \mathbb{R}^d$ (depending on $d$ and $\lambda$) such that for every $F\in\mathcal{F}_{d,\lambda}$ the system $T(F)=(B_1,B_2,\dots,B_d)$ satisfies $\mathbb{E}(B_iB_j)=\delta_{i,j}$ for $1\leq i,j\leq d$.  Using that $M=T(F^*)$ is Gaussian we obtain that $M$ is the standard normal distribution on $\mathbb{R}^d$. 
Using that linear transformations change differential entropy with a fix constant (depending on the transformation; see Lemma \ref{difftrdet}), the statement of the theorem is equivalent to the fact that 
$$\mathbb D(T(F))-(d/2)\mathbb D(X_1,Z)\leq \mathbb D(M)-(d/2) \mathbb D(X_1^*,Z^*)$$ and equality holds if and only if $T(F)=M$. The proof of Theorem \ref{gausscond} relies on the following proposition.

\medskip

\begin{proposition}\label{gaussprop} Let $F=(X_1,X_2,\dots,X_d,Z)\in\mathcal{F}_{d,\lambda}$ and $F_t=F+\sqrt{2t}F^*$ (using independent sum) for every $t>0$. Then the function $$\Lambda_F(t)=\mathbb D(T(F_t))-(d/2)\mathbb D(X_1+\sqrt{2t}X_1^*,Z+\sqrt{2t}Z^*)$$ satisfies $\Lambda_F'(0)\geq 0$. If $F$ is not Gaussian then $\Lambda_F'(t)> 0$ for some $t\geq 0$.
\end{proposition}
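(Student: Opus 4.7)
I would begin by applying de Bruijn's identity to each term of $\Lambda_F(t)$. Since $T$ was chosen so that $T(F^*)$ is the standard Gaussian on $\mathbb{R}^d$, the perturbation $T(F_t) = T(F) + \sqrt{2t}\,T(F^*)$ is an isotropic Gaussian heat flow in $d$ variables, and de Bruijn gives $\tfrac{d}{dt}\mathbb{D}(T(F_t)) = \mathrm{tr}\,J(T(F_t))$, where $J$ denotes the Fisher information matrix. In parallel, the $(X_1, Z)$-marginal receives Gaussian noise $(X_1^*, Z^*)$ with covariance $\Sigma_2 = \begin{pmatrix} 1 & \lambda/d \\ \lambda/d & 1 \end{pmatrix}$, which yields $\tfrac{d}{dt}\mathbb{D}((X_1, Z)_t) = \mathrm{tr}\bigl(\Sigma_2\,J((X_1, Z)_t)\bigr)$. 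Consequently $\Lambda_F'(0) \ge 0$ is equivalent to the Fisher-information inequality
\[ \mathrm{tr}\,J(T(F)) \;\geq\; (d/2)\,\mathrm{tr}\bigl(\Sigma_2\,J(X_1, Z)\bigr). \]
For $F = F^*$ both sides equal $d$, by Cram\'er--Rao applied with $\mathrm{Cov}(T(F^*)) = I_d$ and $\mathrm{Cov}(X_1^*, Z^*) = \Sigma_2$, so equality holds in the Gaussian case.

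The core of the proof is establishing the displayed Fisher-information inequality for arbitrary $F \in \mathcal{F}_{d,\lambda}$. The strategy is to exploit the three symmetries in Definition \ref{stardist}: the permutation invariance of $(X_1, \dots, X_d)$, the swap symmetry $(X_i, Z) \sim (Z, X_i)$, and the linear constraint $\sum_i X_i = \lambda Z$. The algebraic tool is the marginalization identity $\nabla \log p_{(X_i, Z)}(x_i, z) = \mathbb{E}\bigl[\nabla_{(x_i, z)} \log p_F \mid X_i = x_i,\, Z = z\bigr]$, which, combined with Cauchy--Schwarz and averaged over the $d$ edges (all equivalent by permutation symmetry), turns $(d/2)\,\mathrm{tr}(\Sigma_2\,J(X_1, Z))$ into a symmetric bilinear form in the joint score that should be controlled pointwise by $\mathrm{tr}\,J(T(F))$. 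The factor $d/2$ is combinatorially natural: the center $Z$ is shared by all $d$ edges while each $X_i$--$Z$ pair is counted twice by the swap symmetry. This inequality is the Fisher-information analog of the entropy inequality $\mathbb{H}(B_0(C)) \geq (d/2)\,\mathbb{H}(B_0(e))$ from Theorem \ref{discmain}, but since no typicality is assumed here it must be proved \emph{directly} from the defining symmetries of $\mathcal{F}_{d,\lambda}$. Extracting the correct weighted sum in this direct way is, I expect, the main obstacle.

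For the second assertion, suppose for contradiction that $F$ is not Gaussian yet $\Lambda_F'(t) \equiv 0$ on $[0, \infty)$. By de Bruijn the Fisher-information inequality is then an equality at every $F_t$. The equality case, which one expects to characterize Gaussianity (paralleling the equality analysis in Theorem \ref{thm:imprmain}), would force each $F_t$ with $t > 0$ to be Gaussian; but a standard characteristic-function argument then shows that the Gaussian convolution factor $F$ must itself be Gaussian, contradicting the hypothesis. Hence $\Lambda_F'(t) > 0$ for some $t \geq 0$.
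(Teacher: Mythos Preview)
Your framework is correct and matches the paper's: de~Bruijn reduces $\Lambda_F'(0)\ge 0$ to a Fisher-information inequality, the permutation symmetry lets you average over the $d$ edges, and the Cauchy--Schwarz/data-processing bound for scores under marginalization is indeed the mechanism that compares the joint Fisher information to the edge Fisher information. Your formula $\mathrm{tr}\,J(T(F))\ge (d/2)\,\mathrm{tr}(\Sigma_2 J(X_1,Z))$ is equivalent to what the paper proves.

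However, you are missing the one concrete ingredient that makes the inequality go through, and you do not mention where the hypothesis $|\lambda|\le 2\sqrt{d-1}$ is used. The paper does \emph{not} prove the Fisher inequality by a soft combinatorial/symmetry argument of the kind you sketch. Instead it constructs, in the standardized coordinates on $\mathbb{R}^d$, an explicit system of unit vectors $\{a_i,b_i\}_{i=1}^d$ (obtained from $\{v_i\}_{i=1}^d$ and $w$ via a fixed rotation in each plane $\langle v_i,w\rangle$) together with nonnegative weights $t_1,t_2$ summing to $1$ such that
\[
\|u\|_2^2=\sum_{i=1}^d\bigl(t_1(u,a_i)^2+t_2(u,b_i)^2\bigr)\qquad(u\in\mathbb{R}^d).
\]
This frame-type decomposition of the identity (Lemma~\ref{wr}) is exactly where the constraint $|\lambda|\le 2\sqrt{d-1}$ enters: for larger $|\lambda|$ one of $t_1,t_2$ becomes negative and the argument collapses. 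Once you have this decomposition, $\mathrm{tr}\,J(T(F))$ splits as a sum over $i$ of directional Fisher informations along $a_i$ and $b_i$; Cauchy--Schwarz (Lemma~\ref{csch}) bounds each from below by the corresponding directional Fisher information of the two-dimensional marginal $f_i$; and finally the swap symmetry $(X_1,Z)\sim(Z,X_1)$ forces the $a_1$- and $b_1$-directional Fisher informations of $f_1$ to coincide, which is what collapses the convex combination $t_1\cdot(\ )+t_2\cdot(\ )$ to $\tfrac12\cdot(\ )+\tfrac12\cdot(\ )$ and produces the coefficient $d/2$. Your proposal identifies the symmetries but not this geometric decomposition, and you yourself flag the ``correct weighted sum'' as the obstacle---this lemma is that weighted sum.

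Your treatment of the second assertion is also too optimistic. Equality in the Cauchy--Schwarz step at a fixed time $t$ does \emph{not} by itself force $F_t$ to be Gaussian; it only says that certain directional scores of $f$ factor through the two-dimensional marginal, i.e.\ that a conditional-independence relation holds. The paper assumes $\Lambda_F'(t)=0$ for all $t\ge 0$, so this conditional independence persists under the heat flow, and then invokes a separate analytic lemma (Lemma~\ref{twoposs}): if $X_t$ and $Z_t$ are conditionally independent given $Y_t$ for \emph{every} $t\ge 0$, then already at $t=0$ one has genuine independence of $X$ from $(Y,Z)$ or of $Z$ from $(X,Y)$. A further case analysis (using either Lemma~\ref{indepgauss} on ``independent directions'' or an explicit integration of the score equations) then yields Gaussianity. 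So the equality case requires real work beyond ``equality characterizes Gaussianity''; your characteristic-function remark at the end is fine once one knows some $F_t$ is Gaussian, but getting there is the content.
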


\medskip

Note that the choice of $F_t$ comes from the heat equation in $\mathbb{R}^d$ (see chapter \ref{heatedrandom}). We first show that Proposition \ref{gaussprop} implies Theorem \ref{gausscond}. The joint distribution $F_t$ does not satisfy the covariance conditions of definition \ref{stardist} but it is clear that the scaled version $F_t(1+2t)^{-1/2}$ is in $\mathcal{F}_{d,\lambda}$. Notice that scaling does not change the differential entropy formula because the extra additive constants coming from scaling exactly cancel each other. By using the claim for $G=F_t(1+2t)^{-1/2}$ we obtain from $\Lambda_G'(0)\geq 0$ that $\Lambda_F'(t)\geq 0$ holds for $F$ with every $t\geq 0$. Since $F_t(1+2t)^{-1/2}=F(1+2t)^{-1/2}+F^*(2t/(1+2t))^{1/2}$ converges to $F^*$ as $t\rightarrow \infty$, we obtain that $F^*$ maximizes $\mathcal{D}$. To see that only the Gaussian system $F^*$ attains the maximum assume that $F$ attains the maximum.  In this case $\Lambda_F'(t)\geq 0$ is only possible if $\Lambda_F'(t)=0$ holds for every $t\geq 0$. This implies that $F$ is Gaussian by the second part of Proposition \ref{gaussprop}. 

It remains to prove Proposition \ref{gaussprop}. We start with some notation and lemmas. Assume that the measure $\mu$ is the distribution and $f$ is the density function of $B=T(F)$.  
 We can choose  a matrix $Q\in \mathbb R^{(d+1)\times d}$ such that $X_i=\sum_k Q_{i,k}B_k$ holds for $i=1, \ldots, d$, and $Z=\sum_k Q_{d+1, k} B_k$ is also satisfied. We define $v_i=(Q_{i, 1}, \ldots, Q_{i, d})$ for $i=1, \ldots, d$ and $w=(Q_{d+1, 1}, \ldots, Q_{d+1, d})$. Notice that the covariance matrix of $F$ is $QQ^T$.
  Hence the vectors  $\{v_i\}_{i=1}^d$ and $w$ are unit vectors such that $(v_i,v_j)=(\lambda^2-d)d^{-1}(d-1)^{-1}$ and $(v_i,w)=\lambda/d$ for every $1\leq i,j\leq d$. The joint distribution $((v_1,B),(v_2,B),\dots,(v_d,B),(w,B))$ is the same as $F$. We have that $\mu$ is invariant with respect to every orthogonal transformation permuting the system $\{v_i\}_{i=1}^d$ and fixing $w$. Furthermore we have that $\mu$ projected to the space spanned by $v_i$ and $w$ is invariant with respect to the reflection interchanging $v_i$ and $w$. We can choose two real numbers $\alpha$ and $\beta$ such that the vectors $a_i=\alpha w+\beta v_i$ and $b_i=\alpha v_i+\beta w$ satisfy $(a_i,b_i)=0$ and $\|a_i\|_2=\|b_i\|_2=1$ for every $1\leq i\leq d$ (in particular, we need that $(\alpha^2+\beta^2)\lambda/d+2\alpha \beta=0$, and $(\alpha^2+\beta^2)+2\alpha\beta\lambda/d=1$). The choice of $\alpha$ and $\beta$ is unique up to multiplying both by $-1$ or switching them. Their values can be determined using elementary geometry.

Note that construction of the vector system $\{v_i\}_{i=1}^d, w, \{a_i\}_{i=1}^d, \{b_i\}_{i=1}^d$ in $\mathbb{R}^d$ is purely linear algebraic. Such system, with the scalar products given above, can be constructed for an arbitrary $|\lambda|\leq d$ however in the case of $|\lambda|\leq2\sqrt{d-1}$ they satisfy a useful geometric property expressed in the following lemma.  

\begin{lemma}\label{wr} If $|\lambda|\leq2\sqrt{d-1}$, then there are numbers $t_1,t_2$ with $t_1,t_2\geq 0$ and $t_1+t_2=1$ such that for every $u\in\mathbb{R}^d$ we have
$$\|u\|_2^2=\sum_{i=1}^d \Bigl(t_1(u,a_i)^2+t_2(u,b_i)^2\Bigr).$$

\end{lemma}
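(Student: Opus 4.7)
The plan is to recast the identity as a single matrix equation and reduce it to a one-parameter algebraic condition on $t_1, t_2$. By polarization, the claim is equivalent to
\[
t_1 \sum_{i=1}^d a_i a_i^T + t_2 \sum_{i=1}^d b_i b_i^T = I.
\]
Expanding $a_i = \alpha w + \beta v_i$ and $b_i = \alpha v_i + \beta w$, and using $\sum_i v_i = \lambda w$ (which is just $\sum_i X_i = \lambda Z$ transposed to the geometric side), I would first obtain
\[
\sum_i a_i a_i^T = (d\alpha^2 + 2\alpha\beta\lambda)\, ww^T + \beta^2 V, \qquad \sum_i b_i b_i^T = (d\beta^2 + 2\alpha\beta\lambda)\, ww^T + \alpha^2 V,
\]
where $V := \sum_i v_i v_i^T$. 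To compute $V$, I would exploit the $S_d$-symmetry of the system $\{v_i\}$: since each permutation of the indices fixes $w$ and acts on $w^\perp$ as the standard (irreducible) representation, $V$ preserves the splitting $\mathbb{R}^d = \mathbb{R}w \oplus w^\perp$ and is scalar on each piece. From $Vw = \sum_i (v_i, w)\, v_i = (\lambda/d)\sum_i v_i = (\lambda^2/d)w$ and $\mathrm{tr}\, V = d$, I would get
\[
V = \frac{d^2-\lambda^2}{d(d-1)}\, I + \frac{\lambda^2-d}{d-1}\, ww^T.
\]

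Substituting and collecting, the desired equation becomes an affine combination of $I$ and $ww^T$; matching coefficients yields two scalar conditions. The $I$-coefficient gives
\[
\text{(A)}\qquad t_1\beta^2 + t_2\alpha^2 = \frac{d(d-1)}{d^2-\lambda^2},
\]
while the $ww^T$-coefficient gives a second equation. I would then derive the identities $\alpha^2+\beta^2 = d^2/(d^2-\lambda^2)$ and $\alpha\beta = -\lambda d/(2(d^2-\lambda^2))$ from the stated constraints $\|a_i\|_2^2 = 1$ and $(a_i, b_i) = 0$, and verify by direct computation that, given (A) and $t_1+t_2 = 1$, the $ww^T$-coefficient automatically vanishes. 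So the full matrix identity reduces to the single linear constraint (A) on the probability simplex.

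Finally, as $(t_1, t_2)$ ranges over $\{t_1, t_2 \geq 0,\ t_1+t_2=1\}$, the quantity $t_1\beta^2+t_2\alpha^2$ sweeps the closed interval with endpoints $\alpha^2$ and $\beta^2$. Solving the quadratic $t^2 - \frac{d^2}{d^2-\lambda^2}\, t + \frac{\lambda^2 d^2}{4(d^2-\lambda^2)^2} = 0$ (whose roots are $\alpha^2, \beta^2$) gives $\{\alpha^2, \beta^2\} = \{ d(d \pm \sqrt{d^2-\lambda^2})/(2(d^2-\lambda^2))\}$. The inequality $\max(\alpha^2, \beta^2) \geq d(d-1)/(d^2-\lambda^2)$ then collapses to $\sqrt{d^2-\lambda^2} \geq d-2$, equivalently $\lambda^2 \leq 4(d-1)$, which is exactly the hypothesis $|\lambda| \leq 2\sqrt{d-1}$; the opposite inequality $\min(\alpha^2, \beta^2) \leq d(d-1)/(d^2-\lambda^2)$ reduces to $\sqrt{d^2-\lambda^2} \geq 2-d$ and is automatic for $d \geq 2$. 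Thus (A) is solvable in the required range, proving the lemma.

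The main obstacle I anticipate is the algebraic step showing that the $ww^T$-coefficient vanishes automatically once (A) and $t_1+t_2=1$ are imposed; this cancellation is what makes the two matching conditions compatible, reducing the 2-parameter problem to the single interval-membership question that yields the sharp bound $|\lambda| \leq 2\sqrt{d-1}$. Everything else is elementary linear algebra and a quadratic formula.
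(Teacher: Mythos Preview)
Your proof is correct and follows essentially the same route as the paper's: both decompose along $\mathbb{R}w \oplus w^\perp$, use the $S_d$-symmetry to reduce the matrix identity to a pair of scalar matching conditions, observe that one of them is redundant once $t_1+t_2=1$ is imposed, and then solve the remaining convex-combination condition to extract the sharp bound $|\lambda|\le 2\sqrt{d-1}$. The only cosmetic difference is that you work in the matrix formulation $t_1\sum a_ia_i^T+t_2\sum b_ib_i^T=I$ and parametrize the endpoint interval by $\{\alpha^2,\beta^2\}$, whereas the paper works with the quadratic form $u\mapsto\sum_i(u,a_i)^2$ via an explicit ``umbrella'' identity and parametrizes by $\{(a_i,w)^2,(b_i,w)^2\}=(1\pm\sqrt{1-(\lambda/d)^2})/2$; these are equivalent bookkeepings of the same computation.
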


\begin{proof} The proof follows from two observations. The first one is the following. Let $\{v'_i\}_{i=1}^d$ be a system of unit vectors such that all pairwise scalar products are equal and for all $1\leq i\leq d$ we have $(v'_i,w)=c$ for some $c\leq 1$. (The system $\{a_i\}_{i=1}^d$ satisfies the conditions with $c=\alpha+\beta \lambda/d$, and $\{b_i\}_{i=1}^d$ with $c=\alpha\lambda/d+\beta$.) Then
\begin{equation}\label{umbrella}
\sum_{i=1}^d (u,v_i')^2=(1-c^2)d(d-1)^{-1}\|u-(u,w)w\|_2^2+c^2d(u,w)^2
\end{equation}
holds for every $u\in\mathbb{R}^d$. 
To see this, first notice that $\sum_i v_i'=cd w$, which implies that $(v_i', v_j')=(c^2d-1)/(d-1)$ for $1\leq i<j\leq d$. It follows that we can choose $\gamma \in \mathbb R$ such that the equality $(v_i'-\gamma w, v_j'-\gamma w)=0$ holds for $1\leq i<j\leq d$. On the other hand, we have  $\sum_i (u-(u,w)w, v'_i)=0$. Therefore 
\[\sum_{i=1}^d (u, v_i')^2=\sum_{i=1}^d \big((u-(u,w)w, v_i')^2+(u, w)^2(w, v_i')^2\big).\]
The second term is equal to the second term of $\eqref{umbrella}$. In the first term, we can replace $v_i'$ with $v_i'-\gamma w$. If the latter is equal to zero, then we are done. Otherwise, since $\{v_i'-\gamma w\}_{i=1}^d$ is an orthogonal basis in $w^{\bot}$, we obtain 
\[\sum_{i=1}^d (u, v_i')^2=\|v_i'-\gamma w\|_2^2\|u-(u,w)w\|_2^2+c^2d(u,w)^2.\]
On the other hand, for symmetry reasons, $\|v_i'-\gamma w\|_2^2$ does not depend on $i$. By substituting $u=v_1'$ and using $(v_1', v_j')=(c^2d-1)/(d-1)$ again, we get that the value of this constant is the same as in  equation \eqref{umbrella}.

   The second observation says that if $|\lambda|\leq 2\sqrt{d-1}$ then there exist constants $t_1,t_2\geq 0$ with $t_1+t_2=1$ such that $t_1(a_i,w)^2+t_2(b_i,w)^2=1/d$. First of all note the symmetries of the vector system imply that $(a_i,w)^2,(b_i,w)^2$ are independent from $i$. Elementary calculation shows that the two values $(a_i,w)^2$ and $(b_i,w)^2$ are equal to $(1\pm\sqrt{1-(\lambda/d)^2})/2$. This shows the existence of the constants $t_1,t_2$. We get the statement of the lemma by taking the convex combination of (\ref{umbrella}) applied for $\{a_i\}_{i=1}^d$ and $\{b_i\}_{i=1}^d$ with coefficients $t_1$ and $t_2$. 
\end{proof}\hfill $\square$

\medskip

Now we return to the proof of Proposition \ref{gaussprop}. For $1\leq i\leq d$ let $f_i$ denote the orthogonal projection of $f$ to the two dimensional space $V_i=\langle w,v_i\rangle_{\mathbb{R}}$. This means that $$f_i(x)=\int_{z\in V_i^{\perp}}f(x+z)$$ for $x\in V_i$.
Let $T_2:\mathbb{R}^2\rightarrow\mathbb{R}^2$ denote the linear transformation  $T_2(x,y)=(\alpha x+\beta y, \alpha y+\beta x)$ with $\alpha$ and $\beta$ defined above. We have that $f_i$ (when written in the orthonormal basis $a_i,b_i$) is the density function of $T_2(X_i,Z)$. We can write $\Lambda_F(t)$ as 
\begin{equation}
k+\mathbb D(T(F_t))-(d/2)\mathbb D(T_2(X_1+\sqrt{2t}X_1^*,Z+\sqrt{2t}Z^*))
\end{equation}
where the constant $k$ comes from the change of basis $T_2$.
Then by the de Bruijn identity (see equation \eqref{diffheat} and Lemma \ref{korlat})  we get  $$\Lambda_F'(0)=\int_{\mathbb{R}^d} \|\triangledown f\|_2^2/f-(d/2)\int_{V_1} \|\triangledown f_1\|_2^2/f_1.$$ 
From Lemma \ref{wr}  we have that
\begin{equation}
\|\triangledown f\|_2^2=\sum_{i=1}^d (t_1(\partial_{a_i}f)^2+t_2(\partial_{b_i}f)^2)
\end{equation}
holds for some $t_1, t_2\geq 0$ such that $t_1+t_2=1$.
Using
$$\|\triangledown f_1\|_2^2=(\partial_{a_1}f_1)^2+(\partial_{b_1}f_1)^2$$
and the above equations it follows that
$$\Lambda_F'(0)=\sum_{i=1}^d \int_{\mathbb{R}^d} (t_1(\partial_{a_i}f)^2+t_2(\partial_{b_i}f)^2)/f-(d/2)\int_{V_1} ((\partial_{a_1}f_1)^2+(\partial_{b_1}f_1)^2)/f_1.$$ 
By the symmetries of $f$ we have that the terms in the above sum are all the same and thus
\begin{equation}\label{eq:aa11}
\Lambda_F'(0)=d\int_{\mathbb{R}^d} (t_1(\partial_{a_1}f)^2+t_2(\partial_{b_1}f)^2)/f-(d/2)\int_{V_1} ((\partial_{a_1}f_1)^2+(\partial_{b_1}f_1)^2)/f_1.
\end{equation}

\begin{lemma}\label{csch} If $u\in V_1$, then $$\int_{\mathbb{R}^d} (\partial_uf)^2/f \geq \int_{V_1}(\partial_uf_1)^2/f_1.$$ Equality holds if and only if the function $g=\partial_u f(x)/f(x)=\partial_u \log f$ satisfies $g(x)=g(x+z)$ for every pair $x\in\mathbb{R}^d$ and $z\in V_1^\perp$.
\end{lemma}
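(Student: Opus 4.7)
\noindent\textbf{Proof plan for Lemma \ref{csch}.} This is a Fisher-information projection inequality: integrating out the orthogonal complement of $V_1$ cannot increase the Fisher information along a direction lying inside $V_1$. The proof reduces to a single application of Cauchy--Schwarz along the fibers $V_1^\perp$, so only a few lines are required.

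The first step is to observe that because $u \in V_1$, the derivative $\partial_u$ commutes with integration over $V_1^\perp$. Hence for every $x \in V_1$
\[
\partial_u f_1(x) \;=\; \int_{V_1^\perp} \partial_u f(x+z)\, dz
\;=\; \int_{V_1^\perp} \frac{\partial_u f(x+z)}{\sqrt{f(x+z)}} \cdot \sqrt{f(x+z)} \, dz,
\]
where on the set $\{f=0\}$ we interpret $\partial_u f/\sqrt f$ as $0$ (this is justified since $\partial_u f$ vanishes a.e.\ on $\{f=0\}$ for the integrable densities of interest, a standard fact which one may also bypass by the usual smoothing argument using the positive density $f_{\sigma}=f*\mathrm{N}(0,\sigma I)$ and letting $\sigma\to 0$). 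Applying the Cauchy--Schwarz inequality on $V_1^\perp$ gives
\[
\bigl(\partial_u f_1(x)\bigr)^2 \;\leq\; \left(\int_{V_1^\perp} \frac{(\partial_u f(x+z))^2}{f(x+z)}\, dz\right) \cdot \left(\int_{V_1^\perp} f(x+z)\, dz\right),
\]
and the second factor on the right is exactly $f_1(x)$. Dividing by $f_1(x)$ and integrating over $V_1$ produces, by Fubini's theorem,
\[
\int_{V_1} \frac{(\partial_u f_1)^2}{f_1} \;\leq\; \int_{V_1}\!\int_{V_1^\perp} \frac{(\partial_u f(x+z))^2}{f(x+z)}\, dz\, dx \;=\; \int_{\mathbb{R}^d} \frac{(\partial_u f)^2}{f},
\]
which is the desired inequality.

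For the equality case, Cauchy--Schwarz is saturated exactly when the two functions are proportional on the fiber $x+V_1^\perp$, i.e.\ there exists a scalar $c(x)$ (depending only on $x\in V_1$) such that $\partial_u f(x+z)/\sqrt{f(x+z)} = c(x)\sqrt{f(x+z)}$ for a.e.\ $z \in V_1^\perp$. On the support of $f$ this rearranges to $g(x+z) = \partial_u\log f(x+z) = c(x)$, which is precisely the stated condition that $g$ is constant along cosets of $V_1^\perp$. I do not anticipate a conceptual obstacle; the only care needed is to justify differentiation under the integral sign and the treatment of the zero set of $f$, which in this paper is handled implicitly through the smoothness provided by the convolution with the Gaussian wave (cf.\ Chapter \ref{smooth}).
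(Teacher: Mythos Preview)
Your proof is correct and is essentially the same as the paper's. The paper writes the key step as the variance inequality $\int Y^2\,d\nu_x \ge \bigl(\int Y\,d\nu_x\bigr)^2$ applied to $Y(z)=\partial_u f(x+z)/f(x+z)$ against the conditional density $d\nu_x(z)=f(x+z)/f_1(x)\,dz$ on each fiber $x+V_1^\perp$, but unwinding this gives exactly your Cauchy--Schwarz factorization $\partial_u f = (\partial_u f/\sqrt f)\cdot\sqrt f$, and the equality analysis is identical. (Your worry about the zero set of $f$ is moot in context: by the reduction in Chapter~\ref{smooth} the density $f$ arises from a process that has been convolved with a Gaussian and is therefore smooth and strictly positive on $\mathbb{R}^d$.)
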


\begin{proof} $$\int_{\mathbb{R}^d} (\partial_uf)^2/f=\int_{x\in V_1}f_1(x)\int_{z\in V_1^\perp} (f(x+z)/f_1(x))(\partial_u f(x+z)/f(x+z))^2$$
$$\geq \int_{x\in V_1}f_1(x)\Bigl(\int_{z\in V_1^\perp} (f(x+z)/f_1(x))(\partial_u f(x+z)/f(x+z))\Bigr)^2$$
$$=\int_{x\in V_1}f_1(x)\Bigl(\int_{z\in V_1^\perp} (\partial_u f(x+z)/f_1(x))\Bigr)^2=\int_{V_1} (\partial_uf_1)^2/f_1.$$ To see the inequality in the above calculation notice that $f(x+z)/f_1(x)$ is the density function of a probability measure on $x+V_1$ . We can apply the Cauchy--Schwarz inequality using this density function to get the inequality.
It also shows that equality holds in the statement of the lemma if and only if $\partial_u f(x+z)/f(x+z)$ is constant almost everywhere on $x+V_1$ for almost every $x$. Since we work with continuous functions the almost can be omitted. \hfill $\square$
\end{proof}

\medskip

We apply Lemma \ref{csch} for $a_1$ and $b_1$ and (\ref{eq:aa11}) to obtain that 
\begin{equation}\label{eq:ab1}\Lambda_F'(0)\geq (dt_1-d/2)\int_{V_1} (\partial_{a_1}f_1)^2/f_1+(dt_2-d/2)\int_{V_1} (\partial_{b_1}f_1)^2/f_1.\end{equation}
Using the symmetry of $(X_1,Z)$ we obtain that 
\begin{equation}\label{eq:ab2}\int_{V_1} (\partial_{a_1}f_1)^2/f_1=\int_{V_1} (\partial_{b_1}f_1)^2/f_1.\end{equation}
It follows that $\Lambda_F'(0)\geq 0$. The proof of the first part of Proposition \ref{gaussprop} is now complete.

We arrived to the second part of Proposition \ref{gaussprop}. Assume that $F$ satisfies $\Lambda_F'(t)=0$ for every $t\geq 0$. Using the notation from Lemma \ref{wr} we have by $t_1+t_2=1$ that at least one of $t_1>0$ and $t_2>0$ holds. Without loss of generality we assume that $t_1>0$. Let $g_t:\mathbb{R}^d\to\mathbb{R}$ denote the logarithm of the density function of $B_t=T(F_t)$. 
This implies by Lemma \ref{csch} that $\partial_{a_1}g_t$ satisfies the property that $\partial_{a_1}g_t(x)=\partial_{a_1}g_t(x+z)$ holds whenever $x\in \mathbb R^d$ and $z\in V_1^{\perp}$. For convenience, we will write every element $x\in \mathbb R^d$ as a triple $(\alpha(x), \beta(x), \gamma(x))$, where $\alpha(x)=(x, a_1)$, $\beta(x)=(x, b_1)$ and $\gamma(x)$ is the projection of $x$ to $V_1^{\perp}$. Using this notation, we have that $\partial_{a_1}g_t(x)=h_t(\alpha(x), \beta(x))$. This means that there exists a function $\widehat{h_t}: \mathbb R^2\rightarrow \mathbb R$ such that $\partial_{a_1} \widehat{ h_t}(\alpha(x), \beta(x))=\partial_{a_1}g_t(x)$. We have by $\partial_{a_1}(g_t-\widehat{h_t})=0$ that $g_t(x)-\widehat{h_t}(\alpha(x), \beta(x))=s_t(\beta(x), \gamma(x))$ for some function $s_t$. We obtain that the density function of $B_t$ can be written in the following form.
\[\exp(\widehat{h_t}(\alpha(x), \beta(x)))\cdot \exp(s_t(\beta(x), \gamma(x)).\]
In other words, this means that the random variables of $\alpha(B_t)$ and $\gamma(B_t)$ are conditionally independent with respect to $\beta(B_t)$. This implies by Lemma \ref{twoposs} that one of the following two possibilities holds: either $\alpha(B)$ is independent of $(\beta(B), \gamma(B))$ or $\gamma(B)$ is independent of $(\alpha(B), \beta(B))$. In the first case we obtain (using the terminology of Lemma \ref{indepgauss}) that $a_1$ is an independent direction for $B$. By symmetries of $B$ we obtain that $\{a_i\}_{i=1}^d$ are all independent directions for $B$. If $t_1<1$ then $(a_i,a_j)\neq 0$ for every pair $1\leq i<j\leq d$ and Lemma \ref{indepgauss} finishes the proof. If $t_1=1$, then $\{a_i\}_{i=1}^d$ is an orthonormal basis in $\mathbb{R}^d$ and $b_1=\sum_{i=2}^{d-1}a_i(d-1)^{-1/2}$. We have that $(B,a_i)$ are identically distributed independent random variables and that $(B,b_1)=\sum_{i=2}^{d-1}(B,a_i)(d-1)^{-1/2}$ has the same distribution. This is only possible if this $(B,a_i)$ is normal for every $i$.
We obtain that $B$ is Gaussian.

In the case when $\gamma(B)$ is independent of $(\alpha(B), \beta(B))$ we have that $\partial_u g(x)=\partial_ug(x+z)$ holds for every triple $u\in V_1, x\in\mathbb{R}^d,z\in V_1^\perp$. The symmetries of $f$ imply that $\partial_u g(x)=\partial_ug(x+z)$ holds for every triple $u\in V_i, x\in\mathbb{R}^d,z\in V_i^\perp$.
Let $r_i$ denote the orthogonal projection of $a_i$ to $w^\perp$ for $1\leq i\leq d$. Note that the vector system $\{r_i\}_{i=1}^d$ is completely symmetric in the sense that the origin is the center of a regular simplex whose vertices are given by these vectors. 
We have for every $1\leq i\leq d$ that $\partial_{r_i} g(x)=h_i((x,r_i),(x,w))$ for some two variable function $h_i:\mathbb{R}^2\to\mathbb{R}$. The symmetries of $f$ imply that $h_i$ does not depend on $i$ and thus $h_i=h$ for some $h$ for every $i$. 

The next step is to prove that $h(x,y)=xh^*(y)$ for some one variable function $h^*$.
We have by $\sum_{i=1}^d r_i=0$ that $\sum_{i=1}^d\partial_{r_i}g(x)=0$ and thus $\sum_{i=1}^d h_i((x,r_i),(x,w))=0$ holds for every $x\in\mathbb{R}^d$. For arbitrary numbers $x_1,x_2,\dots,x_d,y\in\mathbb{R}$ with $\sum_{i=1}^d x_i=0$ we can choose $x\in\mathbb{R}^d$ such that $(x,r_i)=x_i$ and $(x,w)=y$. It follows that $\sum_{i=1}^d h(x_i,y)=0$ holds for arbitrary numbers with $\sum_{i=1}^d x_i=0$. Assume first that all $x_i$ is $0$ then we have that $dh(0,w)=0$ and thus $h(0,w)=0$ for every $w$. Then assume that $x_1=a,x_2=-a$ and $x_i=0$ if $i\geq 3$. We obtain that $h(a,w)+h(-a,w)=0$ and thus $h(-a,w)=-h(a,w)$ holds for every $a$ and $w$. Finally let $x_1=a,x_2=b,x_3=-a-b$ and $x_i=0$ if $i\geq 4$. We obtain that $h(a,w)+h(b,w)=-h(-a-b,w)=h(a+b,w)$.
Since $h$ is additive and continuous in the first coordinate a well known fact implies that $h$ is a linear function in the first coordinate and thus we obtain $h(x,y)=xh^*(y)$. 

Now we have $\partial_{r_i} g(x)=(x,r_i)h^*((x,w))$ for every $1\leq i\leq d$. It is easy to see that this implies that $g(x)=\|x-(x,w)w\|_2^2h^*((x,w))+c^*((x,w))$ where $c^*:\mathbb{R}\to\mathbb{R}$ is some function.
We have that $\partial_w g(x)=\|x-w(x,w)\|_2^2(h^*)'((x,w))+(c^*)'((x,w))$.
On the other hand we have that $\partial_w g(x)=\partial_w g(x+z)$ holds whenever $z\in V_i^\perp$ for every $1\leq i\leq d$. It follows that $(h^*)'=0$ everywhere and so $g(x)=c\|x-(x,w)w\|_2^2+c^*((x,w))$ with some constant $c$. Now from $f(x)=\exp(c\|x-(x,w)w\|_2^2+c^*((x,w)))$ we have that $B-(B,w)w$ and $(B,w)$ are independent random variables. Furthermore $B-(B,w)w$ is a Gaussian distribution concentrated on the orthogonal space of $w$. This means that the pair $(B,r_1),(B,w)$ of random variables is independent and $(B,r_1)$ is Gaussian. We know that $(B,v_1)$ is a linear combination of $(B,r_1)$ and $(B,w)$ (with a non-zero coefficient for $(B,r_1)$) and its distribution is the same as the distribution of $(B,w)$ (here we use the symmetries of $B$). It follows that $(B,w)$ is also Gaussian and thus $c^*((x,w))=c_2(x,w)^2+c_3$ for some constants $c_2,c_3$. Thus we have that $B$ is a Gaussian joint distribution implying that $F$ is also joint Gaussian, as it is a linear function of $B$. \hfill $\square$

\medskip

\noindent{\bf Proof of Theorem \ref{main} and Theorem \ref{mainlim}}~~From Proposition \ref{lem:equiv} we have that theorem \ref{mainlim} implies theorem \ref{main}. Let $\mu$ be a smooth typical eigenvector process corresponding to eigenvalue $\lambda$ represented by a system of random variables $\{X_v\}_{v\in V_d}$. According to the results in chapter \ref{smooth} it is enough to show that $\mu$ is a Gaussian wave. We have by Lemma \ref{lem:friedman} that $\lambda\in [-2\sqrt{d-1},2\sqrt{d-1}]$. Let $F=\{X_v\}_{v\in C}$. It is clear that $F\in\mathcal{F}_{d,\lambda}$. We have by Theorem \ref{thm:imprmain} that $\mathcal{D}(F)\geq\mathcal{D}(F^*)$ and thus by Theorem \ref{gausscond} we obtain that $F=F^*$. Again by Theorem \ref{thm:imprmain} we have that $\mu$ is $2$-Markov and so $\{X_v\}_{v\in V_d}$ can be obtained by iterating conditionally independent couplings of $C$ along edges (see chapter \ref{chap:impr}). This shows the Gaussianity of the whole system $\{X_v\}_{v\in V}$.

\section{Appendix A: On heated random variables}\label{heatedrandom}

Let $X$ be a random variable with values in $\mathbb{R}^n$ and let $M$ be the standard normal distribution on $\mathbb{R}^n$. Let $f_t$ denote the density function of $X+\sqrt{2t}M$ and let $\mu_t$ denote the corresponding measure on $\mathbb{R}^n$. The standard heat equation says that $\partial_tf_t=\triangle f_t$ holds for every $t>0$. It is useful to compute the variation of the differential entropy $\mathbb D(f_t)$. The de Bruijn identity (see e.g.\ \cite{cover}) says  that 
\begin{equation}\label{diffheat}
\partial_t(\mathbb D(f_t))=\partial_t \int_{\mathbb{R}^n} -f_t\log f_t=\int_{\mathbb{R}^n} -\triangle f_t (1+\log f_t)=\int_{\mathbb{R}^n} \|\triangledown f_t\|_2^2/f_t.
\end{equation}

%\int_{\mathbb{R}^n} (\|\triangledown f_t\|_2/f_t)^2 d\mu_t

However the validity of (\ref{diffheat}) relies on the fact that both $\partial_i f_t$ and $\partial_i f_t\log f_t$ vanish at infinity for every $1\leq i\leq n$. This fact is proved in Lemma \ref{korlat}.
Notice that if $X$ has finite variance then also $X+\sqrt{2t}M$ has finite variance and if $t>0$ then $\mathbb D(f_t)$ is a finite quantity (Lemma \ref{lem:veges}).

In general if  $\sigma>0$ then the density function $f$ of $X+\sigma M$ is smooth, non-vanishing  and analytic restricted to every line in $\mathbb{R}^n$. More precisely, if $p,q\in\mathbb{R}^n$ then the real function $\lambda\mapsto f(p+\lambda(q-p))$ extends to an entire analytic function on $\mathbb{C}$. Furthermore every partial derivative of $f$ has this property. In the rest of this appendix we prove several other facts about heated random variables.

\begin{lemma}\label{appineq}  Let $X$ be a random variable with values in $\mathbb{R}^n$ and let $M$ be the standard normal distribution on $\mathbb{R}^n$. Let $f$ be the density function of the independent sum $X+\sigma M$ for some $\sigma>0$. Then for every $1\leq i\leq n$ and $x\in\mathbb{R}^n$ we have that $|\partial_i f(x)|\leq f(x)a(1+|\log(bf(x))|^{1/2})$ for some constants $a,b$ depending on $n$ and $\sigma$.
\end{lemma}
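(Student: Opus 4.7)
The plan is to bound $|\partial_i f|$ by a convolution integral and split that integral at a cutoff radius $R$ depending on the value of $f(x)$.

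First, write $f = \mu_X \ast g_\sigma$ where $g_\sigma$ is the Gaussian density with covariance $\sigma^2 I$. Differentiation under the integral gives
\[
\partial_i f(x) \;=\; -\int \frac{x_i - y_i}{\sigma^2}\, g_\sigma(x-y)\, d\mu_X(y),
\]
so $|\partial_i f(x)| \le \sigma^{-2}\int \|y-x\|\, g_\sigma(x-y)\, d\mu_X(y)$. For any $R>0$ I will split the integrand at $\{\|y-x\|\le R\}$ and its complement. On the near set, trivially bound $\|y-x\| \le R$, producing the term $\sigma^{-2} R f(x)$. On the far set, the goal is a Gaussian-tail bound of the form $C e^{-R^2/(8\sigma^2)}$ independent of $x$.

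For the far piece, the essential trick is the factorization $t e^{-t^2/(2\sigma^2)} = \bigl(t e^{-t^2/(4\sigma^2)}\bigr)\cdot e^{-t^2/(4\sigma^2)}$, which yields $t e^{-t^2/(2\sigma^2)} \le \sigma\sqrt{2/e}\cdot e^{-t^2/(4\sigma^2)}$ by maximizing the first factor. Applying this with $t=\|y-x\|$ and then splitting $e^{-\|y-x\|^2/(4\sigma^2)} \le e^{-R^2/(8\sigma^2)} e^{-\|y-x\|^2/(8\sigma^2)}$ on $\{\|y-x\|>R\}$, the residual Gaussian $(2\pi\sigma^2)^{-n/2} e^{-\|y-x\|^2/(8\sigma^2)}$ is a constant multiple of the density of the heated variable $X + 2\sigma M$, which is itself bounded by the universal constant $(8\pi\sigma^2)^{-n/2}$. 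Thus the far integral is at most $C_2 e^{-R^2/(8\sigma^2)}$ for a constant $C_2 = C_2(n,\sigma)$.

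Combining, $|\partial_i f(x)| \le \sigma^{-2} R f(x) + C_2 e^{-R^2/(8\sigma^2)}$. Now I choose $b := (2\pi\sigma^2)^{n/2}$, which forces $bf(x)\le 1$ (using $f\le \|g_\sigma\|_\infty$), so that $|\log(bf(x))| = \log(1/(bf(x))) \ge 0$. Setting
\[
R := 2\sqrt{2}\,\sigma\, |\log(bf(x))|^{1/2}
\]
makes $e^{-R^2/(8\sigma^2)} = b f(x)$, so the two terms become $\tfrac{2\sqrt 2}{\sigma} f(x)\, |\log(bf(x))|^{1/2}$ and $C_2 b\, f(x)$ respectively. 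Adding them gives the stated bound with $a = \max(2\sqrt{2}/\sigma,\, C_2 b)$ (after a trivial enlargement to absorb the $1$). The main technical obstacle is the factorization step that exchanges the polynomial factor $\|y-x\|$ for a Gaussian decay at a slightly larger scale; once that is in hand, the optimization of $R$ is mechanical and the constants are tracked as above.
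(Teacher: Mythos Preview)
Your proof is correct and follows essentially the same strategy as the paper: write $\partial_i f$ as a convolution of $\mu_X$ with $\partial_i g_\sigma$, split the integral at a radius comparable to $\sigma\sqrt{|\log(\mathrm{const}\cdot f(x))|}$, bound the near part by $R f(x)/\sigma^2$ using $|\partial_i g_\sigma/g_\sigma|\le \|y-x\|/\sigma^2$, and use the Gaussian tail to control the far part by a multiple of $f(x)$. The only cosmetic difference is that the paper defines the cutoff radius $r$ implicitly (the smallest $r$ for which $|\partial_i\Phi(z)|\le f(x)$ whenever $|z|\ge r$, making the far integral $\le f(x)$ immediately), whereas you bound the far integral explicitly by $C_2 e^{-R^2/(8\sigma^2)}$ and then choose $R$ to match this to $bf(x)$; these are equivalent bookkeeping choices.
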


\begin{proof} Let $\Phi$ be the density function of $\sigma M$ and let $\mu$ be the distribution of $X$. Let $r\in\mathbb{R}^+$ the smallest positive real number such that $\partial_i\Phi(z)\leq |f(x)|$ for every $z$ satisfying $|z|\geq r$.
It can be shown that $r\leq c_2(1+|\log (c_1f(x))|^{1/2})$ for some constants (depending on $n$ and $\sigma$).
Let $D=\{z : |x-z|\leq r\}$. We have that $\int_{z\in \overline D} \partial_i\Phi(x-z) d\mu\leq |f(x)|$. On the other hand, by $f(x)=\int_{z\in \mathbb R} \Phi(x-z) d\mu$ we obtain that $$\int_{z\in D}\partial_i\Phi(x-z) d\mu\leq |f(x)|\max_{|y|\leq r}\partial_i\Phi(y)/\Phi(y).$$ Using that $\partial_if(x)=\int_z \partial_i\Phi(x-z)d\mu$ and that $\partial_i\Phi(z)/\Phi(z)=O(z)$ the proof is complete.
\hfill $\square$
\end{proof} 

\begin{lemma}\label{korlat} Let $X$ be a random variable with values in $\mathbb{R}^n$ and let $M$ be a random variable with  standard normal distribution on $\mathbb{R}^n$. Let $f$ be the density function of the independent sum $X+\sigma M$ for some $\sigma>0$. Then for $1\leq i\leq n$ the functions $f$, $\partial_i f$ and $\partial_i f\log f$  vanish at infinity. 
\end{lemma}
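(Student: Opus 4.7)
My plan is to reduce all three statements to the single fact that $f$ vanishes at infinity, using Lemma \ref{appineq} to control the derivatives in terms of $f$ itself.

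First I would show that $f(x)\to 0$ as $|x|\to\infty$. Write $f(x)=\int_{\mathbb{R}^n}\Phi_\sigma(x-z)\,d\mu(z)$, where $\Phi_\sigma$ is the density of $\sigma M$ and $\mu$ is the law of $X$. For any fixed $R>0$, split the integral according to whether $|z|\le R$ or $|z|>R$. The first piece is bounded by $\sup_{|z|\le R}\Phi_\sigma(x-z)$, which tends to $0$ as $|x|\to\infty$ because $\Phi_\sigma$ is continuous and vanishes at infinity. The second piece is bounded by $\mu(\{|z|>R\})\cdot\|\Phi_\sigma\|_\infty$. Hence $\limsup_{|x|\to\infty}f(x)\le\mu(\{|z|>R\})\|\Phi_\sigma\|_\infty$, and letting $R\to\infty$ gives $f(x)\to 0$. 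Note this uses nothing about moments of $X$; only that $\mu$ is a probability measure.

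Next, by Lemma \ref{appineq} we have the pointwise bound
\[
|\partial_i f(x)|\le a\,f(x)\bigl(1+|\log(bf(x))|^{1/2}\bigr)
\]
for constants $a,b>0$ depending only on $n$ and $\sigma$. Since the real-variable function $t\mapsto t(1+|\log(bt)|^{1/2})$ tends to $0$ as $t\to 0^+$, the vanishing of $f$ at infinity immediately forces $\partial_i f(x)\to 0$.

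Finally, to handle $\partial_i f\cdot \log f$, combine the previous bound with the factor $|\log f(x)|$:
\[
|\partial_i f(x)\log f(x)|\le a\,f(x)\bigl(1+|\log(bf(x))|^{1/2}\bigr)|\log f(x)|.
\]
Again, $t|\log t|(1+|\log(bt)|^{1/2})\to 0$ as $t\to 0^+$, so $\partial_i f\log f$ vanishes at infinity as well. No step here seems delicate; the only mild care is checking that the constant $b$ does not create an issue when $f(x)$ is comparable to $1/b$, which is handled by uniform continuity of $t\mapsto t|\log t|^{3/2}$ on compact subsets of $[0,\infty)$. The main conceptual input is Lemma \ref{appineq}, which has already been established.
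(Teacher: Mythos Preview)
Your proof is correct and follows the same overall strategy as the paper: first show that $f$ vanishes at infinity, then deduce the vanishing of $\partial_i f$ and $\partial_i f\log f$ directly from the pointwise bound of Lemma~\ref{appineq}.

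The only difference is in the first step. The paper argues by contradiction: if $\{x:f(x)\ge c\}$ were unbounded, then for each such $x$ the convolution formula forces $\mu(B_r(x))\gtrsim c$ for a fixed $r$, and one could extract infinitely many pairwise disjoint such balls, contradicting $\mu(\mathbb{R}^n)=1$. You instead give a direct $\varepsilon$-argument, splitting the convolution integral at radius $R$ and using tightness of $\mu$ to make the tail small. Both arguments are standard and of comparable length; yours is perhaps slightly more transparent. Your parenthetical remark about $f(x)$ being comparable to $1/b$ is unnecessary, since only the behavior as $f(x)\to 0$ is at issue and the functions $t\mapsto t(1+|\log(bt)|^{1/2})$ and $t\mapsto t|\log t|(1+|\log(bt)|^{1/2})$ are continuous on $(0,\infty)$ anyway.
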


\begin{proof} We start with $f$. For contradiction, assume that $D=\{x : f(z)\geq c\}$ is unbounded for some $c>0$. Let $\Phi$ be the density function of $\sigma M$ and let us choose $r\in\mathbb{R}^+$ such that $\Phi(x)\leq c/2$ whenever $\|x\|_2\geq r$. Let $\mu$ be the probability distribution of $X$. We have that if $f(x)\geq c$ and $Q_x=\{z: \|z-x\|_2< r\}$, then $\int_{z\in Q_x}\Phi(z-x) d\mu\geq c/2$ and thus $\mu(Q_x)\geq \|\Phi\|_\infty^{-1}c/2$. From the unboundedness of $D$ we conclude that there is an infinite set of points $\{p_i\}_{i=1}^\infty$ in $D$ such that $\|p_i-p_j\|_2>2r$ holds for every pair $i\neq j$ in $\mathbb{N}$. This contradicts the fact that $\mu$ is finite. The statement for $\partial_i f$ and $\partial_i f\log f$ follows from Lemma \ref{appineq}.\hfill $\square$

\end{proof} 

\begin{lemma}\label{lem:veges} Let $X$ be a random variable with values in $\mathbb R^n$ with finite covariance matrix. Let $M$ be independent of $X$, with standard normal distribution on $\mathbb R^n$. Then, for every  $\sigma>0$,  $X+\sigma M$ has finite differential entropy.
\end{lemma}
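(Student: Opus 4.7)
\smallskip

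The plan is to derive a finite upper bound and a finite lower bound on the differential entropy $\mathbb{D}(Y)$ where $Y = X+\sigma M$, exploiting two facts: the density of $Y$ is bounded above by a constant (from the Gaussian smoothing), and $Y$ has finite covariance matrix (as a sum of two independent random variables with finite covariance).

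First I would establish that $Y$ has a density $f$ with $f(y) \leq C := (2\pi\sigma^2)^{-n/2}$ for every $y\in\mathbb R^n$. This is immediate from $f(y) = \int \phi_\sigma(y-x)\, d\mu_X(x)$, where $\phi_\sigma$ is the density of $\sigma M$, which is uniformly bounded by $C$. Consequently $-\log f(y) \geq -\log C$ pointwise, so integrating against $f$ yields the lower bound $\mathbb{D}(Y) = \int(-f\log f) \geq -\log C$, which is a finite real number.

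Next I would deal with the upper bound via the maximum entropy property of the Gaussian. Let $\Sigma$ denote the covariance matrix of $Y$. Since $X$ and $M$ are independent and both have finite covariance, $\Sigma = \Sigma_X + \sigma^2 I$ is finite and positive definite. Let $g$ be the Gaussian density on $\mathbb R^n$ with the same mean and covariance as $Y$. Since $\log g(y)$ is an affine function of $(y-\mu)(y-\mu)^T$, the integral $-\int f \log g$ evaluates to $\tfrac12\log((2\pi e)^n\det\Sigma)$, a finite quantity, using only that $Y$ has the prescribed finite second moments. Before invoking the non-negativity of the Kullback--Leibler divergence $\int f \log(f/g)\geq 0$, I would first verify that $\int f\log f$ is well-defined as an element of $[-\infty,+\infty]$: on $\{f>1\}$ the Lebesgue measure is at most $1$ (since $f$ integrates to $1$) and $0\leq f\log f\leq C\log^+ C$ there, so the positive part of $f\log f$ is integrable. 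This legitimizes the KL inequality, which rearranges to $\mathbb{D}(Y)\leq -\int f\log g = \tfrac12\log((2\pi e)^n\det\Sigma) < \infty$.

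Combining the two bounds yields $-\log C \leq \mathbb{D}(Y) \leq \tfrac12\log((2\pi e)^n\det\Sigma)$, so $\mathbb{D}(Y)$ is finite. The only subtle point is the measure-theoretic well-definedness of $\int f\log f$; this is handled as above by the uniform bound $f\leq C$, which bounds the positive part of $f\log f$ on the set where $f>1$. Everything else is a direct application of standard facts (boundedness of Gaussian-convolved densities and the maximum entropy characterization of the Gaussian).
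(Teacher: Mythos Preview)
Your proof is correct and follows essentially the same two-step strategy as the paper: the upper bound comes from the maximum-entropy property of the Gaussian among distributions with a fixed covariance, and the lower bound comes from controlling the set where $f>1$. The only minor difference is that for the lower bound the paper invokes Lemma~\ref{korlat} to argue that $\{f>1\}$ is compact and then uses continuity of $f$, whereas you use the more direct pointwise bound $f\le (2\pi\sigma^2)^{-n/2}$ coming from the Gaussian convolution; your route is slightly more elementary and also makes the well-definedness of $\int f\log f$ explicit, which the paper leaves implicit.
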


\begin{proof} The random variable $X+\sigma M$ has finite covariance matrix. As it is well-known, among the distributions with a given covariance matrix, Gaussian distribution maximizes differential entropy. Hence $\mathbb D(X+\sigma M)<\infty$. On the other hand, as in the previous lemma, let $f$ be the density function of $X+\sigma M$. Lemma \ref{korlat} implies that $\{t: f(t)>1\}$ is a compact set. The continuity of $f$ implies that  $\int_{\mathbb R^n} f(t) \log f (t) dt<\infty$. Thus we also have $\mathbb D(X+\sigma M)>-\infty$.
\end{proof}\hfill $\square$

\begin{lemma}\label{arany} Let $X$ be a random variable with values in $\mathbb{R}^n$  and let $M$ be the standard normal distribution on $\mathbb{R}^n$. Let $f$ be the density function of the independent sum $X+\sigma M$ for some $\sigma>0$. Then for every $\varepsilon>0$ there is $\varepsilon'>0$ such that for every pair $a,b\in\mathbb{R}^n$ with $\|a-b\|_2=r\leq\varepsilon'$ and $f(b)>c$ we have that $f(a)/f(b)\geq 1-\varepsilon$, where $c= \sigma^{-(n-1)/2}\exp(-r^{-1}/(16\sigma^2)).$
\end{lemma}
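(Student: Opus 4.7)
The plan is to exploit the convolution structure $f = \mu \ast \Phi$, where $\mu$ is the law of $X$ and $\Phi(y) = (2\pi\sigma^2)^{-n/2}\exp(-\|y\|_2^2/(2\sigma^2))$ is the Gaussian density. For any $z \in \mathbb{R}^n$, the pointwise ratio of integrands can be computed exactly:
$$\Phi(a-z)/\Phi(b-z) = \exp\bigl((\|b-z\|_2^2 - \|a-z\|_2^2)/(2\sigma^2)\bigr) \geq \exp\bigl(-(2r\|b-z\|_2 + r^2)/(2\sigma^2)\bigr),$$
where the inequality uses $\|a-b\|_2 = r$, the expansion $\|b-z\|_2^2 - \|a-z\|_2^2 = (b-a)\cdot((b-z)+(a-z))$, and Cauchy--Schwarz with $\|a-z\|_2 \leq \|b-z\|_2 + r$.

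Next I would fix a radius $R$ (to be chosen as a function of $r$) and split $f(b)$ into a near part and a tail $T := \int_{\|b-z\|_2 > R}\Phi(b-z)\,d\mu(z)$. On the near set the pointwise ratio above is uniformly at least $\exp(-(2rR+r^2)/(2\sigma^2))$, which gives
$$f(a) \;\geq\; e^{-(2rR+r^2)/(2\sigma^2)}\bigl(f(b) - T\bigr), \qquad T \;\leq\; (2\pi\sigma^2)^{-n/2}\,e^{-R^2/(2\sigma^2)},$$
the bound on $T$ coming from the uniform estimate $\Phi(y) \leq (2\pi\sigma^2)^{-n/2}e^{-R^2/(2\sigma^2)}$ for $\|y\|_2 > R$. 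The choice $R = 1/(2\sqrt{r})$ balances the two error sources: it makes $2rR = \sqrt{r} \to 0$, so the multiplicative prefactor tends to $1$; and simultaneously $R^2/(2\sigma^2) = 1/(8r\sigma^2)$, so the tail $T$ is super-exponentially small in $r$.

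Now plug in the hypothesis $f(b) > c = \sigma^{-(n-1)/2}\exp(-1/(16r\sigma^2))$. A direct comparison of exponents yields
$$T/f(b) \;\leq\; (2\pi)^{-n/2}\,\sigma^{-(n+1)/2}\,\exp\bigl(-1/(8r\sigma^2) + 1/(16r\sigma^2)\bigr) \;=\; (2\pi)^{-n/2}\,\sigma^{-(n+1)/2}\,\exp\bigl(-1/(16r\sigma^2)\bigr),$$
which tends to $0$ as $r \to 0$. Both error sources thus vanish, and choosing $\varepsilon'$ small enough that the prefactor is at least $1 - \varepsilon/2$ and $T/f(b) \leq \varepsilon/2$ yields $f(a)/f(b) \geq (1-\varepsilon/2)^2 \geq 1 - \varepsilon$.

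I do not expect any serious obstacle: this is a standard convolution-smoothing estimate. The only delicate point is the bookkeeping of constants. The scaling $R \sim 1/\sqrt{r}$ is forced, because the ratio bound contributes an error linear in $R$ while the Gaussian tail contributes one quadratic in $R$; and the specific constant $1/16$ appearing in the definition of $c$ is exactly what is needed so that the exponent $-R^2/(2\sigma^2) + 1/(16r\sigma^2)$ remains strictly negative with a definite gap, ensuring $T/f(b) \to 0$.
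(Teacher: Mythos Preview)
Your proof is correct and follows essentially the same approach as the paper: both split the convolution integral into a near part (radius $\sim r^{-1/2}$) where the pointwise Gaussian ratio is uniformly close to $1$, and a far part whose contribution is bounded by the Gaussian tail $\exp(-\Theta(r^{-1})/\sigma^2)$, which is negligible against the hypothesis $f(b)>c$. Your version is marginally cleaner in that you center the ball at $b$ and only need a single tail bound (on $T$), whereas the paper centers at $a$, splits both $f(a)$ and $f(b)$, and needs the slightly stronger estimate $|f_2|<c^2$; but the underlying mechanism is identical.
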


\begin{proof} We start by general estimates for a pair $a,b\in\mathbb{R}^n$ with $r=\|a-b\|_2\leq 1/4$. We have that $f(x)=\int_{y\in\mathbb{R}^n}\Phi(x-y) d\mu$ where $\mu$ is the probability distribution of $X$ and $\Phi$ is the density function of $\sigma M$. Let $D=\{z : \|z-a\|_2\leq r^{-1/2}\}$. Let $f_1(x)=\int_{y\in\mathbb{R}^n}1_D\Phi(x-y) d\mu$ and $f_2(x)=f(x)-f_1(x)$. We have that $f_2(x)\leq\sup_{z\in\overline{D}}\Phi(x-z)$. It follows that $|f_2(a)|,|f_2(b)|\leq (2\pi \sigma^2)^{-(n-1)/2}\Phi_0(r^{-1/2}-r)$ where $\Phi_0$ is the density function of the one dimensional normal distribution $N(0,\sigma)$. Thus using $r^{-1/2}-r\geq r^{-1/2}/2$ and $1/\sqrt{2\pi}<1$ we have 
\begin{equation}\label{eq:f2}|f_2(a)|,|f_2(b)|\leq(2\pi \sigma^2)^{-(n-1)/2}\Phi_0(r^{-1/2}/2)<c^2.\end{equation}
To estimate $f_1(a)/f_1(b)$ we use 
$$\min_{z\in D}\Phi(z-a)/\Phi(z-b)\leq f_1(a)/f_1(b).$$ 
From $$\Phi(z-a)/\Phi(z-b)=\exp((\|z-b\|_2^2/2-\|z-a\|_2^2/2)/\sigma^2)=\exp(((a-z,b-a)+r^2/2)/\sigma^2)$$
it follows that
\begin{equation}\label{eq:fab}f_1(a)/f_1(b) \geq\exp((-r^{1/2}+r^2/2)/\sigma^2)\geq 1-r^{1/2}/\sigma^2 .\end{equation}
Inequality \eqref{eq:f2}  implies that $f(b)=f_1(b)+f_2(b)\leq f_1(b)+c^2$. Using this and $f_1(a)\leq f(a)$ we obtain  $$f(a)/f(b)\geq f_1(a)/(f_1(b)+c^2)=(f_1(a)/f_1(b))(1+c^2/f_1(b))^{-1}.$$ If $f(b)>c$, then $f_1(b)=f(b)-f_2(b)>c-c^2$ and thus by \eqref{eq:fab} we get  $$f(a)/f(b)>(f_1(a)/f_1(b))(1+c^2/(c-c^2))^{-1}\geq (1-r^{1/2})(1-c).$$
The quantity $c$ goes to $0$ with $r$ and so if $r$ is small enough we have that $f(a)/f(b)\geq 1-\varepsilon$.
\end{proof} \hfill $\square$

\medskip

Let $X$ be a random variable with values in $\mathbb{R}^d$. We say that $v\in\mathbb{R}^d$ is an independent direction for $X$ if the $\mathbb{R}$-valued random variable $(X,v)$ is independent from the projection of $X$ to the $d-1$ dimensional space $v^{\perp}$. Note that every direction is independent for the standard normal distribution. 

\begin{lemma}\label{indepgauss} Let $X$ be a random variable with values in $\mathbb{R}^d$ and with $\mathbb{E}(X)=0$. Assume that $\{v_i\}_{i=1}^d$ is a basis in $\mathbb{R}^d$ such that each $v_i$ is an independent direction for $X$ and furthermore for every $1\leq i\leq d$ there is $1\leq j_i\leq d$ such that $(v_i,v_{j_i})\neq 0$. Then $X$ is Gaussian.
\end{lemma}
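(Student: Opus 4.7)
I would attack this by characteristic functions. Set $\phi(t)=\mathbb E(e^{i(t,X)})$, fix a convex neighborhood $U$ of $0$ on which $\phi\ne 0$, and let $\psi:=\log\phi$; under the finite-variance setting in which this lemma is used, $\phi\in C^2$ and hence $\psi\in C^2(U)$. The hypothesis that $v_i$ is an independent direction translates, via the orthogonal splitting $\mathbb R^d=\langle v_i\rangle\oplus v_i^\perp$, into the factorization $\phi(t)=\phi(\pi_i t)\,\phi(t-\pi_i t)$, i.e.\ the additive identity
\[\psi(t)=\psi(\pi_i t)+\psi(t-\pi_i t)\qquad (t\in U),\]
where $\pi_i$ denotes orthogonal projection onto $v_i$. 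Since $\pi_i(t+sv_i)-\pi_i t = sv_i$, differentiating in the direction $v_i$ annihilates the second summand, giving $\partial_{v_i}\psi(t)=\partial_{v_i}\psi(\pi_i t)$; equivalently, $\partial_{v_i}\psi(t)=h_i((t,v_i))$ for some one-variable $C^1$ function $h_i$.

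For any pair $i,j$ with $(v_i,v_j)\ne 0$, Clairaut's theorem applied to $\partial_{v_j}\partial_{v_i}\psi=\partial_{v_i}\partial_{v_j}\psi$ yields $h_i'((t,v_i))(v_i,v_j)=h_j'((t,v_j))(v_i,v_j)$. Because $v_i$ and $v_j$ are linearly independent, the pair $((t,v_i),(t,v_j))$ ranges over an open neighborhood of the origin in $\mathbb R^2$ as $t$ varies through $U$, so both $h_i'$ and $h_j'$ must be the same constant. The hypothesis that every $i$ has a partner $j_i$ with $(v_i,v_{j_i})\ne 0$ therefore makes each $h_i$ affine, say $h_i(s)=c_is+d_i$, and hence $\partial_{v_i}\psi(t)=c_i(t,v_i)+d_i$ has constant gradient $c_iv_i$. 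Consequently the Hessian $H(t)$ of $\psi$ satisfies $H(t)v_i=c_iv_i$ for every $t\in U$ and every $i$; since $\{v_i\}_{i=1}^d$ is a basis, $H(t)$ is pinned down as a single constant symmetric matrix $H$. Integrating, and using $\psi(0)=0$ together with $\nabla\psi(0)=i\,\mathbb E(X)=0$, I obtain $\psi(t)=\tfrac12 t^THt$ on $U$.

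Finally, for any $t_0\in\mathbb R^d$ the one-dimensional characteristic function $s\mapsto\phi(st_0)$ equals $\exp\!\bigl(\tfrac{s^2}{2}t_0^THt_0\bigr)$ for all sufficiently small $s$, hence is $C^\infty$ at $s=0$. The standard fact that a scalar characteristic function which is $C^{2k}$ at $0$ has finite $2k$-th absolute moment then gives $(X,t_0)$ all moments, and these moments must coincide with the Gaussian ones extracted from the Taylor series of $\exp\!\bigl(\tfrac{s^2}{2}t_0^THt_0\bigr)$; since the Gaussian moment problem is determinate (Carleman), $(X,t_0)\sim N(0,-t_0^THt_0)$, and Cram\'er--Wold then upgrades the Gaussianity of every one-dimensional projection to joint Gaussianity of $X$. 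The main obstacle is precisely this local-to-global passage: a priori $\phi$ need not be analytic on $\mathbb R^d$, so one cannot directly analytically continue the identity $\psi=\tfrac12 t^THt$; routing the local Fourier information through moments plus Cram\'er--Wold is the key trick that converts the local quadratic shape of $\log\phi$ into Gaussianity of the whole vector $X$.
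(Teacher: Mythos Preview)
Your argument is correct and shares the paper's core trick: the logarithm of a suitable transform splits additively along each independent direction, so equating mixed directional derivatives $\partial_{v_j}\partial_{v_i}$ and $\partial_{v_i}\partial_{v_j}$ forces each one-variable piece to be affine and hence the logarithm to be quadratic. The difference lies in which transform is used and how one globalizes. The paper works with the \emph{density} of the heated variable $X+\varepsilon N$: convolution with a Gaussian yields a smooth, strictly positive density on all of $\mathbb R^d$, so $\log f$ is defined globally, the mixed-partial argument runs on the whole space, and one reads off directly that $\log f$ is a global quadratic polynomial, i.e.\ $X+\varepsilon N$ is Gaussian for every $\varepsilon>0$ and hence so is $X$. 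This sidesteps both your regularity assumption (no second moments are needed beyond $\mathbb E(X)=0$) and your local-to-global step entirely. Your characteristic-function route is more direct in that it introduces no auxiliary family of distributions, but it pays for this twice: you need finite variance to have $\psi\in C^2$ (as you flag), and you need the moment/Carleman/Cram\'er--Wold detour to promote the local identity $\psi=\tfrac12 t^THt$ near $0$ to Gaussianity of $X$. Both are clean; the paper's heating device trades a short limiting argument $\varepsilon\to0$ for a hypothesis-free, one-line endgame, while your approach stays closer to the Fourier side throughout.
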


\begin{proof} Let $N$ be the standard normal distribution. It is clear that the independent sum $X+\varepsilon N$ has the same independence property as $X$ for every $\varepsilon\geq 0$. Furthermore it is enough to prove that the heated version $X+\varepsilon N$ of $X$ is Gaussian for every $\varepsilon>0$. Let us fix $\varepsilon>0$. The advantage of working with $X+\varepsilon N$ is that it has a strictly positive smooth density function $f$ on $\mathbb{R}^d$ and so we can work with logarithms and with partial derivatives. The independence property now says that $\partial_{v_i} \log f (x)$ is equal to $h_i((x,v_i))$ for some smooth function $h_i:\mathbb{R}\to\mathbb{R}$. 
We obtain that
$$\partial_{v_{j_i}}\partial_{v_i}f=(v_i,v_{j_i})h_i''((x,v_i))~~,~~\partial_{v_i}\partial_{v_{j_i}}f=(v_i,v_{j_i})h_{j_i}''((x,v_{j_i})).
$$
and so $h_i''((x,v_i))=h_{j_i}''((x,v_{j_i}))$. Since $v_i$ and $v_{j_i}$ are independent for every pair $a,b\in\mathbb{R}$ there is $x\in\mathbb{R}^d$ such that $(x,v_i)=a$ and $(x,v_{j_i})=b$. This implies that $h_i''(a)=h_{j_i}''(b)$ holds for every $a,b$ and so each $h_i''$ is a constant function for every $i$.
Consequently $h_i$ is linear for every $i$ and thus (using that $\{v_i\}_{i=1}^d$ is a basis) $\partial_u \log f$ is a linear function for every $u\in\mathbb{R}^d$. It follows that $\log f$ satisfies $\partial_{u_1}\partial_{u_2}\partial_{u_3} \log f =0$ for every $u_1,u_2,u_3\in\mathbb{R}^d$. This means that $\log f$ is given by a quadratic $d$-variate polynomial $Q$ on $\mathbb{R}^d$. We obtain that $f=c\exp(Q(x))$ holds and thus $f$ is the density function of some Gaussian distribution. 

\end{proof}

\begin{lemma}\label{twoposs} Let $(X,Y,Z)$ be a joint distribution with $X,Y\in\mathbb{R}$ and $Z\in\mathbb{R}^{d-2}$. Let $(X_t,Y_t,Z_t)$ be the triple obtained by running the heat equation for time $t$ with $X_0=X,Y_0=Y$ and $Z_0=Z$.  Assume that for every $t\geq 0$ we have that $X_t$ and $Z_t$ are conditionally independent with respect to $Y_t$. Then either $(X,Y)$ is independent from $Z$ or $(Y,Z)$ is independent from $X$.
\end{lemma}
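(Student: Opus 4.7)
My plan is to work with the heated versions $(X_t,Y_t,Z_t)$ at times $t>0$, for which Appendix~A guarantees that the joint density $f_t$ is smooth, strictly positive, and real-analytic on $\mathbb{R}^d$. Setting $\phi_t:=\log f_t$, the hypothesis $X_t\perp Z_t\mid Y_t$ is equivalent, for each $t>0$, to a decomposition
\[
\phi_t(x,y,z)=V_t(x,y)+W_t(y,z)
\]
with real-analytic $V_t$ and $W_t$, or equivalently to the vanishing of the mixed partials $\partial_x\partial_{z_i}\phi_t\equiv 0$ for every coordinate $z_i$. Since the hypothesis holds on an open interval of $t$, we also have $\partial_t\partial_x\partial_{z_i}\phi_t\equiv 0$.

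The next step is to combine this with the heat equation $\partial_t f_t=\Delta f_t$, written in log form as $\partial_t\phi_t=\Delta\phi_t+\|\nabla\phi_t\|_2^2$. Apply $\partial_x\partial_{z_i}$ to both sides: the left-hand side vanishes, the Laplacian term vanishes as well, and in the expansion of $\partial_x\partial_{z_i}\|\nabla\phi_t\|_2^2$ every contribution is proportional either to some $\partial_x\partial_{z_j}\phi_t$ or to a derivative of such a quantity, all of which vanish by the hypothesis. The sole surviving term produces the rigid identity
\[
\partial_x\partial_y V_t(x,y)\cdot\partial_y\partial_{z_i}W_t(y,z)=0\qquad\bigl((x,y,z)\in\mathbb{R}^d,\ i=1,\dots,d-2,\ t>0\bigr),
\]
whose first factor depends only on $(x,y)$ and whose second factor depends only on $(y,z)$.

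At this point I would invoke a dichotomy. Fix $t>0$ and set
\[
A_t=\{y\in\mathbb{R}:\partial_x\partial_y V_t(\,\cdot\,,y)\equiv 0\},\qquad B_t=\{y\in\mathbb{R}:\partial_y\partial_{z_i}W_t(y,\,\cdot\,)\equiv 0\text{ for all }i\}.
\]
The rigid identity forces $A_t\cup B_t=\mathbb{R}$; both sets are closed, so by the Baire category theorem at least one contains a non-empty interval. Real-analyticity of $\partial_x\partial_y V_t$ and of each $\partial_y\partial_{z_i}W_t$ in the $y$-variable then upgrades vanishing on an interval to vanishing on all of $\mathbb{R}^2$ (respectively $\mathbb{R}^{d-1}$). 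Hence for every $t>0$ we are in one of two cases: either $V_t(x,y)=P(x)+Q(y)$ and $f_t$ splits as an $x$-factor times a $(y,z)$-factor, so $X_t\perp(Y_t,Z_t)$; or $W_t(y,z)=R(y)+S(z)$ and $f_t$ splits as an $(x,y)$-factor times a $z$-factor, so $(X_t,Y_t)\perp Z_t$. Either form of independence at a single $t>0$ transfers back to $t=0$ by dividing the corresponding characteristic-function identity by the common Gaussian factor $e^{-t(\xi^2+\eta^2+\|\zeta\|_2^2)}$. The main obstacle, as I see it, is the dichotomy itself: the identity only tells us pointwise in $y$ that one of two vanishings holds, and it takes both the Baire argument and analytic continuation (crucially relying on the real-analyticity of $f_t$ supplied by Appendix~A) to promote this pointwise alternative to the global one needed for the conclusion.
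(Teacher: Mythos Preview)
Your proof is correct and follows essentially the same strategy as the paper: exploit the heat equation together with the conditional-independence factorization to derive a product-zero identity in which one factor depends only on $(x,y)$ and the other only on $(y,z)$, then use real-analyticity (along lines, as provided by Appendix~A) to promote the pointwise alternative to a global dichotomy. The paper carries out step one by applying the heat equation separately to the marginal densities $h_t,g_t,m_t$ and arrives at $\bigl(\partial_y(h_t/m_t)\bigr)\bigl(\partial_y(g_t/m_t)\bigr)=0$; your route via $\phi_t=\log f_t$ and the log-heat equation $\partial_t\phi_t=\Delta\phi_t+\|\nabla\phi_t\|_2^2$ yields the equivalent identity $\partial_{xy}V_t\cdot\partial_{yz_i}W_t=0$ with noticeably less algebra, and your characteristic-function remark makes the passage back to $t=0$ explicit where the paper leaves it implicit.
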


\begin{proof}
We parametrize $\mathbb{R}^d$ with triples $(x,y,\underline{z})$ where the coordinates $x$ and $y$ are real numbers and $\underline z$ is a $d-2$-dimensional vector. By $\Delta_z$ we mean the sum of the second partial derivates with respect to the coordinates belonging to $\underline{z}$. We denote by $f_t,h_t,g_t$ and $m_t$ the density functions of  $(X_t,Y_t,Z_t),(X_t,Y_t),(Z_t,Y_t)$ and $Y_t$, respectively.  We also introduce $s_t(\underline z, y)= g_t(\underline z, y)/m_t(y)$. Using the conditional independence and the heat equation we obtain the following equations.
\[f_t(x, y, \underline z)=h_t(x, y)g_t(\underline z,y)/m_t(y)=h_t(x,y)s_t(\underline z, y).\]
\[ \partial_tf_t=\Delta f_t; \qquad \partial_th_t=\Delta h_t; \qquad  \partial_tg_t=\Delta g_t=\partial_{yy}g_t+\Delta_z g_t; \qquad \partial_tm_t=\Delta m_t. \]

By abusing the notation we will omit  $t$ from $f_t,h_t,g_t,m_t$ and $s_t$ in the following calculations.
We start from the first equality, and use the other three one after the other. 
\[\partial_{xx} f+\partial_{yy}f+\Delta_{z}f=\partial_th \cdot s+h\cdot\partial_ts.\]
\[\partial_{xx}h\cdot s+\partial_{yy}f+\Delta_{z}f=\partial_{xx}h\cdot s+\partial_{yy}h\cdot s+h\cdot \partial_ts.\]
\[\partial_{yy}f+h\cdot \Delta_{z}g/m=\partial_{yy}h\cdot s+h\cdot\big((\partial_tg)/m-(g\cdot \partial_tm)/m^2\big). \]
\begin{equation}\label{eq:partial1}\partial_{yy}h\cdot s+2\partial_{y}h\cdot \partial_{y} s+h\cdot\partial_{yy} s= \partial_{yy}h\cdot s+(h\cdot \partial_{yy}g)/m-(gh\cdot\partial_{yy}m)/m^2.\end{equation}
Before continuing this, we calculate the partial derivatives of $s$ with respect to $y$.  
\[\partial_{y}s=(\partial_yg)/m-(g\cdot\partial_ym)/m^2.\]
\[\partial_{yy}s=(\partial_{yy}g)/m-2(\partial_y g\cdot \partial_y m)/m^2-(g\cdot\partial_{yy}m)/m^2+2g\cdot(\partial_y m)^2/m^3.\]
Now we substitute this into equation \eqref{eq:partial1}.
\begin{multline*}2(\partial_yh\cdot\partial_yg)/m-2g(\partial_yh\cdot\partial_ ym)/m^2+(h\cdot \partial_{yy}g)/m\\-2h(\partial_y g\cdot \partial_y m)/m^2-(gh\cdot \partial_{yy}m)/m^2+2hg(\partial_ym)^2/m^3=(h\cdot \partial_{yy}g)/m-(gh\cdot\partial_{yy}m)/m^2.\end{multline*}
\[(\partial_yh\cdot\partial_yg)/m-(g\cdot\partial_y h+h\cdot\partial_y g)\cdot(\partial_ym)/m^2+gh(\partial_ ym)^2/m^3=0.\]
\[(\partial_yh\cdot\partial_yg)/m^2-(g\cdot\partial_y h+h\cdot\partial_y g)\cdot(\partial_ym)/m^3+gh(\partial_ ym)^2/m^4=0.\]
\[\Big((\partial_y h)/m-(h\cdot\partial_y m)/m^2\Big)\Big((\partial_yg)/m-(g\cdot \partial_y m)/m^2\Big)=0.\]
\[\big(\partial_y(h/m)\big)\cdot \big(\partial_y(g/m)\big)=0.\] 
We obtain that at least one of $\partial_y(h/m)=0$ and $\partial_y(g/m)=0$ holds on an open set. Assume that (without loss of generality) $\partial_y(h/m)=0$ holds on an open set $U$ of the domain $D$ of $h/m$ which is $\mathbb{R}^2$. This is equivalent to $m\partial_y h - h\partial_ym=0$ on $U$. Let $p\in U$ and $q\in D$ be arbitrary. Let us define the function $r:\mathbb{R}\to\mathbb{R}$ by $r(\lambda)=(m\partial_yh-h\partial_y m)(p+\lambda(q-p))$. Then $r$ has an analytic extension to $\mathbb{C}$. In addition, $r=0$ in a small neighborhood of $0$ in $\mathbb{R}$. It follows that $r$ is constant $0$ and thus $\partial_y (h/m)$ is $0$ at every $q\in D$. This implies that $X_t$ is independent of $Y_t$. Similarly if $\partial_y(g/m)=0$ holds on an open set we obtain that $Z_t$ is independent of $Y_t$. The conditional independence of $X_t$ and $Z_t$ with respect to $Y_t$ finishes the proof. 
\hfill $\square$

\end{proof}

\section{Appendix B: Differential entropy}

Differential entropy is defined as follows for absolutely continuous random vectors. Some properties of the discrete entropy are preserved (e.g.\ it is additive if we put together independent random variables), others do not hold any more; an essential difference is that differential entropy does not have to be nonnegative.

\begin{definition}Let $(X_1, X_2, \ldots, X_n)$ be a family of random variables. Suppose that their joint distribution is absolutely continuous, and their joint density function is $f$. Then their differential entropy is defined as follows (provided that the integral exists): 
\[\mathbb D(X_1, X_2, \ldots, X_n)=-\int_{\mathbb R^n} f(t_1, \ldots, t_n) \log f(t_1, \ldots, t_n)dt_1\ldots dt_n.\]
\end{definition}

To see the connection between entropy in the discrete case and differential entropy, recall Theorem 9.3.1\ from \cite{cover}. This says that if we divide the range of $X$ into bins of length $\delta$, and $X^{\delta}$ denotes the quantized version of $X$ with respect to this grid, then 
\[\mathbb H(X^{\delta})+\log \delta\rightarrow \mathbb H(X) \]
as $\delta\rightarrow 0$, assuming that the density of $X$ is Riemann integrable. 

The following well-known lemma shows how the differential entropy is modified when we apply a linear transformation to the random vector (see e.g. corollary to Theorem 9.6.4 in \cite{cover}. 

\begin{lemma}\label{difftrdet} Let $\underline X=(X_1, \ldots, X_n)$ be a family of random variables, and let $A\in \mathbb R^{n\times n}$ be an invertible matrix. Then 
\[\mathbb D(A\underline X)=\mathbb D(\underline X)+\log |\!\det (A)|.\]

\end{lemma}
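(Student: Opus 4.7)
The plan is to apply the change of variables formula for densities, then perform the corresponding substitution inside the differential entropy integral. The only real content is the Jacobian of the linear map $A$.

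First I would identify the density of $A\underline X$. If $f$ denotes the joint density of $\underline X$, then since $A$ is invertible the pushforward under $A$ is absolutely continuous with density
\[
g(y)=\frac{f(A^{-1}y)}{|\!\det(A)|}, \qquad y\in\mathbb{R}^n.
\]
This is the standard Jacobian formula and guarantees that $\mathbb{D}(A\underline X)$ is well-defined whenever $\mathbb{D}(\underline X)$ is.

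Next I would plug this into the definition of differential entropy and perform the substitution $y=Ax$, which has Jacobian $|\!\det(A)|$:
\[
\mathbb{D}(A\underline X)=-\int_{\mathbb{R}^n} g(y)\log g(y)\,dy
=-\int_{\mathbb{R}^n}\frac{f(A^{-1}y)}{|\!\det(A)|}\log\!\frac{f(A^{-1}y)}{|\!\det(A)|}\,dy.
\]
Substituting $x=A^{-1}y$ cancels the leading factor of $|\!\det(A)|^{-1}$ with the Jacobian $|\!\det(A)|$, leaving
\[
\mathbb{D}(A\underline X)=-\int_{\mathbb{R}^n}f(x)\bigl(\log f(x)-\log|\!\det(A)|\bigr)\,dx
=\mathbb{D}(\underline X)+\log|\!\det(A)|,
\]
where I used $\int f=1$. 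There is no genuine obstacle here; the only mild subtlety is justifying the interchange of integral and the splitting of the logarithm, but this is immediate from Fubini/linearity once $\mathbb{D}(\underline X)$ is assumed finite (which is the setting in which the lemma is applied in the paper).
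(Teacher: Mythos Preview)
Your proof is correct and is exactly the standard change-of-variables argument. The paper does not actually prove this lemma; it simply cites it as well known (corollary to Theorem~9.6.4 in \cite{cover}), so there is nothing further to compare.
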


The following lemma is equivalent to the fact that the nonnegativity of conditional mutual information holds for differential entropy as well. We include a proof for completeness. 
\begin{lemma}\label{diffincform} Let $X, Y, Z$ be random variables such that their differential entropy exist. Then we have
\[\mathbb D(X, Y, Z)\leq \mathbb D(X, Z)+\mathbb D(Y, Z)-\mathbb D(Z).\]

\end{lemma}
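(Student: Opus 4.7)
The plan is to rewrite the claimed inequality in terms of conditional differential entropies and then reduce it to the nonnegativity of (conditional) Kullback--Leibler divergence. Let $f_{XYZ}$, $f_{XZ}$, $f_{YZ}$, $f_Z$ denote the relevant joint densities, which we may assume exist since all four differential entropies in the statement are postulated to exist. Define the conditional densities $f_{XY|Z}(x,y|z) = f_{XYZ}(x,y,z)/f_Z(z)$, $f_{X|Z}(x|z) = f_{XZ}(x,z)/f_Z(z)$, and $f_{Y|Z}(y|z) = f_{YZ}(y,z)/f_Z(z)$, all well defined on the set $\{z : f_Z(z) > 0\}$.

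The first step is to verify the identities
\begin{equation*}
\mathbb D(X,Y,Z) - \mathbb D(Z) = -\int f_Z(z)\!\int f_{XY|Z}(x,y|z)\log f_{XY|Z}(x,y|z)\, dx\, dy\, dz =: \mathbb D(X,Y\mid Z),
\end{equation*}
and analogously $\mathbb D(X,Z) - \mathbb D(Z) = \mathbb D(X\mid Z)$ and $\mathbb D(Y,Z) - \mathbb D(Z) = \mathbb D(Y\mid Z)$. These follow by substituting $f_{XYZ} = f_{XY|Z}\cdot f_Z$ into the definition of $\mathbb D(X,Y,Z)$ and splitting the logarithm; the integrability conditions needed to justify Fubini are precisely what is encoded in the hypothesis that the entropies exist. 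With these identities, the claim becomes
\begin{equation*}
\mathbb D(X,Y\mid Z) \leq \mathbb D(X\mid Z) + \mathbb D(Y\mid Z).
\end{equation*}

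The second step is to recognize the difference of the two sides as a conditional mutual information. Indeed, for each fixed $z$ with $f_Z(z)>0$,
\begin{equation*}
\mathbb D(X\mid Z=z) + \mathbb D(Y\mid Z=z) - \mathbb D(X,Y\mid Z=z) = \int f_{XY|Z}(x,y|z)\log \frac{f_{XY|Z}(x,y|z)}{f_{X|Z}(x|z)\, f_{Y|Z}(y|z)}\, dx\, dy,
\end{equation*}
which is a Kullback--Leibler divergence between $f_{XY|Z}(\cdot,\cdot|z)$ and the product $f_{X|Z}(\cdot|z)\otimes f_{Y|Z}(\cdot|z)$. Integrating over $z$ against $f_Z(z)$ gives exactly $\mathbb D(X\mid Z) + \mathbb D(Y\mid Z) - \mathbb D(X,Y\mid Z)$.

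The third and final step is to apply nonnegativity of KL divergence, which in turn is the standard consequence of Jensen's inequality applied to the convex function $t\mapsto t\log t$ (or equivalently to $-\log$ via $\log t \leq t-1$). Thus each inner integral is $\geq 0$, and hence so is its $f_Z$-weighted integral, completing the proof. The only subtle point worth flagging is measure-theoretic bookkeeping: one must check that all the conditional densities are jointly measurable and that the formal manipulation of the logarithm does not run into $\infty - \infty$ issues. This is handled by restricting to $\{z : f_Z(z) > 0\}$ (a full-measure set for $Z$) and invoking the assumed finiteness of $\mathbb D(X,Z)$, $\mathbb D(Y,Z)$, $\mathbb D(Z)$ to legitimize the splitting of logarithms, so there is no real obstacle here.
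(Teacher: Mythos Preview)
Your proof is correct and follows essentially the same approach as the paper: condition on $Z=z$, apply nonnegativity of mutual information (equivalently, nonnegativity of the KL divergence between the conditional joint and the product of conditional marginals), and then integrate against $f_Z$. The paper simply cites the nonnegativity of mutual information from Cover--Thomas, whereas you unpack it one layer further to the underlying KL/Jensen argument, but the structure of the two proofs is identical.
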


\begin{proof}
Let $g(x,y)=f(x, y, z)/f(z)$ on the support of $Z$, and 0 otherwise. Then $g$ is a density function on $\mathbb R^2$. As the nonnegativity of mutual information is satisfied for differential entropy (see e.g.\ Corollary to Theorem 9.6.1.\ in \cite{cover}), we have
\[-\int g(x,y) \log g(x,y) dx dy \leq -\int g_1(x)\log g_1(x) dx-\int g_2(y) \log g_2(y) dy, \]
where $g_1$ and $g_2$ are the marginal densities of $g$. Multiplying both sides by $f(z)$ and integrating with respect to $z$ we get the statement of the lemma.
\end{proof} \hfill $\square$

\section{Appendix C: Factor of i.i.d.\ processes}

Let $f:[0,1]^{V_d}\rightarrow Y$ be a measurable function such that it is invariant under root preserving automorphisms. We can use $f$ to construct an invariant process in the following way. First we put independent uniformly random elements from $[0,1]$ on the vertices of $T_d$. Then, at each vertex $v$, we evaluate $f$ for this random labeling such that the root is placed to $v$.  If $f$ depends only on finitely many coordinates, then the corresponding process is called a block factor of i.i.d.\ process. 

\begin{proposition}\label{prop:elso}
If $\{X_v\}_{v\in V_d}$ is a real-valued typical process and $\{Y_v\}_{v\in V_d}$ is a weak limit of factor of i.i.d.\ processes, then their independent sum $\{X_v+Y_v\}_{v\in V_d}$ is a typical process. 
\end{proposition}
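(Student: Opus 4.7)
The plan is to realize an approximation of the independent sum $\mu\star\nu$ (where $\mu,\nu$ denote the laws of $\{X_v\}$ and $\{Y_v\}$) on the very graph sequence that witnesses typicality of $\mu$, by superimposing a block factor of i.i.d.\ approximation of $\nu$ built from fresh, independent labels. The use of block factors is essential: unlike a general typical process, a block factor of i.i.d.\ process can be evaluated on any finite $d$-regular graph whose local geometry approaches that of $T_d$, and so can be grafted onto the $\mu$-witnessing sequence of graphs.

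First, since every factor of i.i.d.\ process is a weak limit of block factor processes (by truncating the coding radius), we may fix block factor of i.i.d.\ processes $\nu^{(k)}\to\nu$ weakly, where $\nu^{(k)}$ is defined by a measurable root-invariant rule $F_k:[0,1]^{B_{r_k}(o)}\to\mathbb{R}$. Using the typicality of $\mu$, we obtain a growing sequence $\{n_i\}$ along which, almost surely, for independent random $d$-regular graphs $G_i$ on $n_i$ vertices, there exist $f_i:V(G_i)\to\mathbb{R}$ with $\mathrm{distr}^*(f_i,G_i)\to\mu$; we may also assume $\omega(G_i)\to 0$ on the same almost-sure event. On an enlarged probability space we attach fresh i.i.d.\ uniform $[0,1]$ labels $\xi_i:V(G_i)\to[0,1]$, independent of everything else, and define $g_i^{(k)}(v)=F_k(\xi_i|_{B_{r_k}(v)})$ whenever $B_{r_k}(v)$ is a tree in $G_i$ (and an arbitrary fixed value otherwise). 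The candidate witness is $h_i^{(k)}=f_i+g_i^{(k)}$.

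The main step is to show that for each fixed $k$, almost surely $\mathrm{distr}^*(h_i^{(k)},G_i)\to\mu\star\nu^{(k)}$ weakly as $i\to\infty$. Testing on the marginal at an arbitrary fixed ball $B_K(o)$, we condition on the random covering $\phi$. With probability tending to $1$ (since $\omega(G_i)\to 0$) the restriction $\phi|_{B_{K+r_k}(o)}$ is injective and its image in $G_i$ is a tree; on this high-probability event the pulled-back labels $\xi_i\circ\phi|_{B_{K+r_k}(o)}$ are i.i.d.\ uniform on $[0,1]$, so $g_i^{(k)}\circ\phi|_{B_K(o)}$ has distribution exactly $\nu^{(k)}|_{B_K(o)}$ and is independent of $\phi|_{B_K(o)}$. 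Since $f_i\circ\phi|_{B_K(o)}$ is a function of $\phi|_{B_K(o)}$ alone, marginalizing out $\phi$ and using $\mathrm{distr}^*(f_i,G_i)|_{B_K(o)}\to\mu|_{B_K(o)}$ yields the independent coupling $\mu|_{B_K(o)}\otimes\nu^{(k)}|_{B_K(o)}$ in the limit; pointwise addition then gives $(\mu\star\nu^{(k)})|_{B_K(o)}$.

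Finally, since convolution with a fixed probability measure is continuous in the weak topology, $\mu\star\nu^{(k)}\to\mu\star\nu$, so a diagonal choice $k_i\to\infty$ slow enough gives $\mathrm{distr}^*(h_i^{(k_i)},G_i)\to\mu\star\nu$; fixing any generic deterministic realization of the $\xi_i$ then produces the desired witnessing sequence of functions, establishing typicality of the independent sum. The main obstacle is the asymptotic-independence claim in the third paragraph: although $f_i\circ\phi$ and $g_i^{(k)}\circ\phi$ share the random covering $\phi$, the freshness of the labels $\xi_i$ combined with the local injectivity of $\phi$ forces the conditional law of $g_i^{(k)}\circ\phi|_{B_K(o)}$ given $\phi|_{B_K(o)}$ to become (asymptotically) the canonical block factor law $\nu^{(k)}|_{B_K(o)}$, thereby decoupling the two components of $h_i^{(k)}\circ\phi$ in the limit.
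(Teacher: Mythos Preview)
Your proof is correct and follows essentially the same route as the paper: reduce from weak limits of factor of i.i.d.\ to block factors, then superimpose the block factor (computed from fresh independent uniform labels) onto the graph sequence witnessing typicality of $\mu$, exploiting the locally tree-like structure of random $d$-regular graphs so that the block factor reproduces $\nu^{(k)}$ locally. Your argument is in fact more detailed than the paper's terse sketch; the one place that would benefit from an extra line is the passage from the annealed convergence you establish (over $\phi$ and $\xi_i$ jointly) to the quenched statement for a generic fixed $\xi_i$, which follows from a routine concentration/LLN argument in the spirit of Lemma~\ref{discapproxl}.
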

\begin{proof}
Notice that the family of typical processes is closed with respect to the weak topology. On the other hand, every process that is a weak limit of factor i.i.d.\ processes is also a weak limit of block factor of i.i.d.\ processes \cite{russ}. Hence it is enough to prove the statement in the case when $\{Y_v\}_{v\in V_d}$ is a block factor of i.i.d.\ process. It is well known that block factor of i.i.d.\ processes can be approximated with the corresponding local algorithm computed on graphs with sufficiently large (essential) girth. Since large random $d$-regular graphs have large essential girth, the independent sum of the approximation of $\{X_v\}_{v\in V_d}$ and the local algorithm approximating  $\{Y_v\}_{v\in V_d}$ is an approximation of the process $\{X_v+Y_v\}_{v\in V_d}$.
\hfill $\square$
\end{proof}

The next proposition was proved by Harangi and Vir\'ag in \cite{harangi}. 
\begin{proposition}\label{prop:hv}
For $|\lambda|\leq 2\sqrt{d-1}$  the unique Gaussian wave $\Psi_\lambda$ is a weak limit of factor of i.i.d processes (but not a factor of i.i.d.\ process itself). 
\end{proposition}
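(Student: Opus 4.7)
The plan is to approximate $\Psi_\lambda$ by explicit Gaussian factor of i.i.d.\ processes obtained by applying polynomials of the adjacency operator $A$ of $T_d$ to an i.i.d.\ Gaussian field. Let $\{Z_v\}_{v\in V_d}$ be i.i.d.\ standard normals and, for a real polynomial $p$ of degree $r$, define $Y_v^{(p)} = (p(A)Z)_v = \sum_w p(A)_{v,w}\,Z_w$. Since $p(A)_{v,w}$ vanishes for $d(v,w)>r$, this is a block factor of i.i.d.\ process of radius $r$; it is Gaussian (as a linear combination of Gaussians) and $\mathrm{Aut}(T_d)$-invariant by construction.

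The covariance admits the spectral representation
\[ \mathbb{E}(Y_v^{(p)} Y_u^{(p)}) = (p(A)^2)_{v,u} = \int_{-2\sqrt{d-1}}^{2\sqrt{d-1}} p(x)^2\, \psi_{d(v,u)}(x)\, d\rho(x), \]
where $\rho$ is the Kesten--McKay spectral measure of $A$ on $T_d$, supported precisely on $[-2\sqrt{d-1}, 2\sqrt{d-1}]$, and $\psi_k(x)$ is the radial spherical function, which up to the change of variable $x = 2\sqrt{d-1}\,y$ equals $f(k,y)$ from \eqref{eq:fkx}; in particular, $\psi_k(\lambda)$ is the covariance at distance $k$ of the Gaussian wave $\Psi_\lambda$. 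I would then choose polynomials $p_n$ so that $p_n^2 d\rho / \int p_n^2\, d\rho \to \delta_\lambda$ weakly on $[-2\sqrt{d-1}, 2\sqrt{d-1}]$; this is possible precisely for $\lambda$ in the support of $\rho$, and can be arranged by truncating the orthogonal polynomial expansion of a narrow bump at $\lambda$ and rescaling. After normalizing $Y^{(p_n)}$ to have unit variance at the root, the covariance at distance $k$ tends to $\psi_k(\lambda)$ for every $k$. Since each $Y^{(p_n)}$ is Gaussian and invariant, and an invariant Gaussian process is determined by its covariance, the limit equals $\Psi_\lambda$ in the weak topology.

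For the secondary claim that $\Psi_\lambda$ is not itself a factor of i.i.d.\ process, the plan is to extract a quantitative spectral or correlation-decay obstruction. Factor of i.i.d.\ processes $\{Y_v\}$ admit, via their generating measurable function, variance bounds on partial sums over balls which, after spherical Fourier analysis on $T_d$, translate into summability conditions on the covariance sequence $\psi_k(\lambda)$ against the sphere weights $d(d-1)^{k-1}$. The Chebyshev-polynomial asymptotics of $\psi_k(\lambda)$, of order $k^{-1/2}(d-1)^{-k/2}$ times an oscillatory factor, are in conflict with these conditions throughout $|\lambda|\leq 2\sqrt{d-1}$, and this conflict yields the obstruction.

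The main obstacle I expect is this non-existence claim. The weak-limit construction is a fairly routine computation once the spectral identity above is in place and one checks that polynomial bumps concentrating at $\lambda$ exist on the support of $\rho$. The harder task is genuinely separating factors of i.i.d.\ from their weak closure, which requires a quantitative tool (variance inequality, entropy method, or Gamarnik--Sudan-type bound) that distinguishes the spectrally concentrated $\Psi_\lambda$ from processes realizable by an actual measurable factor map of an i.i.d.\ field.
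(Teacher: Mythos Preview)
The paper does not prove this proposition at all; it simply attributes the result to Harangi and Vir\'ag \cite{harangi}. So there is no in-paper argument to compare against, and your task was really to reconstruct (or re-prove) the cited result.

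Your construction of the weak-limit part is essentially the Harangi--Vir\'ag argument and is correct: applying polynomials of the adjacency operator to an i.i.d.\ Gaussian field gives block factors of i.i.d.\ that are Gaussian and invariant, and choosing the polynomials so that $p_n(x)^2\,d\rho(x)$ (normalized) concentrates at $\lambda$ forces the covariances to converge to those of $\Psi_\lambda$. The only point to watch is the endpoint case $|\lambda|=2\sqrt{d-1}$, where the Kesten--McKay density vanishes; one still gets concentration (e.g.\ via $p_n(x)=(x/2\sqrt{d-1})^n$), so this is fine.

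The second part, however, is where your proposal has a genuine gap. First, the asymptotics you quote are off: from \eqref{eq:fkx}--\eqref{eq:qkx} one has $f(k,x)=q_k(x)/\sqrt{d(d-1)^{k-1}}$ with $q_k(\cos\vartheta)$ bounded (no $k^{-1/2}$ decay) for $\vartheta\in(0,\pi)$, so the covariance of $\Psi_\lambda$ at distance $k$ is of exact order $(d-1)^{-k/2}$. This is precisely the borderline rate that the correlation-decay bound for factors of i.i.d.\ \cite{cordec} allows, so a crude decay comparison does \emph{not} separate $\Psi_\lambda$ from factor of i.i.d.\ processes. The actual obstruction is sharper and spectral: any $L^2$ factor of i.i.d.\ process has spectral measure absolutely continuous with respect to the Kesten--McKay measure (equivalently, its covariances $c_k$ satisfy $\sum_k d(d-1)^{k-1}c_k^2<\infty$; see \cite{balint}), whereas an eigenvector process has spectral measure $\delta_\lambda$, and correspondingly $\sum_k d(d-1)^{k-1}f(k,x)^2=\sum_k q_k(x)^2=\infty$. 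Your sketch gestures toward a ``summability condition'' of this type but does not identify it, and the stated asymptotics would not by themselves yield the conclusion. Replacing your variance/Gamarnik--Sudan plan with the absolute-continuity-of-spectral-measure argument makes the second claim immediate.
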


\subsection*{Acknowledgement.} The research leading to these results has received funding from the European Research Council under the European Union's Seventh Framework Programme (FP7/2007-2013) / ERC grant agreement n$^{\circ}$617747. The research was partially supported by the MTA R\'enyi Institute Lend\"ulet Limits of Structures Research Group.

\textsc{\'Agnes Backhausz.} Eotvos Lor\'and University, Budapest, Hungary and MTA Alfr\'ed R\'enyi Institute of Mathematics. \texttt{agnes@math.elte.hu}

\textsc{Bal\'azs Szegedy.} MTA Alfr\'ed R\'enyi Institute of Mathematics, Budapest, Hungary. \\ \texttt{szegedy.balazs@renyi.mta.hu}

\end{document}